\documentclass[11pt]{article}

\usepackage[a4paper,margin=1in]{geometry}
\usepackage{amsmath,amssymb,amsthm,mathtools}
\usepackage{booktabs,array,longtable}
\usepackage{enumitem}
\usepackage{microtype}
\usepackage{xcolor}
\usepackage{authblk}
\usepackage{hyperref}
\usepackage[nameinlink,capitalise,noabbrev]{cleveref}

\hypersetup{
  colorlinks=true,
  linkcolor=blue!55!black,
  citecolor=blue!55!black,
  urlcolor=blue!55!black
}

\newtheorem{theorem}{Theorem}[section]
\newtheorem{proposition}[theorem]{Proposition}
\newtheorem{lemma}[theorem]{Lemma}
\newtheorem{corollary}[theorem]{Corollary}
\theoremstyle{remark}
\newtheorem{remark}[theorem]{Remark}

\usepackage{etoolbox}
\AtBeginEnvironment{proposition}{\crefalias{theorem}{proposition}}
\AtBeginEnvironment{lemma}{\crefalias{theorem}{lemma}}
\AtBeginEnvironment{corollary}{\crefalias{theorem}{corollary}}
\AtBeginEnvironment{remark}{\crefalias{theorem}{remark}}

\numberwithin{equation}{section}

\newcommand{\one}{\mathbf 1}
\newcommand{\R}{\mathbb R}
\newcommand{\ip}[2]{\left\langle #1,#2\right\rangle}
\newcommand{\norm}[1]{\left\lVert #1\right\rVert}
\newcommand{\Tr}{\operatorname{Tr}}

\newcommand{\dd}{\,\mathrm d}
\newcommand{\E}{\mathbb E}
\newcommand{\GammaLaw}{\operatorname{Gamma}}
\newcommand{\BetaLaw}{\operatorname{Beta}}
\newcommand{\supp}{\operatorname{supp}}

\newcommand{\dens}[2]{t(#1,#2)}
\newcommand{\tC}[2]{t(C_{#1},#2)}
\newcommand{\tP}[2]{t(P_{#1},#2)}
\newcommand{\edens}[1]{t(K_{2},#1)}

\newcommand{\goodman}[2]{#1^{#2}-#1(1-#1)^{#2-1}}
\newcommand{\goal}{\goodman{p}{m}}

\newcommand{\Tker}[1]{T_{#1}}
\newcommand{\dW}{d_W}

\newcommand{\intsq}{\int_{[0,1]^2}}
\newcommand{\intcube}[1]{\int_{[0,1]^{#1}}}
\newcommand{\ddxy}{\dd x\dd y}

\newcommand{\rep}[2]{#1^{\times #2}}

\title{Goodman-type lower bounds for odd cycles I: The dense regime}
\author[1]{Seonghyuk~Im}
\author[2,3]{Taeyoung~Kim}
\author[2,3]{Hongseok~Yang}
\affil[1]{Center for AI and Natural Science, Korea Institute for Advanced Study (KIAS), Korea}
\affil[2]{School of Computing, KAIST, Korea}
\affil[3]{School of Computational Sciences, Korea Institute for Advanced Study (KIAS), Korea}
\affil[ ]{\newline\texttt{seonghyuk@kias.re.kr}, \texttt{taeyoungkim21@kaist.ac.kr}, \texttt{hongseokyang@kias.re.kr}}
\date{}

\begin{document}
\maketitle

\begin{abstract}
  Goodman proved a lower bound on triangle counts which, in graphon language, states that every graphon of edge density $p$ has triangle density at least $2p^2-p$.
  Bennett, Dudek, Lidick\'{y}, and Pikhurko established the corresponding bound for $C_5$, determining its minimum density at every Tur\'an density $p=1-1/k$ with an integer $k\ge3$.
  We extend this bound to every odd cycle in the dense regime: for every odd integer $m\ge3$ and every graphon $W$ with edge density $p\ge2/3$,
  \[
    \tC mW\ge p^m-p(1-p)^{m-1}.
  \]
  For $2/3\le p<1$, equality holds precisely for balanced complete $k$-partite graphons with an integer $k\ge3$, up to graphon equivalence, and we establish qualitative stability at these densities.
  In the same density range, we also strengthen Kim and Lee's density-sensitive commonality inequality for odd cycles by adding the nonnegative term $t(P_{m-1},1-W)-(1-p)^{m-1}$ to the lower bound, where $P_{m-1}$ is the path with $m-1$ edges. To prove the main results, we develop a spectral excursion expansion and use Dirichlet averaging to reduce the resulting polynomial inequalities in arbitrarily many spectral variables to a one-variable problem.
  We prove the one-variable inequality through a uniform inequality comparing odd and even moments of shifted Gamma random variables.
  We also formalise the odd-cycle lower-bound theorem in Lean~4.
\end{abstract}

\section{Introduction}\label{sec:intro}

Determining the (asymptotic) minimum density of a fixed graph $H$ at a prescribed edge density is a classical problem in extremal graph theory.
Here subgraph densities are normalised by counting labelled copies of $H$ and dividing by $n^{|V(H)|}$, where $n$ is the number of vertices of the host graph.
Typical examples are Tur\'an-type problems, which ask how dense a graph can be while the $H$-density remains zero.
The Erd\H os--Stone--Simonovits theorem~\cite{erdos1966limit,erdos1946structure} answers this question asymptotically: if $H$ has chromatic number $\chi(H)=r+1$, then every $H$-free graph has edge density at most $1-1/r+o(1)$.

The supersaturation theorem of Erd\H os and Simonovits~\cite{erdos1983supersaturated} further shows that exceeding the threshold $1-1/r$ by a fixed amount forces the $H$-density to be bounded away from zero.
More quantitatively, Sidorenko~\cite{sidorenko1991inequalities} conjectured that for a bipartite graph $H$, the minimum $H$-density among graphs of edge density $p$ is asymptotically attained by the random graph with edge density $p$. 
This conjecture has been proved for several classes of bipartite graphs~\cite{conlon2010approximate,conlon2018some,conlon2017finite,conlon2021sidorenko,hatami2010graph,im2026sidorenko,kim2016two,li2025logarithmic,sidorenko1993correlation,szegedy2014information}, and very recently, OpenAI announced a counterexample~\cite{openai2026counterexample}. However, it is still far from completely understood which bipartite graphs satisfy Sidorenko's conjecture.

For complete graphs, the corresponding question is the classical Erd\H os--Rademacher, or clique-density, problem~\cite{erdos1962number}.
Goodman~\cite{goodman1959sets} proved a simple lower bound on triangle counts, which implies that the asymptotic triangle density at edge density $p$ is at least $2p^2-p$.
At each density $p=1-1/k$ with an integer $k\ge2$, this bound is asymptotically attained by balanced complete $k$-partite graphs.
Between these densities, the exact asymptotic minimum follows a more intricate, piecewise-defined curve, which was determined by Razborov~\cite{razborov2008minimal} using flag algebras.
Nikiforov~\cite{nikiforov2010number} treated $K_4$, and Reiher~\cite{reiher2016clique} proved the clique density theorem for every complete graph, establishing the asymptotic minimum conjectured by Lov\'asz and Simonovits~\cite{lovasz1983number}.
The asymptotic structure of extremal and almost extremal graphs was characterised by Pikhurko and Razborov~\cite{pikhurko2017asymptotic} for triangles and by Kim, Liu, Pikhurko, and Sharifzadeh~\cite{kim2020asymptotic} for general cliques.

The case of odd cycles is less well understood than that of cliques.
By the Erd\H os--Stone--Simonovits theorem, every odd cycle has Tur\'an density $1/2$, so the asymptotic minimum odd-cycle density is zero when the edge density $p$ is at most $1/2$.
For $p>1/2$, Bennett, Dudek, Lidick\'{y}, and Pikhurko~\cite{bennett2020minimizing} determined the asymptotic minimum $C_5$-density at every density $p=1-1/k$ with an integer $k\ge3$: it equals $p^5-p(1-p)^4$, with balanced complete $k$-partite graphs attaining this value asymptotically.

Our aim is a Goodman-type polynomial lower bound that holds uniformly for all odd cycle lengths and is sharp at the same multipartite densities.
We formulate our results using graphons, which encode limits of dense graphs and express asymptotic edge and subgraph densities as analytic quantities~\cite{lovasz2012large,lovasz2006limits}.
A \emph{graphon} is a symmetric measurable function $W\colon[0,1]^2\to[0,1]$.
For a finite simple graph $H$ with vertex set $\{1,\ldots,v\}$ and edge set $E(H)$, its \emph{homomorphism density} in $W$ is
\[
  \dens HW=\intcube{v}\prod_{ij\in E(H)}W(x_i,x_j)\dd x_1\cdots\dd x_v,
\]
and the \emph{edge density} of $W$ is $\edens W=\intsq W(x,y)\ddxy$.
These densities are the limits of the normalised labelled copy counts along convergent sequences of finite graphs.
Consequently, asymptotic lower bounds on subgraph densities in finite graphs become exact inequalities for graphons.

For an integer $k\ge2$, let $W_k$ denote the balanced complete $k$-partite graphon: it has $k$ parts of equal measure, takes the value $1$ between distinct parts, and takes the value $0$ within each part.
Its edge density is $p=1-1/k$, and for every odd $m\ge3$, the density of the $m$-cycle $C_m$ in $W_k$ is given by
\begin{equation}\label{eq:intro-kpartite-value}
  \tC m{W_k}=\goal.
\end{equation}
This suggests a common form for the triangle and five-cycle bounds.
Our main result extends this bound to every odd cycle in the dense regime $p\ge2/3$.
\begin{theorem}[Odd-cycle Goodman bound for $p\ge2/3$]\label{thm:main}
  Let $m\ge3$ be odd, and let $W$ be a graphon with $\edens W=p\ge2/3$.
  Then,
  \begin{equation}\label{eq:intro-main}
    \tC mW\ge\goal.
  \end{equation}
  Equality holds if and only if $W$ is equal to $W_k$ up to relabelling for some integer $k\ge3$, or $W\equiv1$.
\end{theorem}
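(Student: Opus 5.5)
We work spectrally. Let $T_W$ be the Hilbert--Schmidt operator of $W$ with real eigenvalues $\lambda_1\ge|\lambda_i|$. Then $\tC mW=\sum_i\lambda_i^m$, $\sum_i\lambda_i^2=t(K_2\text{-squared})=\norm{W}_2^2\le p$, and $p=\ip{\one}{T_W\one}=\sum_i\lambda_i c_i^2$, where $c_i=\ip{\one}{\phi_i}$ and $\sum c_i^2=1$.

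The heuristic behind the target comes from the complement $U=1-W$. Here $\ip{\one}{T_U\one}=1-p$, and $T_W=J-T_U$, where $J$ is the rank-one projection onto constants. We expand $\tC mW=\Tr (J-T_U)^m$ by grouping terms according to their maximal runs ("excursions") of $T_U$ between consecutive $J$ factors. The term with no $J$ gives $-\Tr T_U^m$. Since $J$ is a projection, every other term is a product of quantities $\ip{\one}{T_U^{\ell}\one}=t(P_\ell,U)$, carrying sign $(-1)^{\sum\ell}$.

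This gives an exact identity
\[
\tC mW=\sum_{\text{compositions}}(-1)^{\cdot}\prod_j t(P_{\ell_j},U)\;-\;\Tr T_U^m .
\]
The target $p^m-p(1-p)^{m-1}$ is exactly what this identity evaluates to when $U$ is the disjoint union of cliques $W_k$-complement. In that case $T_U=(1-p)\cdot$(projection), so $t(P_\ell,U)=(1-p)^\ell$ and $\Tr T_U^m=k(1/k)^m$.

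The plan is to write $\sum_i\mu_i^\ell b_i^2$ for $t(P_\ell,U)$, where $\mu_i$ are the eigenvalues of $T_U$ and $b_i=\ip{\one}{\psi_i}$, and to bound $\Tr T_U^m\le\sum_i\mu_i^m$ via the spectrum. The inequality then becomes a polynomial inequality in the variables $(\mu_i,b_i^2)$, subject to $\sum b_i^2=1$, $\sum\mu_ib_i^2=1-p$, $\sum\mu_i^2\le 1-p$ (because $U\le1$ gives $\norm U_2^2\le t(K_2,U)$), and $|\mu_i|\le 1-p$ only in the spectral-radius sense.

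The key reduction averages over the weights $b_i^2$. We treat them as a Dirichlet-distributed point on the simplex, which turns the many-variable symmetric inequality into one for a single "typical" eigenvalue. The moment sums $\sum\mu_i^\ell b_i^2$ become moments of a random variable, for which Gamma/Beta representations of Dirichlet marginals are natural. We then aim to reduce to a one-variable inequality comparing odd moments $\E(X-c)^{m}$ with even moments $\E(X-c)^{m-1}$ of a shifted Gamma variable $X$, uniformly in $m$. The condition $p\ge2/3$, i.e.\ $1-p\le 1/3$, should enter exactly here: it controls the shift $c$ relative to the Gamma scale.

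We expect this one-variable moment comparison, uniform over all odd $m$, to be the main obstacle. The first attempt would be induction on $m$ via the Gamma recursion $\E X^{r+1}=(\alpha+r)\theta\,\E X^r$, combined with convexity in the shape parameter.

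For equality, we trace the inequalities. Equality in the one-variable step forces all nonzero $\mu_i$ with $b_i\neq0$ to equal $1-p$. Equality in $\Tr T_U^m\le\dots$ together with $\norm U_2^2=t(K_2,U)$ forces $U$ to be $\{0,1\}$-valued with $T_U=(1-p)P$ for a projection $P$. This makes $U$ a disjoint union of $k=1/(1-p)$ equal-measure cliques, i.e.\ $W=W_k$ up to relabelling, or else $U=0$, i.e.\ $W\equiv1$.
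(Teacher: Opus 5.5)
There is a genuine gap. Your inclusion--exclusion identity $\tC mW=\sum\pm\prod_j t(P_{\ell_j},U)-\Tr T_U^m$ is correct, but the proposal has no valid way to control $\Tr T_U^m=\sum_i\mu_i^m$, and this is where the whole difficulty lies. That sum runs over the entire spectrum of $T_U$, including eigenvalues with $b_i=0$, which the path densities never see. So only your side constraints can bound it.

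The constraint $|\mu_i|\le 1-p$ holds only when $U$ is $q$-regular, where $q=1-p$. Since $T_U$ is self-adjoint, $\max_i|\mu_i|=\norm{T_U}\ge\norm{T_U\one}=\tP2U^{1/2}$, and $\tP2U>q^2$ whenever the degree function of $U$ is not a.e.\ constant.

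If you drop that constraint, the reduced inequality is false. Take $b_1=1$, $\mu_1=q$, $b_2=0$, $\mu_2=\sqrt{pq}$. This satisfies $\sum b_i^2=1$, $\sum\mu_ib_i^2=q$ and $\sum\mu_i^2\le q$. Yet your identity then evaluates to $p^m-(pq)^{m/2}$, which is less than $p^m-pq^{m-1}$ for every $p>1/2$.

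In the regular case your chain does work: the Schur test gives $\norm{T_U}\le q$, and then $\sum_{i\ge2}\mu_i^m\le q^{m-2}(q-q^2)$. This is essentially the paper's equality argument. The irregular case, however, needs a non-spectral input, which the proposal does not supply.

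The paper supplies it by proving the stronger two-colour inequality. Relative to constants $\oplus$ zero-average functions, $T_U$ and $T_W=J-T_U$ have block entries $(q,g,A)$ and $(p,-g,-A)$, with $g$ the centred degree vector. For odd $m$, the pure-$A$ contributions cancel exactly in $\tC mW+\tC mU$. Afterwards, $\tC mU$ is bounded by the entrywise comparison $\tC mU\le\tP{m-1}U$, which comes from $0\le U\le1$. What remains is an integral of explicit polynomials $P_{m,r}$ against products of the spectral measure $\mu$ of $A$ at $g$. This measure is supported on $[-1/2,1/2]$ by another entrywise argument.

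Two further steps need repair.
\begin{itemize}
\item The $b_i^2$ are fixed by $W$, so treating them as a random point of the simplex yields only an averaged statement. The paper's Dirichlet averaging instead uses auxiliary weights $\Theta$ in the identity $h_j(\lambda_1,\ldots,\lambda_r)=\binom{j+r-1}{r-1}\E\bigl[(\sum_i\Theta_i\lambda_i)^j\bigr]$, which reduces $P_{m,r}$ to its diagonal.
\item The one-variable comparison is left unformulated. What is needed is $3jb\,\E(Y-b)^{2j-1}\le(r+j)\,\E(Y-b)^{2j}$ for all $j\ge1$, $b\ge0$ and $Y\sim\GammaLaw(r,1)$. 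The paper proves this from the ODE satisfied by $b\mapsto\E(Y-b)^{2j}$, not by induction on $m$.
\end{itemize}

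For the equality case, regularity must first be extracted from the gap, before your Schur-test and clique argument can be applied. The paper does this with the degree-variance bound, which comes from the constant excursion polynomial $P_{m,(m-1)/2}=m-1$.
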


At each density $p=1-1/k$, $k\ge3$, this uniqueness also yields a stability result in cut distance (\Cref{cor:stability}).

A companion paper proves the odd-cycle Goodman bound \eqref{eq:intro-main} in the remaining nontrivial range $1/2<p<2/3$ using different methods.
For $p\le1/2$, the bound is immediate because $p^m-p(1-p)^{m-1}$ is nonpositive.

\subsection{Extension of commonality}
Our proof of \Cref{thm:main} passes through a stronger inequality involving $W$ and its complement $1-W$.
For a graph $H$, we say that $H$ is \emph{common} if the random red-blue edge colouring of $E(K_n)$ asymptotically minimises the number of monochromatic copies of $H$.
In the graphon setting, this notion translates to the inequality
\[
  \dens HW+\dens H{1-W}\ge2^{1-e(H)}
\]
for every graphon $W$.
The notion of common graphs has been extensively studied in extremal combinatorics. 
Goodman~\cite{goodman1959sets} proved that the triangle is common.
A famous conjecture by Erd\H{o}s~\cite{erdos1962number} states that all complete graphs are common, and Burr and Rosta~\cite{burr1980ramsey} strengthened it by conjecturing that all graphs are common.
This was later disproved by Thomason~\cite{thomason1989disproof}, who showed that $K_4$ is not common.
Many other graphs have been studied to determine whether or not they are common~\cite{grzesik2022tripartite,hatami2012non,jagger1996multiplicities,ko2023common,kral2025common,lee2026disconnected,sidorenko1989cycles}, but the complete classification remains open.

Kim and Lee~\cite{kim2024extended} established the following density-sensitive strengthening for every path or cycle $H$ and every graphon\footnote{Their result holds more generally for every kernel, that is, every bounded symmetric measurable function $W\colon[0,1]^2\to\mathbb{R}$.} $W$ with $\edens W=p$:
\begin{equation}\label{eq:intro-extended-commonality}
  \dens HW+\dens H{1-W}\ge p^{e(H)}+(1-p)^{e(H)}.
\end{equation}
This inequality proves the conjecture that every odd cycle is strongly common, posed by Behague, Morrison, and Noel in the first arXiv version of their paper~\cite[Conjecture~9.7]{behague2022commonv1}.
Since $p^{e(H)}+(1-p)^{e(H)}\ge2^{1-e(H)}$, \eqref{eq:intro-extended-commonality} implies commonality.

For odd cycles with $p\ge2/3$, our second main theorem strengthens \eqref{eq:intro-extended-commonality} by a path-density term.
A graphon $W$ is called $p$-regular if $\int_0^1 W(x,y)\dd y=p$ for almost every $x$.

\begin{theorem}\label{thm:commonality}
  Let $m\ge3$ be odd, and let $W$ be a graphon with $\edens W=p\ge2/3$.
  Then, with $P_{m-1}$ the path with $m-1$ edges,
  \begin{equation}\label{eq:intro-commonality}
    \tC mW+\tC m{1-W}\ge p^m+(1-p)^m+\bigl[\tP{m-1}{1-W}-(1-p)^{m-1}\bigr].
  \end{equation}
  Furthermore, equality holds if and only if $W$ is $p$-regular.
\end{theorem}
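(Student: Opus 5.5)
We prove \Cref{thm:commonality} by an operator expansion around the constant graphon. Write $q=1-p$, let $P$ be the rank-one projection onto the constant function $\one$, and let $U=W-p$, viewed as a kernel operator $\Tker U$. Then
\[
  \Tker W=pP+\Tker U,\qquad \Tker{1-W}=qP-\Tker U,
\]
and $\ip{\one}{\Tker U\one}=0$ because $\edens W=p$. The two cycle densities are $\Tr\bigl((pP+\Tker U)^m\bigr)$ and $\Tr\bigl((qP-\Tker U)^m\bigr)$. The path term is $\ip{\one}{(qP-\Tker U)^{m-1}\one}$.

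\textbf{Step 1: excursion expansion.} Expand each trace as a sum over words in $P$ and $\Tker U$. Since $m$ is odd, the pure $\Tker U$ words give $\Tr(\Tker U^m)-\Tr(\Tker U^m)=0$ and cancel between the two sides. Because $P$ has rank one, each word containing $j\ge1$ copies of $P$ factorises cyclically as a product of excursion moments $a_{k_1}\cdots a_{k_j}$, where
\[
  a_k=\ip{\one}{\Tker U^k\one},\qquad k_1+\cdots+k_j=m-j.
\]
The weight of such a word is $p^j$ on the $W$ side and $(-1)^{m-j}q^j$ on the $1-W$ side. We have $a_0=1$ and $a_1=0$, and every even moment satisfies $a_{2r}=\norm{\Tker U^r\one}^2\ge0$. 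The path term expands in the same way, as a sum over linear rather than cyclic words. The word $P^m$ produces $p^m+q^m$, and the word $qP\cdot q^{m-2}\cdot qP$, with only $P$ letters, produces $q^{m-1}$ in the path term. Hence
\[
  \mathrm{LHS}-\mathrm{RHS}=F(a_2,a_3,\ldots,a_{m-1}),
\]
where $F$ is an explicit polynomial with no constant term. Resolvent generating functions organise the combinatorics: with $g(z)=\ip{\one}{(z-\Tker U)^{-1}\one}$,
\[
  \Tr\bigl((z-pP-\Tker U)^{-1}\bigr)-\Tr\bigl((z-\Tker U)^{-1}\bigr)=\frac{p\,g'(z)}{1-p\,g(z)},
\]
and there is an analogous identity for $qP-\Tker U$.

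\textbf{Step 2: the equality case.} If $W$ is $p$-regular, then $\Tker U\one=0$, so every $a_k$ with $k\ge1$ vanishes. Then $F=0$ and equality holds. The converse will follow from strictness in Step 4: if $a_2=\norm{\Tker U\one}^2>0$, the inequality will be shown to be strict.

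\textbf{Step 3: spectral reduction.} Write $\Tker U=\sum_i\lambda_i\varphi_i\varphi_i^{\top}$ and set $w_i=\ip{\one}{\varphi_i}^2$. Then $a_k=\sum_i w_i\lambda_i^k$, with the constraints
\[
  \sum_i w_i\le 1,\qquad \sum_i w_i\lambda_i=0,\qquad -p\le\lambda_i\le q
\]
(up to the usual operator-norm bounds coming from $0\le W\le1$). So $F$ becomes a polynomial in arbitrarily many spectral variables $(\lambda_i,w_i)$. To reduce it, I will average over the weight simplex against a Dirichlet distribution, equivalently represent the weights as normalised independent $\GammaLaw$ variables. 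This should reduce the positivity of $F$ to a one-variable inequality in a single shifted Gamma variable, with the mean-zero constraint encoded by the shift. The inequality needed compares odd and even moments of that variable: the odd moments carry the signs $(-1)^{m-j}$ weighted by $q^j$, and these must be dominated by the nonnegative even-moment terms, uniformly in $m$.

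\textbf{Step 4 (main obstacle).} The crux is proving this uniform odd-versus-even moment inequality for shifted Gamma variables, and checking that the Dirichlet averaging loses nothing. The hypothesis $p\ge2/3$ should enter here, through $q\le1/3$ controlling the ratio $q/p$ in the odd terms. I would first try to show that, for each fixed number of $P$-letters, the cyclic words dominate the corresponding linear (path) words termwise. Where that fails, I would bound the odd moments by Cauchy--Schwarz between neighbouring even moments, obtaining strictness whenever $a_2>0$, and verify the resulting scalar inequality by an explicit Gamma moment computation.
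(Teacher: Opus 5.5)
Your outline has the right ingredients: excursions, Dirichlet averaging, and an odd-versus-even moment inequality for a shifted Gamma variable, with $q\le1/3$ entering there. Your Step~1 is correct. The additive split $\Tker W=pP+\Tker{W-p}$ gives a valid word expansion; for $m=5$ it yields $F=(12q^2-15q+5)a_2+(5-8q)a_3+4a_4$, which is the paper's gap. Your argument that $p$-regular graphons give equality is also fine. But the proof of $F\ge0$, which is the whole content of the theorem, is missing, and the mechanisms you sketch would not supply it.

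In your coordinates the $a_k$ are moments of the spectral measure of $\Tker{W-p}$ at $\one$. This is a probability measure with mean zero, and $F$ mixes products of different numbers of these moments. You give no way of writing $F$ as an integral of polynomials that are nonnegative pointwise. The paper gets this by sandwiching with the projections $P$ and $Q=I-P$ rather than splitting additively. Each $Q$-block then collapses to $g^{\mathsf T}A^\beta g$, where $A$ is the compression of $\Tker{1-W}$ to mean-zero functions and $g$ is the degree-deviation vector. The gap becomes $\sum_r\int P_{m,r}(q;\lambda_1,\ldots,\lambda_r)\prod_i\dd\mu(\lambda_i)$ for a measure $\mu$ of unconstrained mass $\norm{g}^2$, and each $P_{m,r}$ can be handled separately. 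In your variables these pieces are entangled, e.g.\ $a_3=-g^{\mathsf T}Ag$ and $a_4=\norm{g}^4+g^{\mathsf T}A^2g$. The constraint $a_1=0$ would have to be exploited by an argument you do not give.

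Two side errors. Your constraint $\lambda_i\le q$ is false: for the disjoint union of cliques of measures $1/10$ and $9/10$ one has $p=0.82$, yet $\Tker{W-p}$ has an eigenvalue $\approx0.346>q$. Only $|\lambda_i|\le\norm{W-p}_2\le\sqrt{pq}$ holds. Also, the resolvent identity has the wrong sign; it should read $-pg'(z)/(1-pg(z))$.

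As written, Step~3 averages over the spectral weights $w_i$, and this is not a reduction: the $w_i$ are fixed by $W$, so an average over them says nothing about a given $W$. In the paper the Dirichlet variables are auxiliary. They enter through $h_j(x_1,\ldots,x_k)=\binom{j+k-1}{k-1}\E[(\sum_i\Theta_ix_i)^j]$, which gives $P_{m,r}(q;\lambda_1,\ldots,\lambda_r)=\E[\widetilde P_{m,r}(q,\sum_i\Theta_i\lambda_i)]$. The diagonal polynomial is then handled in three stages:
\begin{itemize}
\item a Beta representation, which settles $\lambda\le0$ pointwise;
\item the Laplace identity $(\lambda+s)^{-m}=\Gamma(m)^{-1}\int_0^\infty t^{m-1}e^{-t(\lambda+s)}\dd t$;
\item the expansion of $\rho_{n,m}$ about $u=1/2$.
\end{itemize}
This is what produces moments of $zY-1$ with $Y\sim\GammaLaw(r,1)$. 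The shift $1/z\ge0$ is a free parameter and has nothing to do with $a_1=0$.

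The inequality that arises, $3j\,\E(zY-1)^{2j-1}\le(r+j)\,\E(zY-1)^{2j}$, cannot be obtained from Cauchy--Schwarz between neighbouring even moments. Take $j=1$ and $1/z$ slightly below $r$, where the odd moment is positive; that route would then need $3\sqrt r\le r+1$, which fails at $r=2$. The paper instead uses the differential equation $bF''+(b-r-2j+1)F'-2jF=0$ for $F(b)=\E(Y-b)^{2j}$. This shows that $H=(r+j)F+\tfrac{3b}{2}F'$ can be negative somewhere only if $H(b_*)<0$, where $b_*=(r+j)(5r+8j)/(3(r+4j))$. It then integrates by parts and compares the two local maxima of $x^{2j}(x+b_*)^re^{-x}$, which reduces to the scalar bound $L(t)<0$.

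Finally, your converse for the equality case rests on this unproved strictness. In the paper it comes from $P_{m,(m-1)/2}\equiv m-1$, which gives $\Psi_m(W)\ge(m-1)\norm{g}^{m-1}$, that is, $(m-1)$ times the $\tfrac{m-1}{2}$-th power of the degree variance.
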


The Blakley--Roy inequality~\cite{blakley1965holder} (independently proved by Mulholland and Smith~\cite{mulholland1959inequality}) gives $\tP{m-1}{1-W}\ge(1-p)^{m-1}$.
Thus, \eqref{eq:intro-commonality} strengthens \eqref{eq:intro-extended-commonality} for odd cycles.

The inequality in \Cref{thm:main} follows immediately from \Cref{thm:commonality}.
Since $0\le1-W\le1$, deleting one edge from the $m$-cycle gives
\begin{equation}\label{eq:cycle-path-comparison}
  \tC m{1-W}\le\tP{m-1}{1-W}.
\end{equation}
Rearranging \eqref{eq:intro-commonality} and using $(1-p)^m-(1-p)^{m-1}=-p(1-p)^{m-1}$, we obtain
\[
  \tC mW-\bigl(\goal\bigr)\ge\tP{m-1}{1-W}-\tC m{1-W}\ge0.
\]
Equality in \eqref{eq:intro-main} holds precisely when equality holds in both \eqref{eq:intro-commonality} and \eqref{eq:cycle-path-comparison}.
By \Cref{thm:commonality}, this means that $W$ is $p$-regular and $\tC m{1-W}=\tP{m-1}{1-W}$.
For $p<1$, these conditions force $1-W$ to be a union of equal cliques, as proved in \Cref{prop:main-equality}, giving the balanced complete multipartite graphons $W_k$ with $k\ge3$.
At $p=1$, $W\equiv1$ almost everywhere and equality holds; by \eqref{eq:intro-kpartite-value}, the converse holds for $W_k$.

We also remark that for $m\in\{3,5,7\}$, the explicit calculations in \Cref{sec:short-cycles} prove the inequality in \Cref{thm:commonality} for all $p\ge1/2$.
Therefore, the lower bound in \Cref{thm:main} holds for every $p\in[0,1]$ for $C_3$, $C_5$, and $C_7$.
On the other hand, there is a counterexample for $C_9$ when $p = 501/1000$, showing that \Cref{thm:commonality} does not hold for all odd cycles and all $p$. Thus, to prove \Cref{thm:main} for the full range of $p\in[0,1]$, one needs a different approach.
See \Cref{sec:conclusion} for the detailed discussion.

\subsection{Proof strategy: four reductions}
Our proof strategy can be summarised as four reductions that simplify the problem while preserving the essential structure of the inequality.
Before introducing the reductions, we set up some preliminary observations.

Let $\Tker W$ denote the integral operator with kernel $W$, formally defined in \Cref{subsec:graphon-preliminaries}.
The trace formula expresses the two cycle densities as
\[
  \tC mW=\Tr(\Tker W^m),
  \qquad
  \tC m{1-W}=\Tr(\Tker{1-W}^m).
\]
Each trace is the sum of the $m$th powers of the corresponding operator's eigenvalues.
Our aim is to show that the sum of these two traces exceeds $p^m+(1-p)^m$ by at least the path-density correction in \eqref{eq:intro-commonality}.
We obtain this bound through four reductions, all uniform in the odd cycle length $m$.

\paragraph{Step 1: Trace decomposition.}
We split the space into constant and zero-average functions and expand the two cycle traces with respect to this decomposition.
For a real-valued function $f\in L^2[0,1]$, write
\[
  f=c\one+f_0,\qquad
  c=\int_0^1 f(x)\dd x,\qquad
  \int_0^1 f_0(x)\dd x=0,
\]
where $\one$ is the constant function equal to one.
Define the operators
\[
  \Pi_{\mathrm c} f=c\one,\qquad \Pi_0 f=f_0,
\]
which retain the constant and zero-average components, respectively.
For either $T=\Tker W$ or $T=\Tker{1-W}$, the trace decomposes as
\[
    \Tr(T^m)
    =  \Tr\bigl((\Pi_{\mathrm c}T)^m\bigr)
       +\Tr\bigl((\Pi_0T)^m\bigr)
       +\sum_{\substack{(s_1,\ldots,s_m)\in\{\mathrm c,0\}^m\\
                         \text{not all }s_i\text{ equal}}}
       \Tr\bigl(\Pi_{s_1}T\cdots\Pi_{s_m}T\bigr).
\]
The first term retains only the constant component, the second retains only the zero-average component, and the final sum includes every sequence using both components.
For $\Tker W$ and $\Tker{1-W}$, the first contributions sum to $p^m+(1-p)^m$, while the second contributions cancel for odd $m$.
It therefore remains to show that the two sums involving both components together are at least the path-density term.
We handle these mixed trace terms by approximating $W$ in $L^1$ by step graphons and using finite-matrix calculations (\Cref{lem:step-reduction}).

\paragraph{Step 2: Excursion expansion.}
We reduce this comparison to the nonnegativity of polynomials.
Grouping the mixed trace terms into excursions gives the following expression for the difference between the two sides of \eqref{eq:intro-commonality}:
\[
  \begin{aligned}
    & \Bigl(\tC mW+\tC m{1-W}-p^m-(1-p)^m\Bigr)
      - \Bigl[\tP{m-1}{1-W}-(1-p)^{m-1}\Bigr]\\
    &\quad {} =
      \Biggl(\sum_{\substack{(s_1,\ldots,s_m)\in\{\mathrm c,0\}^m\\
                            \text{not all }s_i\text{ equal}}}
      \Tr\bigl(\Pi_{s_1}\Tker W\cdots\Pi_{s_m}\Tker W\bigr)
      +\sum_{\substack{(s_1,\ldots,s_m)\in\{\mathrm c,0\}^m\\
                             \text{not all }s_i\text{ equal}}}
      \Tr\bigl(\Pi_{s_1}\Tker{1-W}\cdots\Pi_{s_m}\Tker{1-W}\bigr)\Biggr)\\
    &\quad\qquad\qquad {} - \Bigl[\tP{m-1}{1-W}-(1-p)^{m-1}\Bigr]\\
    &\quad {} =
      \sum_{r=1}^{(m-1)/2}
      \int_{[-1/2,1/2]^r}
      P_{m,r}(1-p;\lambda_1,\ldots,\lambda_r)
      \prod_{i=1}^r\dd\mu(\lambda_i).
  \end{aligned}
\]
The first equality is the trace decomposition from the first reduction.
The second expresses the remaining trace sums, after subtracting the path-density correction, in terms of explicit polynomials.
Here $\mu$ is a finite nonnegative measure determined by $W$, supported on $[-1/2,1/2]$, and constructed in \Cref{subsec:spectral-setup}; the $P_{m,r}$ are symmetric polynomials defined in \eqref{eq:Pmr}.
In each mixed trace term, viewed cyclically, the zero-average projections form blocks separated by constant projections.
Each such block is called an \emph{excursion}, and $r$ counts the number of excursions.
The measure supplies nonnegative weights, so it suffices to prove, for every $1\le r\le(m-1)/2$, that
\begin{equation}\label{eq:intro-polynomial-positive}
  P_{m,r}(1-p;\lambda_1,\ldots,\lambda_r)\ge0
  \qquad\text{for every }(\lambda_1,\ldots,\lambda_r)\in[-1/2,1/2]^r.
\end{equation}

\paragraph{Step 3: Dirichlet averaging.}
For $m\in\{3,5,7\}$, direct computation proves \eqref{eq:intro-polynomial-positive} for all $p\ge1/2$ (\Cref{sec:short-cycles}).
For the general case, we reduce \eqref{eq:intro-polynomial-positive} to a one-variable inequality.
Choose nonnegative weights $\Theta_1,\ldots,\Theta_r$, summing to one, uniformly at random.
These weights have the Dirichlet$(1,\ldots,1)$ distribution.
Set $L=\sum_{i=1}^r\Theta_i\lambda_i$.
\Cref{lem:pge23-dirichlet} gives
\[
  P_{m,r}(1-p;\lambda_1,\ldots,\lambda_r)
  =\E\bigl[P_{m,r}(1-p;L,\ldots,L)\bigr].
\]
Since $L\in[-1/2,1/2]$ whenever all the $\lambda_i$ lie in this interval, it suffices to prove
\begin{equation}\label{eq:intro-diagonal-positive}
  P_{m,r}(1-p;\lambda,\ldots,\lambda)\ge0
  \qquad\text{for every }\lambda\in[-1/2,1/2].
\end{equation}
Although the number $r$ of variables can grow with $m$, \eqref{eq:intro-diagonal-positive} is a polynomial inequality in just one variable for each choice of $m$, $r$, and $p$.

\paragraph{Step 4: Gamma moment bound.}
We prove \eqref{eq:intro-diagonal-positive} using an inequality
between odd and even moments of $Y-b$, where $Y$ has a Gamma distribution
and $b\ge0$ is a real shift.
We write $\GammaLaw(a,1)$ for the Gamma distribution with shape $a>0$
and rate $1$, whose density is $y^{a-1}e^{-y}/\Gamma(a)$ for $y>0$.
Here $\Gamma(a)=\int_0^\infty y^{a-1}e^{-y}\dd y$ is the Gamma function.
An integral representation of $P_{m,r}(1-p;\lambda,\ldots,\lambda)$ establishes \eqref{eq:intro-diagonal-positive} when $\lambda\le0$ (\Cref{lem:pge23-ell-negative}).
For $\lambda>0$, let $Y\sim\GammaLaw(r,1)$ and
$Z\sim\GammaLaw(m-r,1)$ be independent.
By \eqref{eq:pge23-gamma-expectation},
\[
  P_{m,r}(1-p;\lambda,\ldots,\lambda)
  =\binom{m-r-1}{r}
    \E\!\left[\rho_{m-2r,m}\!\left(1-p+\frac{\lambda}{Z}Y\right)\right],
\]
where $\rho_{m-2r,m}$ is the polynomial defined in \eqref{eq:pge23-rho}.
To prove \eqref{eq:intro-diagonal-positive} for $\lambda>0$, it suffices to show that, for each fixed $z>0$, the expectation over $Y$ satisfies
\begin{equation}\label{eq:intro-expectation-positive}
  \E_Y\!\left[\rho_{m-2r,m}\!\left(1-p+\frac{\lambda}{z}Y\right)\right]\ge0.
\end{equation}
For all integers $r,j\ge1$ and all real shifts $b\ge0$, we prove the odd-to-even moment bound
\begin{equation}\label{eq:intro-moment-bound}
  3jb\,\E(Y-b)^{2j-1}\le(r+j)\,\E(Y-b)^{2j}.
\end{equation}
This bound yields \eqref{eq:intro-expectation-positive} for $p\ge2/3$,
as shown in the proof of \Cref{prop:pge23-gamma-positive}.
We state its rescaled form in \Cref{lem:pge23-gamma-moment} and prove it in
\Cref{app:gamma-moment}, using a differential equation satisfied by the even moments.

The application of the moment bound in \Cref{prop:pge23-gamma-positive} is the only step requiring $p\ge2/3$.
Averaging over $Z$ then proves \eqref{eq:intro-diagonal-positive} for $\lambda>0$, completing the fourth reduction.
\Cref{lem:pge23-dirichlet} extends this nonnegativity to the multivariable polynomials in \eqref{eq:intro-polynomial-positive}.
These polynomials are integrated against nonnegative weights in the second step, so the required trace comparison follows, proving \Cref{thm:commonality}.
The deduction preceding this overview then gives the odd-cycle bound in \Cref{thm:main}.

\paragraph{Equality and stability.}
We bound the gap in the strengthened commonality inequality from below in terms of the degree variance, so equality forces regularity.
For $p<1$, equality in the comparison between cycle and path densities then forces the complement to be a union of equal cliques, identifying the balanced complete multipartite graphons as the unique extremisers at the corresponding densities.
Qualitative stability follows from this uniqueness, compactness in cut distance, and continuity of homomorphism densities.

\paragraph{Organisation.}
\Cref{sec:preliminaries} collects graphon preliminaries and justifies the reduction to step graphons.
\Cref{sec:matrix-reduction} develops the finite-matrix representation and introduces the spectral quantities used in the proof.
\Cref{sec:excursions} develops the excursion expansion and reduces \Cref{thm:commonality} to the inequalities $P_{m,r}\ge0$.
\Cref{sec:short-cycles} works out this reduction for $C_3$, $C_5$, and $C_7$, while \Cref{sec:excursion-positivity} proves the inequalities for every odd cycle.
\Cref{sec:equality-stability} identifies the equality cases and deduces qualitative stability.
We close with further questions in \Cref{sec:conclusion}.

\section{Graphon preliminaries}
\label{sec:preliminaries}

\subsection{Graphon preliminaries and operators}
\label{subsec:graphon-preliminaries}

For an integrable function $F\colon[0,1]^2\to\R$, its \emph{cut norm} is
\[
  \norm F_\square=\sup_{S,T\subseteq[0,1]}
    \left|\int_{S\times T}F(x,y)\ddxy\right|,
\]
where the supremum is over measurable sets $S,T\subseteq[0,1]$.

For graphons $W,V$, their \emph{cut distance} is
\begin{equation}\label{eq:cut-distance}
  \delta_\square(W,V)=\inf_\varphi\norm{W-V^\varphi}_\square,
\end{equation}
where $\varphi$ ranges over measure-preserving bijections of $[0,1]$ modulo null sets, and $V^\varphi(x,y)=V(\varphi(x),\varphi(y))$.
The cut distance is a pseudometric on graphons.
Graphons at zero cut distance are called \emph{equivalent}, and identifying equivalent graphons gives a metric space.
This space is compact, and for every fixed finite simple graph $H$, the map $W\mapsto\dens HW$ is continuous in this metric~\cite{borgs2008convergent,lovasz2012large}.
In particular, edge, cycle, and path densities are continuous in cut distance.

For a graphon $W$, define the integral operator $\Tker W$ on $L^2[0,1]$ by
\[
  (\Tker Wf)(x)=\int_0^1 W(x,y)f(y)\dd y.
\]
Since $W$ is symmetric and square-integrable, $\Tker W$ is a compact self-adjoint Hilbert--Schmidt operator.
Its nonzero eigenvalues $(\lambda_i)_i$, counted with multiplicity, satisfy
\begin{equation}\label{eq:graphon-hilbert-schmidt}
  \sum_i\lambda_i^2=\intsq W(x,y)^2\ddxy<\infty.
\end{equation}
The eigenvalues are bounded, so their power sums converge absolutely for every integer $m\ge3$.
Thus, $\Tker W^m$ is trace class, and the cycle trace formula~\cite{lovasz2012large} gives
\begin{equation}\label{eq:graphon-cycle-trace}
  \tC mW=\Tr(\Tker W^m)=\sum_i\lambda_i^m.
\end{equation}
For the path $P_\ell$ with $\ell\ge1$ edges, repeated integration gives
\begin{equation}\label{eq:graphon-path-operator}
  \tP\ell W=\ip\one{\Tker W^\ell\one},
\end{equation}
where $\one$ is the constant function equal to one.

We note that in the next section, we reduce the problem to the finite matrix calculation so the main inequalities can be verified without using this operator framework, but later stability results will use this.

\subsection{Step-graphon approximation}
\label{subsec:step-approximation}

To prove the inequalities in \Cref{thm:main,thm:commonality} by finite matrix calculations, we first restrict attention to \emph{step graphons}: graphons that are constant on each product of two parts of a finite measurable partition of $[0,1]$.
We will use partitions into intervals of equal length, so these products form a grid of equally sized rectangles.
Each such graphon is specified by the finite matrix of its values, and its cycle and path densities can be expressed through matrix products.
The following lemma justifies this restriction: proving the two inequalities in those theorems for all such graphons, with no bound on the number of parts, proves them for arbitrary graphons as well.

\begin{lemma}
\label{lem:step-reduction}
  It suffices to prove the inequalities in \Cref{thm:main,thm:commonality} for graphons that
  are constant on each rectangle $I_a\times I_b$, where
  $I_1,\ldots,I_N$ partition $[0,1]$ into intervals of length $1/N$
  and $N$ ranges over all positive integers.
\end{lemma}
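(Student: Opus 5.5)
We approximate an arbitrary graphon $W$ by its conditional expectations on the dyadic (or equal-interval) partitions and pass to the limit in both inequalities. For each $N$, let $W_N$ be the step graphon that on $I_a\times I_b$ takes the average value $N^2\int_{I_a\times I_b}W$. Then $W_N$ is symmetric, takes values in $[0,1]$, and is constant on each rectangle $I_a\times I_b$. Averaging preserves integrals, so $\edens{W_N}=\edens W=p$ exactly, and hence $W_N$ falls within the hypothesis $p\ge2/3$ of \Cref{thm:main,thm:commonality}. Similarly, $1-W_N=(1-W)_N$ is the step average of the complement.

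The key analytic input is that $W_N\to W$ in $L^1([0,1]^2)$ as $N\to\infty$. We argue this by density: continuous functions are dense in $L^1$, the averaging operator is an $L^1$-contraction, and for continuous $g$ the averages $g_N$ converge to $g$ uniformly. If one prefers a martingale argument, one can restrict to $N=2^k$, where the partitions are nested, and invoke martingale convergence. Either subsequence suffices, since we only need some sequence of step graphons of the permitted form.

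Next, we use that homomorphism densities are Lipschitz in $L^1$ for graphons bounded by $1$. For a fixed graph $H$, the telescoping bound $|\dens HU-\dens HV|\le e(H)\,\norm{U-V}_1$ holds: replace one edge factor at a time, bound the remaining factors by $1$, and integrate out the other variables. Applying this to $H=C_m$ and $H=P_{m-1}$, with $U,V$ ranging over $W_N,W$ and $1-W_N,1-W$, gives $\tC m{W_N}\to\tC mW$, $\tC m{1-W_N}\to\tC m{1-W}$, and $\tP{m-1}{1-W_N}\to\tP{m-1}{1-W}$. Alternatively, one can cite continuity in cut distance from \Cref{subsec:graphon-preliminaries}, since $\norm\cdot_\square\le\norm\cdot_1$.

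Finally, suppose the inequalities \eqref{eq:intro-main} and \eqref{eq:intro-commonality} hold for every $W_N$. The right-hand sides depend only on $p$, which is unchanged, together with path densities that converge. Both sides are therefore continuous along the sequence, and the non-strict inequalities pass to the limit. The only point needing care is the $L^1$ convergence of the step averages. This is a standard Lebesgue differentiation/density fact, so we expect no serious obstacle. Note that the lemma concerns only the inequalities and not the equality characterisations, which is consistent with the limiting argument, since limits cannot preserve equality information.
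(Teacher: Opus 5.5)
Your proposal is correct and follows essentially the same route as the paper. Both average $W$ over the equal-interval grid, which preserves $p$, symmetry and the $[0,1]$ range; both prove $W_N\to W$ in $L^1$ by a density argument; and both pass to the limit using the telescoping bound $|\dens H{W_N}-\dens HW|\le e(H)\norm{W_N-W}_1$ for $W$ and for $1-W$. The only difference is in the density step: the paper restricts to $N=2^s$ and uses that the averaging reproduces dyadic step functions of a fixed level, while your uniform-continuity argument with continuous approximants works for every $N$, and either suffices.
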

Starting from an arbitrary graphon $W$, divide $[0,1]$ into $N$ equal intervals and replace $W$ on each rectangle $I_a\times I_b$ by its average value there.
This produces a step graphon with exactly the same edge density as $W$.
Starting with two equal intervals and repeatedly halving each partition interval gives increasingly fine grids, with $N=2,4,8,\ldots$ parts.
Averaging $W$ on each of these grids produces a sequence of step graphons whose cycle and path densities, and those of their complements, converge to the corresponding densities of $W$ and $1-W$.
Thus, if the two inequalities hold for every graphon in this sequence, they also hold for $W$ by taking limits.
The formal proof is given in \Cref{app:step-reduction}.

\section{Finite-matrix setup and spectral measure}
\label{sec:matrix-reduction}

This section prepares the finite-matrix calculation underlying the proof.
We express cycle and path densities as traces and matrix products, then introduce the spectral quantities used in the excursion expansion in the next section.

By \Cref{lem:step-reduction}, we may fix a step graphon $W$ with edge density $\edens W=p\ge2/3$, constant on each rectangle $I_a\times I_b$, where $I_1,\ldots,I_N$ partition $[0,1]$ into intervals of length $1/N$.
Write
\[
  U=1-W,
  \qquad
  q=1-p.
\]
Thus, $U$ is the complementary graphon and has edge density $q$.
The assumption $p\ge2/3$ will be needed only for the positivity argument in \Cref{sec:excursion-positivity}; the matrix identities and excursion expansions developed before that point hold at every edge density.

\subsection{Matrix representation}
\label{subsec:step-matrices}

A \emph{step kernel} on this partition is a symmetric function $K\colon[0,1]^2\to\R$ that is constant on each rectangle $I_a\times I_b$.
It is a step graphon when its values lie in $[0,1]$; in particular, $W$, $U$, and the constant-one graphon are step kernels on this partition.
Write $K_{ab}$ for the value of $K$ on $I_a\times I_b$, and define the $N\times N$ matrix
$M_K$ and the vector $e\in\R^N$ by
\begin{equation*}
  M_K=\left(\frac{K_{ab}}N\right)_{a,b=1}^N,
  \qquad
  e=\frac1{\sqrt N}(1,\ldots,1)^{\mathsf T}.
\end{equation*}
Then, $M_K$ is symmetric and $\norm e=1$. For a step graphon $K$, this normalisation expresses the relevant graph densities through matrix powers:
\begin{align}
  \Tr(M_K^j)
  &=\frac1{N^j}\sum_{a_1,\ldots,a_j=1}^N
    K_{a_1a_2}\cdots K_{a_ja_1}
   =\dens{C_j}K \qquad (j\ge3),\notag\\
  e^{\mathsf T}M_K^ke
  &=\frac1{N^{k+1}}\sum_{a_0,\ldots,a_k=1}^N
    K_{a_0a_1}\cdots K_{a_{k-1}a_k}
   =\dens{P_k}K \qquad (k\ge1),\notag\\
  e^{\mathsf T}M_Ke
  &=\frac1{N^2}\sum_{a,b=1}^NK_{ab}
  =\edens K. \label{eq:edge-matrix}
\end{align}

\subsection{Complementary blocks and spectral measure}
\label{subsec:spectral-setup}

We now separate the constant and zero-average components of $M_W$ and $M_U$, and construct a nonnegative measure $\mu$ whose moments represent the contributions of excursions between these components.

Let $\mathcal E_0=e^\perp$, and let $I$ be the $N\times N$ identity
matrix. The matrices
\[
  P=ee^{\mathsf T},
  \qquad
  Q=I-P
\]
are the orthogonal projections of $\R^N$ onto $\R e$ and $\mathcal E_0$,
respectively. They are the matrix counterparts of the projections
$\Pi_{\mathrm c}$ and $\Pi_0$ used in the introduction.
Relative to the decomposition $\R e\oplus\mathcal E_0$, these projections have the block forms
\begin{equation}\label{eq:block-PQ}
  P=\begin{pmatrix}1&0\\0&0\end{pmatrix},
  \qquad
  Q=\begin{pmatrix}0&0\\0&I_{\mathcal E_0}\end{pmatrix},
\end{equation}
where $I_{\mathcal E_0}$ is the identity on $\mathcal E_0$.

Define the vector $g\in\R^N$ and the symmetric linear map
$A\colon\mathcal E_0\to\mathcal E_0$ by
\begin{equation*}
  g=M_Ue-qe,
  \qquad
  A=QM_UQ\big|_{\mathcal E_0}.
\end{equation*}
By \eqref{eq:edge-matrix},
\[
  e^{\mathsf T}g=e^{\mathsf T}M_Ue-q=0,
\]
so $g\in\mathcal E_0$.
The constant-one graphon has matrix $M_1=P$, and hence $M_W=P-M_U$.
Therefore, relative to $\R e\oplus\mathcal E_0$,
\begin{equation}\label{eq:block-UW}
  M_U=
  \begin{pmatrix}
    q&g^{\mathsf T}\\
    g&A
  \end{pmatrix},
  \qquad
  M_W=
  \begin{pmatrix}
    p&-g^{\mathsf T}\\
    -g&-A
  \end{pmatrix}.
\end{equation}

\begin{lemma}\label{lem:half-norm}
  Every eigenvalue of $A$ lies in $[-1/2,1/2]$.
\end{lemma}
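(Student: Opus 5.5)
We must show every eigenvalue of $A=QM_UQ|_{\mathcal E_0}$ lies in $[-1/2,1/2]$. The plan is to bound the Rayleigh quotient $v^{\mathsf T}M_Uv$ over unit vectors $v\in\mathcal E_0$. Since $A$ is the compression of the symmetric matrix $M_U$ to $\mathcal E_0$, its eigenvalues are values of $v^{\mathsf T}M_Uv$ with $\norm v=1$ and $e^{\mathsf T}v=0$. So it suffices to show $\lvert v^{\mathsf T}M_Uv\rvert\le1/2$ for every such $v$.

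Write $v=(v_1,\ldots,v_N)$ with $\sum_a v_a=0$ and $\sum_a v_a^2=1$. Then
\[
  v^{\mathsf T}M_Uv=\frac1N\sum_{a,b}U_{ab}v_av_b ,
\]
with $0\le U_{ab}\le1$. Split $v=v^+-v^-$ into positive and negative parts, and set $s=\sum_a v_a^+=\sum_a v_a^-$ (equal because $\sum_a v_a=0$). Since the entries of $U$ lie in $[0,1]$, we get
\[
  \sum_{a,b}U_{ab}v_av_b\le\sum_{a,b}U_{ab}\bigl(v_a^+v_b^++v_a^-v_b^-\bigr)\le\bigl(\textstyle\sum_a v_a^+\bigr)^2+\bigl(\textstyle\sum_a v_a^-\bigr)^2=2s^2 .
\]
Similarly,
\[
  \sum_{a,b}U_{ab}v_av_b\ge-2\sum_{a,b}U_{ab}v_a^+v_b^-\ge-2s^2 .
\]
Hence $\lvert v^{\mathsf T}M_Uv\rvert\le 2s^2/N$.

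It then suffices to show $s^2\le N/4$, and this is the only real step. By Cauchy--Schwarz, if $k$ coordinates are positive, then $s^2\le k\sum_a(v_a^+)^2$. Likewise $s^2\le(N-k)\sum_a(v_a^-)^2$, using at most $N-k$ negative coordinates. Writing $\alpha=\sum_a(v_a^+)^2$ and $\beta=\sum_a(v_a^-)^2$, we have $\alpha+\beta=1$. Combining the two bounds gives
\[
  s^2\Bigl(\frac1k+\frac1{N-k}\Bigr)\le\alpha+\beta=1,
\]
so $s^2\le k(N-k)/N\le N/4$. (If $k=0$ or $k=N$, then $v=0$, which is excluded.) Therefore $\lvert v^{\mathsf T}M_Uv\rvert\le2\cdot(N/4)/N=1/2$, which proves the lemma. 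The only slightly delicate point is combining the two Cauchy--Schwarz bounds using $\alpha+\beta=1$, rather than bounding $s^2$ by $N/2$ crudely, which would lose a factor of two.
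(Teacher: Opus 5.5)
Your proof is correct and is essentially the paper's argument: you split $v$ into positive and negative parts, drop the sign-definite terms, and compare $M_U$ entrywise with $M_1=ee^{\mathsf T}$ to get $\lvert v^{\mathsf T}M_Uv\rvert\le 2s^2/N$. The step you call delicate is immediate, as in the paper: a single Cauchy--Schwarz gives $2s=\sum_a\lvert v_a\rvert\le\sqrt N\,\norm v=\sqrt N$, so $s^2\le N/4$ without any counting of positive coordinates.
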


\begin{proof}
  Throughout this proof, vector coordinates and matrix entries refer to the standard basis of $\R^N$, indexed by the partition intervals $I_1,\ldots,I_N$.
  In these coordinates,
  \[
    0\le (M_U)_{ab}=\frac{U_{ab}}{N}\le\frac1N=(M_1)_{ab},
    \qquad 1\le a,b\le N,
  \]
  so $0\le M_U\le M_1$ entrywise.
  Let $f\in\mathcal E_0$ with $\norm f=1$, and define its coordinatewise
  positive and negative parts by
  \[
    (f_+)_i=\max\{f_i,0\},
    \qquad
    (f_-)_i=\max\{-f_i,0\}.
  \]
  Both vectors have nonnegative entries, $f=f_+-f_-$, and $|f|=f_++f_-$.
  Since $e^{\mathsf T}f=0$,
  \[
    e^{\mathsf T}f_+=e^{\mathsf T}f_-=:a,
    \qquad
    2a=e^{\mathsf T}|f|\le\norm e\norm f=1.
  \]
  Expanding $f=f_+-f_-$ gives
  \[
    f^{\mathsf T}M_Uf
    =f_+^{\mathsf T}M_Uf_+
     -f_+^{\mathsf T}M_Uf_-
     -f_-^{\mathsf T}M_Uf_+
     +f_-^{\mathsf T}M_Uf_-.
  \]
  Since $f_+$, $f_-$, and $M_U$ have nonnegative entries,
  $f_+^{\mathsf T}M_Uf_-$ and $f_-^{\mathsf T}M_Uf_+$ are nonnegative.
  Dropping these terms gives
  \[
    f^{\mathsf T}M_Uf
    \le f_+^{\mathsf T}M_Uf_++f_-^{\mathsf T}M_Uf_-
      \le2a^2\le\frac12,
  \]
  where the second inequality uses the entrywise comparison $M_U\le M_1$
  in the standard coordinates, the nonnegative entries of $f_+$ and $f_-$,
  and the identities $f_+^{\mathsf T}M_1f_+=f_-^{\mathsf T}M_1f_-=a^2$.  Similarly,
  $f_+^{\mathsf T}M_Uf_+$ and $f_-^{\mathsf T}M_Uf_-$ are nonnegative,
  so dropping these terms gives
  \[
    f^{\mathsf T}M_Uf
    \ge-f_+^{\mathsf T}M_Uf_- - f_-^{\mathsf T}M_Uf_+
      \ge-2a^2\ge-\frac12.
  \]
  Here the second inequality again uses the entrywise comparison $M_U\le M_1$
  in the standard coordinates and the nonnegative entries of $f_+$ and $f_-$,
  together with $f_+^{\mathsf T}M_1f_-=f_-^{\mathsf T}M_1f_+=a^2$.
  Since $f^{\mathsf T}Af=f^{\mathsf T}M_Uf$, the Rayleigh--Ritz
  characterisation of the eigenvalues of the symmetric map $A$ proves the
  claim.
\end{proof}

Let $\lambda_1,\ldots,\lambda_{N-1}$ be the eigenvalues of $A$, counted
with multiplicity, and choose corresponding orthonormal eigenvectors
$u_1,\ldots,u_{N-1}$ in $\mathcal E_0$. For each $i$, let $P_i$ be the
orthogonal projection onto the one-dimensional subspace spanned by $u_i$.
Define the finite nonnegative measure
\begin{equation*}
  \mu=\sum_{i=1}^{N-1}\norm{P_ig}^2\delta_{\lambda_i}.
\end{equation*}
Here $\delta_{\lambda_i}$ is the unit point mass at $\lambda_i$.

\begin{lemma}\label{lem:spectral-moments}
  The measure $\mu$ is independent of the choice of orthonormal eigenvectors
  within each eigenspace. For every integer $j\ge0$,
  \begin{equation}\label{eq:moments}
    \supp\mu\subseteq[-1/2,1/2],
    \qquad
    g^{\mathsf T}A^jg=\int\lambda^j\dd\mu(\lambda),
    \qquad
    \mu(\R)=\norm g^2.
  \end{equation}
\end{lemma}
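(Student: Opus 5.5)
The plan is to rewrite $\mu$ through the spectral projections of $A$, which do not depend on any choice of basis, and then read off the three claims in \eqref{eq:moments} from the spectral decomposition of $A$.

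\emph{Independence of the eigenbasis.} Let $\nu_1,\ldots,\nu_s$ be the distinct eigenvalues of $A$, and let $E_k$ be the orthogonal projection onto the eigenspace of $\nu_k$. These projections are determined by $A$ alone. For each $k$, the vectors $u_i$ with $\lambda_i=\nu_k$ form an orthonormal basis of that eigenspace, so
\[
  E_k=\sum_{i:\lambda_i=\nu_k}P_i.
\]
Since the $P_i$ project onto mutually orthogonal lines, Pythagoras gives $\norm{E_kg}^2=\sum_{i:\lambda_i=\nu_k}\norm{P_ig}^2$. Grouping the point masses by eigenvalue then yields
\[
  \mu=\sum_{k=1}^s\norm{E_kg}^2\delta_{\nu_k},
\]
and the right-hand side does not involve the choice of eigenvectors.

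\emph{Support.} The measure $\mu$ is a finite sum of point masses with nonnegative weights located at eigenvalues of $A$. By \Cref{lem:half-norm}, every such eigenvalue lies in $[-1/2,1/2]$, so $\supp\mu\subseteq[-1/2,1/2]$.

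\emph{Moments and total mass.} Since $g\in\mathcal E_0$ and $u_1,\ldots,u_{N-1}$ is an orthonormal basis of $\mathcal E_0$, we can expand $g=\sum_i\ip{u_i}g\,u_i$, where $\norm{P_ig}^2=\ip{u_i}g^2$. Applying $A^j$ multiplies each term by $\lambda_i^j$, so
\[
  g^{\mathsf T}A^jg=\sum_i\lambda_i^j\ip{u_i}g^2=\sum_i\lambda_i^j\norm{P_ig}^2=\int\lambda^j\dd\mu(\lambda).
\]
For $j=0$ we interpret $A^0$ as the identity on $\mathcal E_0$ and $\lambda^0\equiv1$. The same computation then gives $\norm g^2=\mu(\R)$, which is Parseval's identity.

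Each step is a routine consequence of the spectral theorem for the symmetric map $A$ on $\mathcal E_0$. The only point needing care, though it is not a real obstacle, is that $g$ must lie in $\mathcal E_0$ for the eigen-expansion of $g$ to be valid; this was verified before \eqref{eq:block-UW}.
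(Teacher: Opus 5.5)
Your proposal is correct and follows essentially the same route as the paper. In both, the moment identity comes from the spectral decomposition of $A$, the total mass from the case $j=0$, and the support bound from \Cref{lem:half-norm}; basis-independence comes from grouping the weights at each eigenvalue into $\norm{E_kg}^2$. You simply make explicit the Pythagoras and Parseval steps that the paper leaves implicit.
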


\begin{proof}
  By the spectral decomposition of $A$,
  \[
    g^{\mathsf T}A^jg
    =\sum_{i=1}^{N-1}\lambda_i^j\norm{P_ig}^2
    =\int\lambda^j\dd\mu(\lambda).
  \]
  Taking $j=0$ gives the total mass identity, and the support bound follows from \Cref{lem:half-norm}.
  The sum of weights at each eigenvalue is the squared norm of the projection of $g$ onto its eigenspace, so $\mu$ is independent of the eigenbasis.
\end{proof}

Thus, the quantities $g^{\mathsf T}A^jg$ are moments of a single nonnegative measure $\mu$ supported on $[-1/2,1/2]$.
In the next section, we use this representation to express the difference between the two sides of the inequality \eqref{eq:intro-commonality} in \Cref{thm:commonality} as a sum of repeated integrals of polynomials against $\mu$.
Since $\mu$ is nonnegative, proving that these polynomials are nonnegative will establish the theorem.

\section{Excursion expansions}
\label{sec:excursions}

Using the matrix representation and spectral measure from \Cref{sec:matrix-reduction}, we express the difference between the two sides of \eqref{eq:intro-commonality} as polynomial integrals against $\mu$ (\Cref{lem:polynomial-reduction}).
Since $\mu$ is nonnegative, the nonnegativity of these polynomials, proved in \Cref{sec:short-cycles,sec:excursion-positivity}, will establish \Cref{thm:commonality}.

To obtain these formulas, we expand matrix powers by inserting $P+Q=I$.
For an odd integer $m\ge3$, the cycle density of $U$ satisfies
\begin{equation}\label{eq:trace-projection-expansion}
  \tC mU=\Tr(M_U^m)
  =\sum_{R_1,\ldots,R_m\in\{P,Q\}}
    \Tr(R_1M_UR_2M_U\cdots R_mM_U).
\end{equation}
Each sequence $R_1\cdots R_m$ of letters $P$ and $Q$, called a \emph{word}, corresponds to exactly one summand in \eqref{eq:trace-projection-expansion}.
We call a word \emph{mixed} if it contains both $P$ and $Q$.
For trace terms, we regard the word as cyclic, so its last and first letters are consecutive.

A \emph{block} is a maximal consecutive run of the same letter.
Thus, in a mixed word, each $Q$-block has a $P$ immediately before and after it.
For example, the segment
\[
  P\,\underbrace{QQ}_{\text{one $Q$-block}}\,P
\]
of a word corresponds to the matrix segment $P M_U Q M_U Q M_U P$.
In a mixed cyclic word, we call each $Q$-block an \emph{excursion}.
For a $Q$-block of length $\beta+1$, where $\beta\ge0$ is an integer, the corresponding matrix segment satisfies
\[
  P M_U(Q M_U)^{\beta+1}P=(g^{\mathsf T}A^\beta g)P.
\]
By \eqref{eq:moments}, its scalar coefficient is $\int\lambda^\beta\dd\mu(\lambda)$.
Together with the powers of $q$ from the $P$-blocks, these moments turn the sums over block lengths into polynomial integrals.

We group words by their number $r$ of excursions and their block lengths.
The resulting monomial sums are complete homogeneous symmetric polynomials, defined as follows.
For integers $k\ge1$ and $d\ge0$ and variables $y_1,\ldots,y_k$, the
complete homogeneous symmetric polynomial of degree $d$ is
\begin{equation}\label{eq:h-def}
  h_d(y_1,\ldots,y_k)
  =\sum_{\substack{\gamma_1,\ldots,\gamma_k\ge0\\
                    \gamma_1+\cdots+\gamma_k=d}}
   y_1^{\gamma_1}\cdots y_k^{\gamma_k},
  \qquad h_0=1.
\end{equation}
We write $\rep ak$ in an argument list when $a$ is repeated $k$ times.
In \eqref{eq:h-def}, the exponents $\gamma_i$ are nonnegative integers.
All repeated integrals against $\mu$ below are over $[-1/2,1/2]$,
which contains its support, unless another domain is specified.

We now make this expansion precise for cycle densities.

\begin{lemma}
\label{lem:cycle-excursions}
  For every odd $m\ge3$,
  \begin{align}
    \tC mU
    ={}&q^m+\Tr(A^m)
    +\sum_{r=1}^{(m-1)/2}\frac mr
      \int\!\cdots\!\int
      h_{m-2r}(\rep qr,\lambda_1,\ldots,\lambda_r)
      \prod_{i=1}^r\dd\mu(\lambda_i),
    \label{eq:cycle-U}\\
    \tC mW
    ={}&p^m-\Tr(A^m)
    +\sum_{r=1}^{(m-1)/2}\frac mr
      \int\!\cdots\!\int
      h_{m-2r}(\rep pr,-\lambda_1,\ldots,-\lambda_r)
      \prod_{i=1}^r\dd\mu(\lambda_i).
    \label{eq:cycle-W}
  \end{align}
\end{lemma}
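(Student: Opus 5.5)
We expand \eqref{eq:trace-projection-expansion} word by word, using the block forms \eqref{eq:block-PQ} and \eqref{eq:block-UW}. The two constant words account for the leading terms. The all-$P$ word gives $\Tr((PM_U)^m)=(e^{\mathsf T}M_Ue)^m=q^m$ by \eqref{eq:edge-matrix}. The all-$Q$ word gives $\Tr((QM_UQ)^m)=\Tr(A^m)$, since $QM_UQ$ acts as $A$ on $\mathcal E_0$ and as $0$ on $\R e$.

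It remains to evaluate the mixed cyclic words. Fix such a word with $r\ge1$ $Q$-blocks (excursions) of lengths $\beta_1+1,\ldots,\beta_r+1$. Its $r$ $P$-blocks, listed cyclically, have lengths $\alpha_1,\ldots,\alpha_r\ge1$. The lengths satisfy $\sum_i(\alpha_i+\beta_i+1)=m$. We evaluate the trace with the following rules:
\begin{itemize}[nosep]
\item each $P$ satisfies $P=ee^{\mathsf T}$, and $PM_UP=qP$;
\item a $P$-block of length $\alpha$ contributes a factor $q^{\alpha-1}$ together with the link into the next block;
\item each excursion contributes $PM_U(QM_U)^{\beta+1}P=(g^{\mathsf T}A^\beta g)P$, which one checks from the block forms, since $QM_Ue=g$ and $QM_UQ=A$ on $\mathcal E_0$.
\end{itemize}
Chaining these factors around the cycle and using $\Tr(P)=1$, the word contributes
\[
  q^{\sum_i(\alpha_i-1)}\prod_{i=1}^r g^{\mathsf T}A^{\beta_i}g
  =\int\!\cdots\!\int q^{\sum_i(\alpha_i-1)}\prod_{i=1}^r\lambda_i^{\beta_i}\prod_{i=1}^r\dd\mu(\lambda_i)
\]
by \Cref{lem:spectral-moments}. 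The total degree is $\sum_i(\alpha_i-1)+\sum_i\beta_i=m-2r$. Because each excursion needs at least one $Q$ and one $P$, we have $2r\le m$, and since $m$ is odd this forces $r\le(m-1)/2$.

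The main obstacle is the combinatorial count: we must show that summing over all mixed cyclic words with $r$ excursions yields exactly $\frac mr h_{m-2r}(\rep qr,\lambda_1,\ldots,\lambda_r)$ after integration. We plan to do this by marking a starting excursion.
\begin{itemize}[nosep]
\item \emph{Pointed words.} Consider pairs consisting of a word and a distinguished $Q$-block. Each word with $r$ excursions appears in exactly $r$ such pairs.
\item \emph{Rotations.} A pointed word is determined by the tuple $(\alpha_1,\beta_1,\ldots,\alpha_r,\beta_r)$, read starting at the distinguished block, together with a rotation offset $s\in\{0,\ldots,m-1\}$ giving the position of the first letter of the distinguished block among the $m$ positions. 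This correspondence is $m$-to-one onto tuples, up to relabelling.
\item \emph{Resulting sum.} The sum over words therefore equals $\frac mr$ times the sum over ordered tuples. Periodic words cause no trouble in this double count, because counting (word, block) pairs against (tuple, offset) pairs is exact.
\item \emph{Identification.} The integrand depends symmetrically on the paired exponents $\beta_i$ of $\lambda_i$, since the integration variables are exchangeable. Writing $\gamma_i=\alpha_i-1\ge0$, the sum over ordered tuples with $\sum\gamma_i+\sum\beta_i=m-2r$ of $q^{\sum\gamma_i}\prod\lambda_i^{\beta_i}$ is exactly $h_{m-2r}(\rep qr,\lambda_1,\ldots,\lambda_r)$, by \eqref{eq:h-def}.
\end{itemize}
This proves \eqref{eq:cycle-U}.

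For \eqref{eq:cycle-W}, we apply the same argument to $M_W$ using \eqref{eq:block-UW}. Replacing $q,g,A$ by $p,-g,-A$ leaves each excursion factor $(-g)^{\mathsf T}(-A)^\beta(-g)=\int(-\lambda)^\beta\dd\mu$ and turns the diagonal term into $\Tr((-A)^m)=-\Tr(A^m)$, because $m$ is odd. With these substitutions the same count yields \eqref{eq:cycle-W}.
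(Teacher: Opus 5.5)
Your proposal is correct and follows the paper's proof essentially step by step. It uses the same block identities $PM_UP=qP$ and $PM_U(QM_U)^{\beta+1}P=(g^{\mathsf T}A^\beta g)P$, the same double count giving the factor $m/r$ (you mark a $Q$-block where the paper marks the $Q$-to-$P$ boundary ending it, an equivalent choice), and the same substitution $(q,g,A)\mapsto(p,-g,-A)$ for $W$; your shift $\gamma_i=\alpha_i-1$ is just the paper's convention of writing $P$-block lengths as $\alpha_i+1$.
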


\begin{proof}
  We first prove \eqref{eq:cycle-U}.
  In \eqref{eq:trace-projection-expansion}, the word consisting only of $P$ gives $q^m$, while the word consisting only of $Q$ gives $\Tr(A^m)$.

  We now consider the remaining words, which contain both $P$ and $Q$.
  For a mixed word with $r$ cyclic $Q$-blocks, choose a boundary where $Q$ is followed by $P$ and rotate the word to start with $P$ and end with $Q$, which does not change the trace.
  For $1\le i\le r$, write the lengths of the $i$th $P$-block and the following $Q$-block as $\alpha_i+1$ and $\beta_i+1$, respectively, with $\alpha_i,\beta_i\ge0$.
  To evaluate the corresponding trace, use the block forms in \eqref{eq:block-PQ} and \eqref{eq:block-UW} to obtain
  \[
    Q M_U=\begin{pmatrix}0&0\\g&A\end{pmatrix},
    \qquad
    (Q M_U)^{\beta_i+1}P
    =\begin{pmatrix}0&0\\A^{\beta_i}g&0\end{pmatrix}.
  \]
  The second identity follows by induction on $\beta_i$: right-multiplication by $P$ gives the case $\beta_i=0$, and left-multiplication by $Q M_U$ multiplies the lower-left block by $A$.
  The block forms also give $P M_U P=qP$, and left-multiplication of the second identity by $P M_U$ yields
  \[
    P M_U(Q M_U)^{\beta_i+1}P
    =(g^{\mathsf T}A^{\beta_i}g)P.
  \]
  Using $P M_U P=qP$, the pair of blocks, ending at the next $P$, satisfies
  \[
    (P M_U)^{\alpha_i+1}(Q M_U)^{\beta_i+1}P
    =q^{\alpha_i}(g^{\mathsf T}A^{\beta_i}g)P.
  \]
  By cyclicity and $P^2=P$, appending a final $P$ does not change the trace.
  Multiplying the block factors and using $\Tr(P)=1$ gives the length condition and trace formula
  \begin{equation}\label{eq:excursion-length-weight}
    \sum_{i=1}^r(\alpha_i+\beta_i)=m-2r,
    \qquad
    \Tr(R_1M_UR_2M_U\cdots R_mM_U)
    =q^{\alpha_1+\cdots+\alpha_r}
     \prod_{i=1}^rg^{\mathsf T}A^{\beta_i}g.
  \end{equation}

  We count pairs consisting of a mixed word with positions labelled $1,\ldots,m$ and $r$ cyclic $Q$-blocks, together with a marked boundary where $Q$ is followed by $P$.
  Each word has exactly $r$ choices of the mark.
  Conversely, an ordered tuple $(\alpha_1,\beta_1,\ldots,\alpha_r,\beta_r)$ of nonnegative integers satisfying the length condition in \eqref{eq:excursion-length-weight}, together with the labelled position $j$ of the first $P$ after the mark, determines exactly one marked word: place the successive $P$- and $Q$-blocks of the specified lengths cyclically from $j$.
  Each tuple therefore gives $m$ marked words, distinct even for periodic words because their marks occupy different labelled positions.
  These two constructions are mutually inverse, and by cyclicity and \eqref{eq:excursion-length-weight}, the trace weight depends only on the tuple.
  Summing the weights in these two ways gives
  \begin{equation}\label{eq:U-r-excursions}
      \Bigg(
        r\!\!\!\!
        \sum_{\substack{R_1\cdots R_m\in\{P,Q\}^m\\
                       \text{both $P$ and $Q$ occur}\\
                       R_1\cdots R_m\text{ has }r\text{ cyclic $Q$-blocks}}}
        \Tr(R_1M_U\cdots R_mM_U)\Bigg)
      =
      \Bigg(
        m \!\!\!\!
        \sum_{\substack{\alpha_1,\beta_1,\ldots,
                          \alpha_r,\beta_r\ge0\\
                          \sum_{i=1}^r(\alpha_i+\beta_i)=m-2r}}
        q^{\alpha_1+\cdots+\alpha_r}
        \prod_{i=1}^rg^{\mathsf T}A^{\beta_i}g\Bigg).
  \end{equation}
  Dividing \eqref{eq:U-r-excursions} by $r$ gives the factor $m/r$.
  By \eqref{eq:moments} and the definition \eqref{eq:h-def},
  \begin{align*}
    &\sum_{\substack{\alpha_1,\beta_1,\ldots,\alpha_r,\beta_r\ge0\\
                     \sum_{i=1}^r(\alpha_i+\beta_i)=m-2r}}
      q^{\alpha_1+\cdots+\alpha_r}
      \prod_{i=1}^rg^{\mathsf T}A^{\beta_i}g\\
    &\quad=\int\!\cdots\!\int
      \sum_{\substack{\alpha_1,\beta_1,\ldots,\alpha_r,\beta_r\ge0\\
                     \sum_{i=1}^r(\alpha_i+\beta_i)=m-2r}}
      q^{\alpha_1+\cdots+\alpha_r}
      \lambda_1^{\beta_1}\cdots\lambda_r^{\beta_r}
      \prod_{i=1}^r\dd\mu(\lambda_i)\\
    &\quad=\int\!\cdots\!\int
      h_{m-2r}(\rep qr,\lambda_1,\ldots,\lambda_r)
      \prod_{i=1}^r\dd\mu(\lambda_i).
  \end{align*}
  Summing over $r$ and adding $q^m+\Tr(A^m)$ proves \eqref{eq:cycle-U}.

  For $M_W$, \eqref{eq:block-UW} replaces
  $(q,g,A)$ by $(p,-g,-A)$.  The two crossing signs cancel in each block,
  while
  \[
    (-g)^{\mathsf T}(-A)^{\beta_i}(-g)
    =(-1)^{\beta_i}g^{\mathsf T}A^{\beta_i}g.
  \]
  Hence $q$ becomes $p$, every $\lambda_i$ becomes $-\lambda_i$, and
  $\Tr(A^m)$ becomes $-\Tr(A^m)$.  This proves \eqref{eq:cycle-W}.
\end{proof}

We next derive the corresponding formula for path densities, which supplies the correction term in \eqref{eq:intro-commonality}.
Let $P_k$ denote the path with $k$ edges, with $P_0$ a single vertex.
Recall from \Cref{subsec:step-matrices} that
\begin{equation*}
  \tP kU=e^{\mathsf T}M_U^ke.
\end{equation*}

\begin{lemma}
\label{lem:path-excursions}
  For every $k\ge0$,
  \begin{equation}\label{eq:path-expansion}
    \tP kU=q^k+
    \sum_{r=1}^{\lfloor k/2\rfloor}
      \int\!\cdots\!\int
      h_{k-2r}(\rep q{(r+1)},\lambda_1,\ldots,\lambda_r)
      \prod_{i=1}^r\dd\mu(\lambda_i).
  \end{equation}
\end{lemma}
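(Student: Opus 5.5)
We expand $\tP kU=e^{\mathsf T}M_U^ke$ by inserting $I=P+Q$ between consecutive factors. We also use $e=Pe$ at both ends, and $e^{\mathsf T}Pe=1$. This gives
\[
  \tP kU=\sum_{R_2,\ldots,R_k\in\{P,Q\}}e^{\mathsf T}PM_UR_2M_U\cdots R_kM_UPe .
\]
So the relevant words are linear rather than cyclic: they begin and end with $P$, and the interior letters $R_2,\ldots,R_k$ are free. For $k=0$ the expression is $e^{\mathsf T}e=1=q^0$. For $k=1$ it is $e^{\mathsf T}M_Ue=q$. In both cases the sum over $r$ is empty, so we may assume $k\ge2$.

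Write the full linear word as $P R_2\cdots R_k P$, with the final $P$ coming from $Pe$. This is a sequence of $k+1$ letters whose first and last letters are $P$. Every $Q$-block in it is therefore flanked by $P$'s, and there is no cyclic identification to worry about. Suppose the word has $r$ $Q$-blocks. Then it decomposes as $r+1$ $P$-blocks alternating with $r$ $Q$-blocks. Let the $P$-blocks have lengths $\alpha_0+1,\ldots,\alpha_r+1$ and the $Q$-blocks have lengths $\beta_1+1,\ldots,\beta_r+1$, with all $\alpha_i,\beta_i\ge0$.

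The matrix product is evaluated with the identities already established in the proof of \Cref{lem:cycle-excursions}. These are $PM_UP=qP$ and $PM_U(QM_U)^{\beta+1}P=(g^{\mathsf T}A^\beta g)P$. Applying them block by block collapses the product to
\[
  q^{\alpha_0+\cdots+\alpha_r}\prod_{i=1}^r g^{\mathsf T}A^{\beta_i}g\cdot e^{\mathsf T}Pe .
\]
A little care is needed with the last $P$-block. Each $P$-block of length $\alpha+1$ that is followed by $P$ contributes $\alpha$ factors of $M_U$ between $P$'s. A $Q$-block of length $\beta+1$ consumes the preceding $PM_U$ together with $\beta+1$ factors $QM_U$. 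Counting the factors of $M_U$ then gives $\sum_{i=0}^r\alpha_i+\sum_{i=1}^r(\beta_i+2)=k$, that is,
\[
  \sum_{i=0}^r\alpha_i+\sum_{i=1}^r\beta_i=k-2r .
\]
In particular $r\le\lfloor k/2\rfloor$. I would verify this bookkeeping on a small case such as $k=2$ with the word $PQP$: the formula should give $g^{\mathsf T}g$, since $e^{\mathsf T}M_UQM_Ue=\norm g^2$. This confirms the count of two factors of $M_U$ per excursion.

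Words with $r=0$ are the all-$P$ word, which contributes $q^k$. For a fixed $r\ge1$, the map from words to tuples $(\alpha_0,\ldots,\alpha_r,\beta_1,\ldots,\beta_r)$ satisfying the length condition is a bijection. This is the step that differs from the cycle case: there is no rotation, so no factor $m/r$ appears and no marking argument is needed. Using \eqref{eq:moments}, we replace each $g^{\mathsf T}A^{\beta_i}g$ by $\int\lambda_i^{\beta_i}\dd\mu(\lambda_i)$ and interchange the finite sum with the integrals. The resulting sum over tuples is exactly the definition \eqref{eq:h-def} with $r+1$ copies of $q$ and the variables $\lambda_1,\ldots,\lambda_r$. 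This gives $h_{k-2r}(\rep q{(r+1)},\lambda_1,\ldots,\lambda_r)$, and summing over $r$ proves \eqref{eq:path-expansion}.

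The only real obstacle is getting the length accounting and the endpoint conventions right. That check is routine once the small-case sanity check above is done.
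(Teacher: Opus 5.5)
Your proposal is correct and takes essentially the same approach as the paper. You expand $e^{\mathsf T}PM_U^kPe$ over linear words with both endpoints $P$, collapse the blocks using $PM_UP=qP$ and $PM_U(QM_U)^{\beta+1}P=(g^{\mathsf T}A^\beta g)P$, and use the fixed-endpoint bijection with tuples $(\alpha_0,\ldots,\alpha_r,\beta_1,\ldots,\beta_r)$ to obtain $h_{k-2r}(\rep q{(r+1)},\lambda_1,\ldots,\lambda_r)$; the only cosmetic difference is that you derive $\sum_i\alpha_i+\sum_i\beta_i=k-2r$ by counting factors of $M_U$ rather than letters of the word.
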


\begin{proof}
  For $k=0$ and $k=1$, the formula gives $\tP0U=e^{\mathsf T}e=1$ and
  $\tP1U=e^{\mathsf T}M_Ue=q$, respectively, with an empty sum over $r$.
  Now let $k\ge2$. Since $Pe=e$ and $e^{\mathsf T}P=e^{\mathsf T}$,
  we have $\tP kU = e^{\mathsf T}PM_U^kPe$.
  Inserting $P+Q=I$ between its $k$ copies of $M_U$ and expanding gives
  \begin{equation}\label{eq:path-projection-expansion}
    \tP kU=
    \sum_{(R_1,\ldots,R_{k-1})\in\{P,Q\}^{k-1}}
      e^{\mathsf T}P M_U R_1M_U\cdots R_{k-1}M_UPe.
  \end{equation}
  Associate each summand in \eqref{eq:path-projection-expansion} with
  the word $PR_1\cdots R_{k-1}P$.
  This word has $k+1$ letters, separated by the $k$ copies of $M_U$.
  We read it from its fixed left endpoint to its fixed right endpoint;
  its first and last letters are separate occurrences of $P$.
  If every letter is $P$, the identity $P M_U P=qP$ gives the summand
  $q^k e^{\mathsf T}Pe=q^k$.

  Now fix the number $r\ge1$ of $Q$-blocks in a word.
  Because the word starts and ends with $P$, it has $r+1$ $P$-blocks,
  alternating with the $r$ $Q$-blocks. Let
  $\alpha_0+1,\ldots,\alpha_r+1$ be the numbers of letters in the
  successive $P$-blocks, and let $\beta_1+1,\ldots,\beta_r+1$ be the
  numbers of letters in the successive $Q$-blocks.  For example, the
  following word has $r=2$:
  \[
    \underbrace{PP}_{\alpha_0=1}\,
    \underbrace{QQ}_{\beta_1=1}\,
    \underbrace{PP}_{\alpha_1=1}\,
    \underbrace{Q}_{\beta_2=0}\,
    \underbrace{PP}_{\alpha_2=1}.
  \]
  Counting the $k+1$ letters gives
  \[
    \sum_{i=0}^r(\alpha_i+1)+\sum_{i=1}^r(\beta_i+1)=k+1,
    \qquad
    \sum_{i=0}^r\alpha_i+\sum_{i=1}^r\beta_i=k-2r.
  \]
  In particular, $r\le\lfloor k/2\rfloor$.
  To evaluate the summand in \eqref{eq:path-projection-expansion}
  associated with this word, use the same identities as in the proof
  of \Cref{lem:cycle-excursions}: each internal step in a $P$-block
  contributes $q$, and the excursion through the $i$th $Q$-block
  contributes $g^{\mathsf T}A^{\beta_i}g$.
  Collapsing these blocks leaves the product of their scalar factors
  times $P$. Since $e^{\mathsf T}Pe=1$, the summand is
  \[
    e^{\mathsf T}P M_U R_1M_U\cdots R_{k-1}M_UPe
    =q^{\alpha_0+\cdots+\alpha_r}
      \prod_{i=1}^rg^{\mathsf T}A^{\beta_i}g.
  \]

  With fixed endpoints, each admissible ordered tuple
  $(\alpha_0,\beta_1,\alpha_1,\ldots,\beta_r,\alpha_r)$ corresponds to exactly one word.
  Combining this correspondence with \eqref{eq:moments} and the definition \eqref{eq:h-def} gives
  \begin{align}
    &\sum_{\substack{(R_1,\ldots,R_{k-1})\in\{P,Q\}^{k-1}\\
                     PR_1\cdots R_{k-1}P
                     \text{ has }r\text{ $Q$-blocks}}}
      e^{\mathsf T}P M_U R_1M_U\cdots R_{k-1}M_UPe\notag\\
    &\quad=\sum_{\substack{\alpha_0,\ldots,\alpha_r,\beta_1,\ldots,\beta_r\ge0\\
                         \sum_{i=0}^r\alpha_i+
                         \sum_{i=1}^r\beta_i=k-2r}}
      q^{\alpha_0+\cdots+\alpha_r}
      \prod_{i=1}^rg^{\mathsf T}A^{\beta_i}g\notag\\
    &\quad=\int\!\cdots\!\int
      \sum_{\substack{\alpha_0,\ldots,\alpha_r,\beta_1,\ldots,\beta_r\ge0\\
                      \sum_{i=0}^r\alpha_i+
                      \sum_{i=1}^r\beta_i=k-2r}}
      q^{\alpha_0+\cdots+\alpha_r}
      \lambda_1^{\beta_1}\cdots\lambda_r^{\beta_r}
      \prod_{i=1}^r\dd\mu(\lambda_i)\notag\\
    &\quad=\int\!\cdots\!\int
      h_{k-2r}(\rep q{(r+1)},\lambda_1,\ldots,\lambda_r)
      \prod_{i=1}^r\dd\mu(\lambda_i).
    \label{eq:path-r-excursions}
  \end{align}
  The $r+1$ exponents $\alpha_0,\ldots,\alpha_r$ account for the $r+1$ copies of $q$; together with $\beta_1,\ldots,\beta_r$, their total degree is $k-2r$.
  Summing \eqref{eq:path-r-excursions} over $r$ and including the word consisting only of $P$ proves \eqref{eq:path-expansion}.
\end{proof}

We now combine the cycle and path formulas to express the difference between the two sides of \eqref{eq:intro-commonality} as polynomial integrals against $\mu$.
The following lemma completes the reduction announced at the start of this section.

\begin{lemma}[Reduction to excursion polynomials]
\label{lem:polynomial-reduction}
  Let $m\ge3$ be odd, and write $p=1-q$. For each integer
  $1\le r\le(m-1)/2$, let
  \begin{equation}\label{eq:pge23-n}
    n=m-2r,
  \end{equation}
  which is a positive odd integer, and define the excursion polynomial,
  symmetric in $\lambda_1,\ldots,\lambda_r$, by
  \begin{align}
    P_{m,r}(q;\lambda_1,\ldots,\lambda_r)
    ={}&\frac mr\Bigl[
      h_n(\rep pr,-\lambda_1,\ldots,-\lambda_r)
      +h_n(\rep qr,\lambda_1,\ldots,\lambda_r)
    \Bigr]\notag\\
    &-h_{n-1}(\rep q{(r+1)},\lambda_1,\ldots,\lambda_r).
    \label{eq:Pmr}
  \end{align}
  Then the sum of integrals
  \begin{equation}\label{eq:Phi-expansion}
    \Phi_m:=\sum_{r=1}^{(m-1)/2}
      \int\!\cdots\!\int
      P_{m,r}(q;\lambda_1,\ldots,\lambda_r)
      \prod_{i=1}^r\dd\mu(\lambda_i)
  \end{equation}
  satisfies
  \[
    \Phi_m=\tC mW+\tC mU-p^m-q^m
    -\bigl[\tP{m-1}U-q^{m-1}\bigr].
  \]
\end{lemma}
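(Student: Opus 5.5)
This follows by adding the two cycle expansions of \Cref{lem:cycle-excursions} and subtracting the path expansion of \Cref{lem:path-excursions} with $k=m-1$. The plan is to line these up term by term in the excursion number $r$.

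\textbf{Step 1.} Add \eqref{eq:cycle-U} and \eqref{eq:cycle-W}. The constant terms give $p^m+q^m$, and the terms $\Tr(A^m)$ and $-\Tr(A^m)$ cancel exactly; this is where oddness of $m$ has already been used inside \Cref{lem:cycle-excursions}. What remains is
\[
  \tC mW+\tC mU-p^m-q^m=\sum_{r=1}^{(m-1)/2}\frac mr\int\!\cdots\!\int\Bigl[h_{n}(\rep pr,-\lambda_1,\ldots,-\lambda_r)+h_{n}(\rep qr,\lambda_1,\ldots,\lambda_r)\Bigr]\prod_{i=1}^r\dd\mu(\lambda_i),
\]
with $n=m-2r$.

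\textbf{Step 2.} Apply \eqref{eq:path-expansion} with $k=m-1$. Since $m$ is odd, $\lfloor (m-1)/2\rfloor=(m-1)/2$, so the range of $r$ is the same as in Step 1. Moreover $k-2r=m-1-2r=n-1$, so
\[
  \tP{m-1}U-q^{m-1}=\sum_{r=1}^{(m-1)/2}\int\!\cdots\!\int h_{n-1}(\rep q{(r+1)},\lambda_1,\ldots,\lambda_r)\prod_{i=1}^r\dd\mu(\lambda_i).
\]

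\textbf{Step 3.} Subtract the display in Step 2 from the display in Step 1. Both are finite sums over the same $r$, and each integrand is a polynomial integrated against the finite measure $\mu^{\otimes r}$, so the integrals can be combined by linearity for each $r$. The combined integrand for each $r$ is exactly $P_{m,r}$ from \eqref{eq:Pmr}, which gives $\Phi_m$.

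For the remaining side claims: $n=m-2r$ is odd because $m$ is odd, and positive because $r\le(m-1)/2$ gives $n\ge1$. Symmetry of $P_{m,r}$ in $\lambda_1,\ldots,\lambda_r$ holds because each complete homogeneous polynomial is symmetric in all of its arguments.

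The only delicate point is bookkeeping: checking that the index ranges and the degrees ($n$ versus $n-1$, and $r$ versus $r+1$ copies of $q$) align exactly. There is no genuine analytic obstacle.
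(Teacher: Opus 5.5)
Your proposal is correct and follows the paper's proof essentially verbatim: add \eqref{eq:cycle-U} and \eqref{eq:cycle-W} so that the $\Tr(A^m)$ terms cancel, then subtract \eqref{eq:path-expansion} with $k=m-1$. For each $r$ the index ranges agree and the degree is $(m-1)-2r=n-1$, so linearity for each fixed $r$ gives the integrand $P_{m,r}$.
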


\begin{proof}
  Adding the cycle expansions \eqref{eq:cycle-U} and
  \eqref{eq:cycle-W} cancels $\Tr(A^m)$ with $-\Tr(A^m)$ and gives the
  following identity, with $n=m-2r$ in each summand:
  \[
    \begin{aligned}
      \tC mW+\tC mU
      ={}p^m+q^m
      &+\sum_{r=1}^{(m-1)/2}\frac mr
      \int\!\cdots\!\int
        h_n(\rep pr,-\lambda_1,\ldots,-\lambda_r)
      \prod_{i=1}^r\dd\mu(\lambda_i)\\
      &+\sum_{r=1}^{(m-1)/2}\frac mr
      \int\!\cdots\!\int
        h_n(\rep qr,\lambda_1,\ldots,\lambda_r)
      \prod_{i=1}^r\dd\mu(\lambda_i).
    \end{aligned}
  \]
  For $k=m-1$, the path expansion \eqref{eq:path-expansion} uses the
  same range $1\le r\le(m-1)/2$, and its polynomial degree is
  $(m-1)-2r=n-1$. Thus,
  \[
    \tP{m-1}U=q^{m-1}+
    \sum_{r=1}^{(m-1)/2}
      \int\!\cdots\!\int
      h_{n-1}(\rep q{(r+1)},\lambda_1,\ldots,\lambda_r)
      \prod_{i=1}^r\dd\mu(\lambda_i).
  \]
  For each fixed number $r$ of excursions, the two cycle integrals and
  the path integral use the same product measure
  $\prod_{i=1}^r\dd\mu(\lambda_i)$.
  Combining the two cycle contributions, each with coefficient $m/r$, and subtracting the path contribution gives the integrand $P_{m,r}(q;\lambda_1,\ldots,\lambda_r)$ by the definition \eqref{eq:Pmr}.
  Therefore,
  \begin{equation}\label{eq:commonality-gap}
    \tC mW+\tC mU-\tP{m-1}U
    =p^m+q^m-q^{m-1}+\Phi_m.
  \end{equation}
  By rearranging \eqref{eq:commonality-gap}, we obtain the desired expression for $\Phi_m$.
\end{proof}

Since $\mu$ is nonnegative and supported on $[-1/2,1/2]$, it suffices to
prove, for every $1\le r\le(m-1)/2$, that
\[
  P_{m,r}(q;\lambda_1,\ldots,\lambda_r)\ge0
  \qquad\text{for all }(\lambda_1,\ldots,\lambda_r)\in[-1/2,1/2]^r,
\]
where $q=1-p\in[0,1/3]$.
We prove the nonnegativity of these polynomials in \Cref{sec:short-cycles,sec:excursion-positivity}.

\section{The cycles \texorpdfstring{$C_3,C_5,C_7$}{C3, C5, C7}}
\label{sec:short-cycles}

In this section, we explicitly compute the excursion polynomials for $C_3$, $C_5$, and $C_7$.
The resulting positivity proves the inequalities in \Cref{thm:main,thm:commonality} for all $p\ge1/2$; we give this deduction together with the proof of the main inequalities at the end of \Cref{sec:excursion-positivity}.

\begin{proposition}\label{prop:short-cycles}
  Suppose that $0\le q\le1/2$ and set $p=1-q$. For all $m\in\{3,5,7\}$,
  $1\le r\le(m-1)/2$, and $\lambda_1,\ldots,\lambda_r\in\R$, we have
  \[
    P_{m,r}(q;\lambda_1,\ldots,\lambda_r)\ge0.
  \]
\end{proposition}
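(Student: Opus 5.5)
The plan is a direct symbolic computation: expand each $P_{m,r}$ from \eqref{eq:Pmr} as an explicit polynomial in $\lambda_1,\ldots,\lambda_r$ with coefficients polynomial in $q$ (substituting $p=1-q$), and then certify nonnegativity on all of $\R^r$ for $q\in[0,1/2]$.

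A structural observation keeps the degrees small. Let $n=m-2r$, which is odd. The purely-$\lambda$ part of $h_n(\rep pr,-\lambda_1,\ldots,-\lambda_r)$ is $h_n(-\lambda)=-h_n(\lambda)$, and this cancels exactly against the purely-$\lambda$ part of $h_n(\rep qr,\lambda)$. Hence $P_{m,r}$ has degree at most $n-1$ in the $\lambda$'s.

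The cases split as follows.
\begin{itemize}
\item When $n=1$ (that is, $r=(m-1)/2$), the polynomial is a constant: $P_{m,r}=\frac mr\,r(p+q)-1=m-1>0$.
\item For $m=5$, $r=1$, a short expansion gives
\[
  P_{5,1}=4\lambda^2-(5-8q)\lambda+5-15q+12q^2 .
\]
Its discriminant is $-128q^2+160q-55$. This is increasing on $[0,1/2]$ and equals $-7$ at $q=1/2$, so it is negative there, and the quadratic is positive.
\item For $m=7$, $r=2$ ($n=3$), $P_{7,2}$ is a quadratic in $(\lambda_1,\lambda_2)$. I would write it using $s=\lambda_1+\lambda_2$ and $\lambda_1\lambda_2$, or equivalently as $\frac12 x^{\mathsf T}Hx+b^{\mathsf T}x+c$. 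Then I would check two things: the Hessian $H$ is positive definite (by symmetry its eigenvectors are $(1,1)$ and $(1,-1)$), and the minimum value $c-\frac12 b^{\mathsf T}H^{-1}b$ is positive for $q\in[0,1/2]$. By symmetry the minimiser lies on $\lambda_1=\lambda_2$, so this reduces to a one-variable quadratic, handled by a discriminant argument as for $m=5$.
\end{itemize}

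The main obstacle is $m=7$, $r=1$ ($n=5$). Here $P_{7,1}$ is a quartic in a single variable $\lambda$, with leading coefficient $7(p+q)-1=6>0$ coming from the $\lambda^4$ terms. I would try two certificates in turn. First, I would look for a sum-of-squares certificate
\[
  P_{7,1}=6\bigl(\lambda^2+a(q)\lambda+b(q)\bigr)^2+c(q)\bigl(\lambda+d(q)\bigr)^2+e(q),
\]
choosing $a$ and $b$ to match the $\lambda^3$ and $\lambda^2$ coefficients, and then verifying $c(q)\ge0$ and $e(q)\ge0$ as one-variable polynomial inequalities on $[0,1/2]$. If no clean certificate appears, the fallback is a Sturm or discriminant argument in $\lambda$, uniform in $q$: show that the quartic's discriminant keeps a fixed sign on $[0,1/2]$ and that $P_{7,1}>0$ at one value of $q$ (for example $q=0$, where everything is explicit). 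Continuity in $q$ then rules out a real root appearing anywhere in the range.

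The remaining checks are all univariate inequalities in $q$ on $[0,1/2]$, which I would close by explicit factorisation or by checking endpoints together with monotonicity.
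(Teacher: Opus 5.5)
Your plan is correct and matches the paper's proof: the $n=1$ cases reduce to the constant $m-1$, $P_{5,1}$ and $P_{7,2}$ are positive-definite quadratics handled by completing squares (your discriminant and symmetric-convexity arguments are equivalent), and $P_{7,1}$ is certified by exactly your ansatz. The paper takes $a=q-7/12$ and $b=0$, which gives $c=D(q)/24$ and $e=N(q)/D(q)$, and checks $D,N>0$ by substituting $y=1/2-q$. One correction to your description of $P_{7,1}$: the $\lambda^2$ coefficient has to be split between $6(\lambda^2+a\lambda+b)^2$ and $c(\lambda+d)^2$, which leaves one free parameter that the paper fixes as $b=0$; if $a$ and $b$ alone match the $\lambda^2$ coefficient, then $c=0$ and a nonzero linear term remains that nothing absorbs.
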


\begin{proof}
  For $m=3$, only $r=1$ occurs, and
  \[
    \begin{aligned}
      P_{3,1}(q;\lambda)
      &=3\bigl[h_1(p,-\lambda)+h_1(q,\lambda)\bigr]
        -h_0(q,q,\lambda)\\
      &=3\bigl[(p-\lambda)+(q+\lambda)\bigr]-1=2.
    \end{aligned}
  \]

  For $m=5$ and $r=1$, we have $n=3$, and hence
  \[
    \begin{aligned}
      P_{5,1}(q;\lambda)
      ={}&5\bigl[h_3(p,-\lambda)+h_3(q,\lambda)\bigr]
          -h_2(q,q,\lambda)\\
      ={}&5\bigl[(p^3+q^3)+(q^2-p^2)\lambda
                  +(p+q)\lambda^2\bigr]
         -(3q^2+2q\lambda+\lambda^2)\\
      ={}&5\bigl[(1-3q+3q^2)+(2q-1)\lambda+\lambda^2\bigr]
         -(3q^2+2q\lambda+\lambda^2)\\
      ={}&4\lambda^2+(8q-5)\lambda+12q^2-15q+5.
    \end{aligned}
  \]
  Completing squares gives
  \[
    P_{5,1}(q;\lambda)
    =4\left(\lambda+q-\frac58\right)^2
     +8\left(q-\frac58\right)^2+\frac5{16}>0.
  \]
  For $r=2$, we have $n=1$, so
  \[
    \begin{aligned}
      P_{5,2}(q;\lambda_1,\lambda_2)
      ={}&\frac52\bigl[
        h_1(p,p,-\lambda_1,-\lambda_2)
        +h_1(q,q,\lambda_1,\lambda_2)\bigr]\\
        &-h_0(q,q,q,\lambda_1,\lambda_2)\\
      ={}&\frac52\bigl[(2p-\lambda_1-\lambda_2)
                       +(2q+\lambda_1+\lambda_2)\bigr]-1=4.
    \end{aligned}
  \]
  For $m=7$ and $r=1$, we have $n=5$.  Therefore,
  \[
    \begin{aligned}
      P_{7,1}(q;\lambda)
      ={}&7\bigl[h_5(p,-\lambda)+h_5(q,\lambda)\bigr]
          -h_4(q,q,\lambda)\\
      ={}&7\bigl[p^5+q^5+(q^4-p^4)\lambda
          +(p^3+q^3)\lambda^2+(q^2-p^2)\lambda^3
          +(p+q)\lambda^4\bigr]\\
         &-(5q^4+4q^3\lambda+3q^2\lambda^2
             +2q\lambda^3+\lambda^4)\\
      ={}&7\bigl[(1-5q+10q^2-10q^3+5q^4)
          +(4q^3-6q^2+4q-1)\lambda\\
         &\qquad +(1-3q+3q^2)\lambda^2
          +(2q-1)\lambda^3+\lambda^4\bigr]\\
         &-(5q^4+4q^3\lambda+3q^2\lambda^2
             +2q\lambda^3+\lambda^4)\\
      ={}&6\lambda^4+(12q-7)\lambda^3
          +(18q^2-21q+7)\lambda^2\\
         &+(24q^3-42q^2+28q-7)\lambda
          +30q^4-70q^3+70q^2-35q+7.
    \end{aligned}
  \]
  Define
  \[
    \begin{aligned}
      D(q)&=288q^2-336q+119,\\
      C(q)&=24q^3-42q^2+28q-7,\\
      N(q)&=5184q^6-18144q^5+28602q^4-25802q^3 +13874q^2-4165q+539.
    \end{aligned}
  \]
  Direct expansion gives
  \[
    P_{7,1}(q;\lambda)=
      6\left(\lambda^2+\left(q-\frac7{12}\right)\lambda\right)^2+\frac{D(q)}{24}
      \left(\lambda+\frac{12C(q)}{D(q)}\right)^2
      +\frac{N(q)}{D(q)}.
  \]
  With $y=1/2-q\ge0$,
  \[
    D(q)=288y^2+48y+23>0
  \]
  and
  \[
    8N(q)=41472y^6+20736y^5+21456y^4+7984y^3+2032y^2+316y+11>0.
  \]
  Hence $P_{7,1}(q;\lambda)>0$.

  For $r=2$, we have $n=3$, and
  \[
    \begin{aligned}
      P_{7,2}(q;\lambda_1,\lambda_2)
      ={}&\frac72\bigl[
        h_3(p,p,-\lambda_1,-\lambda_2)
        +h_3(q,q,\lambda_1,\lambda_2)\bigr]\\
        &-h_2(q,q,q,\lambda_1,\lambda_2)\\
      ={}&\frac72\bigl[4(p^3+q^3)
        +3(q^2-p^2)(\lambda_1+\lambda_2)
        +2(\lambda_1^2+\lambda_1\lambda_2+\lambda_2^2)\bigr]\\
        &-\bigl[6q^2+3q(\lambda_1+\lambda_2)
          +(\lambda_1^2+\lambda_1\lambda_2+\lambda_2^2)\bigr]\\
      ={}&6(\lambda_1^2+\lambda_1\lambda_2+\lambda_2^2)
        +\left(18q-\frac{21}{2}\right)(\lambda_1+\lambda_2)\\
        &+36q^2-42q+14.
    \end{aligned}
  \]
  Completing squares gives
  \[
      P_{7,2}(q;\lambda_1,\lambda_2)=
      \left(\frac32(\lambda_1-\lambda_2)^2
      +\frac92\left(\lambda_1+\lambda_2+2q-\frac76\right)^2
      +18\left(q-\frac7{12}\right)^2+\frac74\right)>0.
  \]
  Finally, for $r=3$, we have $n=1$, so
  \[
    \begin{aligned}
      P_{7,3}(q;\lambda_1,\lambda_2,\lambda_3)
      ={}&\frac73\bigl[
        h_1(p,p,p,-\lambda_1,-\lambda_2,-\lambda_3)
        +h_1(q,q,q,\lambda_1,\lambda_2,\lambda_3)\bigr]-1\\
      ={}&\frac73\bigl[(3p-\lambda_1-\lambda_2-\lambda_3)
        +(3q+\lambda_1+\lambda_2+\lambda_3)\bigr]-1
      \\
      ={}&6.
    \end{aligned}
  \]
This concludes the proof.
\end{proof}

\section{Positivity of the excursion polynomials}
\label{sec:excursion-positivity}

We now complete the proof of the strengthened extended commonality inequality
in \Cref{thm:commonality} and hence the odd-cycle Goodman bound in
\Cref{thm:main}.
The excursion expansion in \Cref{sec:excursions} reduces this task to showing that
\begin{equation}\label{eq:pge23-positivity-goal}
  P_{m,r}(q;\lambda_1,\ldots,\lambda_r)\ge0
\end{equation}
whenever $0\le q\le1/3$, $m\ge3$ is odd, $1\le r\le(m-1)/2$, and each
$\lambda_i\in[-1/2,1/2]$.
We prove \eqref{eq:pge23-positivity-goal} in two steps.
First, Dirichlet averaging reduces the inequality to the case in which all
spectral variables $\lambda_1,\ldots,\lambda_r$ equal a common value $\lambda$.
Second, we prove that the resulting diagonal polynomial is nonnegative,
beginning with a representation through a Beta expectation.
This representation gives nonnegativity directly when $\lambda\le0$.
For $\lambda>0$, a further integral representation reduces the problem to
comparing odd and even moments of $zY-1$, where $Y\sim\GammaLaw(r,1)$
and $z\ge0$.
Here, for $a>0$, $\GammaLaw(a,1)$ denotes the Gamma distribution with shape $a$ and rate $1$.
The odd-to-even Gamma moment bound in \Cref{lem:pge23-gamma-moment},
proved in \Cref{app:gamma-moment}, supplies the required comparison.
Together, the diagonal reduction and the proof of diagonal positivity establish
\eqref{eq:pge23-positivity-goal}, completing the proofs of
\Cref{thm:commonality,thm:main}.

\subsection{Dirichlet averaging and diagonal reduction}
\label{subsec:dirichlet-reduction}

We begin with an averaging identity for complete homogeneous symmetric
polynomials. It will be used both in the diagonal reduction and in the
Beta representation.

\begin{lemma}\label{lem:pge23-dirichlet-moment}
  Let $k\ge1$ and $j\ge0$ be integers, let $x_1,\ldots,x_k\in\R$, and
  let $(\Theta_1,\ldots,\Theta_k)$ be uniformly distributed on the simplex
  \[
    \Theta_i\ge0,\qquad \sum_{i=1}^k\Theta_i=1.
  \]
  Equivalently, this vector has the $\operatorname{Dirichlet}(1,\ldots,1)$
  distribution.
  Then the complete homogeneous symmetric polynomial $h_j$, defined in
  \eqref{eq:h-def}, admits the representation
  \begin{equation}\label{eq:pge23-dirichlet-moment}
    h_j(x_1,\ldots,x_k)
    =\binom{j+k-1}{k-1}
    \E\left[\left(\sum_{i=1}^k\Theta_i x_i\right)^j\right].
  \end{equation}
\end{lemma}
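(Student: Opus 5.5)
The plan is to expand the right-hand side of \eqref{eq:pge23-dirichlet-moment} by the multinomial theorem and compute the mixed Dirichlet moments explicitly. For nonnegative integers $\gamma_1,\ldots,\gamma_k$ with $\sum_i\gamma_i=j$,
\[
  \E\bigl[\Theta_1^{\gamma_1}\cdots\Theta_k^{\gamma_k}\bigr]
  =\frac{\Gamma(k)\prod_{i=1}^k\Gamma(\gamma_i+1)}{\Gamma(j+k)}
  =\frac{(k-1)!\,\gamma_1!\cdots\gamma_k!}{(j+k-1)!}.
\]
This is the standard moment formula for the $\operatorname{Dirichlet}(1,\ldots,1)$ law, whose density on the simplex is the constant $(k-1)!$. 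I would justify it either by the Dirichlet integral $\int_{\text{simplex}}\prod_i t_i^{a_i-1}=\prod_i\Gamma(a_i)/\Gamma(\sum_i a_i)$, or by writing $\Theta_i=E_i/\sum_l E_l$ with $E_i$ independent standard exponentials. In the second route, $\sum_l E_l$ is independent of the normalised vector and has the $\GammaLaw(k,1)$ law. Hence $\prod_i\gamma_i!=\E\prod_i E_i^{\gamma_i}=\E[(\sum_l E_l)^j]\,\E\prod_i\Theta_i^{\gamma_i}$, and $\E[(\sum_l E_l)^j]=(j+k-1)!/(k-1)!$.

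With this formula, the multinomial expansion gives
\[
  \E\Bigl[\Bigl(\sum_i\Theta_ix_i\Bigr)^j\Bigr]
  =\sum_{\gamma_1+\cdots+\gamma_k=j}\frac{j!}{\gamma_1!\cdots\gamma_k!}\,
   \frac{(k-1)!\,\gamma_1!\cdots\gamma_k!}{(j+k-1)!}\,x_1^{\gamma_1}\cdots x_k^{\gamma_k}.
\]
The factorials $\gamma_i!$ cancel, leaving the uniform coefficient $j!(k-1)!/(j+k-1)!=\binom{j+k-1}{k-1}^{-1}$ on every monomial. By definition \eqref{eq:h-def}, the remaining sum is exactly $h_j(x_1,\ldots,x_k)$. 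Multiplying through by $\binom{j+k-1}{k-1}$ gives the claim.

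The edge cases are immediate. For $j=0$, both sides equal $1$. For $k=1$, $\Theta_1\equiv1$ and both sides equal $x_1^j$.

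The only step requiring care is the Dirichlet moment formula, which is classical. The exponential-normalisation argument is the cleanest self-contained route, since the independence of $\sum_l E_l$ and the normalised vector is a standard Gamma fact.
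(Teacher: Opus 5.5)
Your proposal is correct and follows essentially the same route as the paper: expand the power by the multinomial theorem, use the Dirichlet moment formula $(k-1)!\,\gamma_1!\cdots\gamma_k!/(j+k-1)!$, and note that the factorials cancel to give the uniform coefficient $\binom{j+k-1}{k-1}^{-1}$. The only addition is your exponential-normalisation justification of the moment formula, which the paper simply quotes as standard; that justification is also correct.
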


\begin{proof}
  Write $S=\sum_{i=1}^k\Theta_i x_i$. The multinomial theorem gives
  \[
    S^j=\Bigl(\sum_{i=1}^k\Theta_i x_i\Bigr)^j
    =\sum_{\alpha_1+\cdots+\alpha_k=j}
    \frac{j!}{\alpha_1!\cdots\alpha_k!}
    \prod_{i=1}^k\Theta_i^{\alpha_i}x_i^{\alpha_i},
  \]
  where the sum is over nonnegative integer exponents.
  The Dirichlet moments satisfy
  \[
    \E\left[\prod_{i=1}^k\Theta_i^{\alpha_i}\right]
    =\frac{(k-1)!\,\alpha_1!\cdots\alpha_k!}{(j+k-1)!},
    \qquad \alpha_1+\cdots+\alpha_k=j.
  \]
  Taking expectations therefore gives
  \begin{align*}
    \E\left[S^j\right]
    &=\sum_{\alpha_1+\cdots+\alpha_k=j}
    \frac{j!}{\alpha_1!\cdots\alpha_k!}
    \cdot\frac{(k-1)!\,\alpha_1!\cdots\alpha_k!}{(j+k-1)!}
    \prod_{i=1}^k x_i^{\alpha_i}\\
    &=\frac{j!\,(k-1)!}{(j+k-1)!}
    \sum_{\alpha_1+\cdots+\alpha_k=j}\prod_{i=1}^k x_i^{\alpha_i}
    =\frac{h_j(x_1,\ldots,x_k)}{\binom{j+k-1}{k-1}}.
  \end{align*}
  Multiplying by $\binom{j+k-1}{k-1}$ proves
  \eqref{eq:pge23-dirichlet-moment}.
\end{proof}

Fix an odd integer $m\ge3$ and $1\le r\le(m-1)/2$. Then, $n=m-2r$ is a positive odd integer.
To carry out the first step, define the diagonal polynomial
\begin{equation}\label{eq:pge23-Ptilde}
  \widetilde P_{m,r}(q,\lambda)
  =P_{m,r}(q;\lambda,\ldots,\lambda).
\end{equation}

\begin{lemma}[Diagonal reduction by Dirichlet averaging]\label{lem:pge23-dirichlet}
  Let $q,\lambda_1,\ldots,\lambda_r\in\R$, with $p=1-q$ in
  the definition \eqref{eq:Pmr} of $P_{m,r}$.
  Let $(\Theta_1,\ldots,\Theta_r)$ be uniformly distributed on the simplex
  \[
    \Theta_i\ge0,\qquad \sum_{i=1}^r\Theta_i=1.
  \]
  Then,
  \begin{equation}\label{eq:pge23-dirichlet-identity}
    P_{m,r}(q;\lambda_1,\ldots,\lambda_r)
    =\E\left[\widetilde P_{m,r}
    \left(q,\sum_{i=1}^r\Theta_i\lambda_i\right)\right].
  \end{equation}
  Consequently,
  \begin{equation*}
    \min_{\lambda_i\in[-1/2,1/2]}
    P_{m,r}(q;\lambda_1,\ldots,\lambda_r)
    =\min_{\lambda\in[-1/2,1/2]}\widetilde P_{m,r}(q,\lambda).
  \end{equation*}
\end{lemma}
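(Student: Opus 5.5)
The identity \eqref{eq:pge23-dirichlet-identity} is linear in the three complete homogeneous polynomials appearing in \eqref{eq:Pmr}, so I will prove a single averaging identity for each of them. The general form needed is the following. For a constant $c$, an integer $s\ge0$, and signs $\varepsilon\in\{\pm1\}$, write $L=\sum_{i=1}^r\Theta_i\lambda_i$. I claim that
\[
  h_d(\rep cs,\varepsilon\lambda_1,\ldots,\varepsilon\lambda_r)
  =\E\bigl[h_d(\rep cs,\rep{(\varepsilon L)}r)\bigr].
\]
Applying this with $(c,s,\varepsilon,d)$ equal to $(p,r,-1,n)$, $(q,r,+1,n)$ and $(q,r+1,+1,n-1)$, and combining with the coefficients $m/r$, $m/r$ and $-1$ from \eqref{eq:Pmr}, gives \eqref{eq:pge23-dirichlet-identity}. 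This works because $\widetilde P_{m,r}(q,L)$ is exactly $P_{m,r}$ evaluated with all spectral variables equal to $L$.

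To prove the claim, I first split variables in the definition \eqref{eq:h-def}. Grouping exponents by the constant block and the $\lambda$-block gives
\[
  h_d(\rep cs,y_1,\ldots,y_r)=\sum_{a+b=d}h_a(\rep cs)\,h_b(y_1,\ldots,y_r).
\]
The first factor does not depend on the $y$'s. Hence, by linearity of expectation, it suffices to show that
\[
  h_b(\varepsilon\lambda_1,\ldots,\varepsilon\lambda_r)=\E\bigl[h_b(\rep{(\varepsilon L)}r)\bigr]
\]
for every $b\ge0$. Since $h_b$ is homogeneous of degree $b$, the sign contributes the same factor $\varepsilon^b$ on both sides, so I may take $\varepsilon=1$. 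Counting monomials gives
\[
  h_b(\rep Lr)=\binom{b+r-1}{r-1}L^b.
\]
Then \Cref{lem:pge23-dirichlet-moment}, applied with $k=r$, $j=b$ and $x_i=\lambda_i$, states exactly that $\binom{b+r-1}{r-1}\E[L^b]=h_b(\lambda_1,\ldots,\lambda_r)$. This proves the claim. The case $r=1$ is trivial, since then $\Theta_1=1$.

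For the consequence, fix $\lambda_i\in[-1/2,1/2]$. Then $L$ is a convex combination of the $\lambda_i$, so $L\in[-1/2,1/2]$ almost surely. Hence the right-hand side of \eqref{eq:pge23-dirichlet-identity} is an average of values of $\widetilde P_{m,r}(q,\cdot)$ on $[-1/2,1/2]$, and is at least the minimum there. That minimum exists because $\widetilde P_{m,r}(q,\cdot)$ is a polynomial on a compact interval. This gives $\min P_{m,r}\ge\min\widetilde P_{m,r}$. The reverse inequality follows by taking $\lambda_1=\cdots=\lambda_r=\lambda$ at a minimiser of $\widetilde P_{m,r}(q,\cdot)$, using the definition \eqref{eq:pge23-Ptilde}.

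I expect no real obstacle here. The only point requiring care is the bookkeeping: the constants appear with multiplicity $r+1$ in the path term but $r$ in the cycle terms, and the averaging must act only on the $\lambda$-block. The splitting step handles both, since the number $s$ of repeated constants plays no role in the argument.
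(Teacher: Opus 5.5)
Your proposal is correct and follows essentially the paper's argument: split each $h_d$ into the block of repeated constants and the $\lambda$-block (the paper uses generating functions, you group exponents directly), then use the Dirichlet moment lemma in the form $h_b(\lambda_1,\ldots,\lambda_r)=\binom{b+r-1}{r-1}\E[L^b]=\E[h_b(\rep Lr)]$, and deduce the minimum statement from $L\in[-1/2,1/2]$ together with the diagonal specialisation. Applying the identity to each of the three terms and using linearity, rather than first collecting the combined coefficients $c_j$ of $h_j(\lambda_1,\ldots,\lambda_r)$, is only a cosmetic difference.
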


\begin{proof}
  Write $S=\sum_{i=1}^r\Theta_i\lambda_i$.
  Setting all $r$ variables equal in the definition \eqref{eq:h-def}
  of the complete homogeneous symmetric polynomial $h_j$ gives
  \begin{equation}\label{eq:pge23-h-diagonal}
    h_j(\lambda,\ldots,\lambda)=\binom{j+r-1}{r-1}\lambda^j,
  \end{equation}
  the binomial coefficient being the number of monomials of degree $j$ in $r$ variables.

  It remains to express the terms of $P_{m,r}$ through
  $h_j(\lambda_1,\ldots,\lambda_r)$.
  The generating function of the $h_d$ is
  \begin{equation}\label{eq:pge23-h-generating}
    \sum_{d\ge0}h_d(y_1,\ldots,y_k)z^d
    =\prod_{i=1}^k\frac{1}{1-y_iz}.
  \end{equation}
  Thus, the $k$ equal variables and the remaining ones give separate factors:
  \[
    \begin{aligned}
      \sum_{d\ge0}h_d(a^{\times k},b_1,\ldots,b_r)z^d
      &=\frac{1}{(1-az)^k}\prod_{i=1}^r\frac{1}{1-b_iz}\\
      &=\Bigl(\sum_{s\ge0}\binom{s+k-1}{k-1}a^sz^s\Bigr)
        \Bigl(\sum_{j\ge0}h_j(b_1,\ldots,b_r)z^j\Bigr).
    \end{aligned}
  \]
  Comparing the coefficients of $z^d$ gives
  \begin{equation}\label{eq:pge23-h-split}
    h_d(a^{\times k},b_1,\ldots,b_r)
    =\sum_{j=0}^d
    \binom{d-j+k-1}{k-1}a^{d-j}h_j(b_1,\ldots,b_r).
  \end{equation}
  Apply this identity to the three terms of $P_{m,r}$: to the first two
  with $k=r$ and $a=p$ or $q$, and to the third with $k=r+1$ and $a=q$.
  Since $h_j$ is homogeneous of degree $j$,
  $h_j(-\lambda_1,\ldots,-\lambda_r)=(-1)^jh_j(\lambda_1,\ldots,\lambda_r)$.
  Therefore,
  \begin{align}
    P_{m,r}(q;\lambda_1,\ldots,\lambda_r)
    ={}&\frac {m}{r}\sum_{j=0}^{n}\binom{n-j+r-1}{r-1}
    \bigl((-1)^jp^{n-j}+q^{n-j}\bigr)
    h_j(\lambda_1,\ldots,\lambda_r) \notag\\
    &-\sum_{j=0}^{n-1}\binom{n-1-j+r}{r}q^{n-1-j}
    h_j(\lambda_1,\ldots,\lambda_r). \label{eq:pge23-P-in-h}
  \end{align}

  Write the expansion \eqref{eq:pge23-P-in-h} as
  \[
    P_{m,r}(q;\lambda_1,\ldots,\lambda_r)
    =\sum_{j=0}^nc_j\,h_j(\lambda_1,\ldots,\lambda_r),
  \]
  where $c_j$ denotes the combined coefficient of $h_j$; it does not
  depend on the $\lambda_i$.
  Setting $\lambda_1=\cdots=\lambda_r=\lambda$ and using
  \eqref{eq:pge23-h-diagonal},
  \begin{equation*}
    \widetilde P_{m,r}(q,\lambda)
    =\sum_{j=0}^nc_j\binom{j+r-1}{r-1}\lambda^j.
  \end{equation*}
  On the other hand, \Cref{lem:pge23-dirichlet-moment}, applied with
  $k=r$ and $x_i=\lambda_i$, gives
  \begin{align*}
    P_{m,r}(q;\lambda_1,\ldots,\lambda_r)
    &=\sum_{j=0}^nc_j\binom{j+r-1}{r-1}\E\left[S^j\right]\\
    &=\E\left[\sum_{j=0}^nc_j\binom{j+r-1}{r-1}S^j\right]\\
    &=\E\left[\widetilde P_{m,r}(q,S)\right],
  \end{align*}
  the last equality being $\widetilde P_{m,r}$ with $\lambda$ replaced by $S$.

  For the second claim, suppose that the $\lambda_i$ lie in
  $[-1/2,1/2]$. Then, every convex combination $S$ lies in the same
  interval, so the right-hand side of
  \eqref{eq:pge23-dirichlet-identity} is at least
  $\min_{\lambda\in[-1/2,1/2]}\widetilde P_{m,r}(q,\lambda)$.
  Taking the minimum over $\lambda_1,\ldots,\lambda_r$ shows that the
  minimum of $P_{m,r}$ on $[-1/2,1/2]^r$ is at least this diagonal minimum.
  Conversely, setting $\lambda_1=\cdots=\lambda_r=\lambda$ attains each
  diagonal value, so the minimum of $P_{m,r}$ on $[-1/2,1/2]^r$ is at most
  the diagonal minimum. The two minima are therefore equal.
\end{proof}

By \Cref{lem:pge23-dirichlet}, it now suffices to prove
\begin{equation*}
  \widetilde P_{m,r}(q,\lambda)\ge0
\end{equation*}
for
\[
  0\le q\le\frac{1}{3},
  \quad 1\le r\le\frac{m-1}{2},
  \quad -\frac{1}{2}\le\lambda\le\frac{1}{2}.
\]

\subsection{Beta representation and the nonpositive case}
\label{subsec:beta-representation}

We next obtain a Beta representation for complete homogeneous symmetric
polynomials whose variables take two repeated values. This follows from the
Dirichlet representation by grouping the weights corresponding to each value.
For $r\ge1$, the notation $X\sim\BetaLaw(r,r)$ means that $X$ has probability density function
\[
  \frac{\Gamma(2r)}{\Gamma(r)^2}x^{r-1}(1-x)^{r-1},
  \qquad 0<x<1,
\]
and zero outside this interval.

\begin{lemma}\label{lem:pge23-beta-representation}
  Let $r\ge1$ and $j\ge0$ be integers, let $a,b\in\R$, and let
  $X\sim\BetaLaw(r,r)$. Then
  \begin{equation}\label{eq:pge23-beta-h}
    h_j(a^{\times r},b^{\times r})
    =\binom{j+2r-1}{2r-1}
    \E\left[\bigl(aX+b(1-X)\bigr)^j\right].
  \end{equation}
\end{lemma}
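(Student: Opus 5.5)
Apply \Cref{lem:pge23-dirichlet-moment} with $k=2r$ variables $x_1=\cdots=x_r=a$ and $x_{r+1}=\cdots=x_{2r}=b$. This gives
\[
  h_j(a^{\times r},b^{\times r})
  =\binom{j+2r-1}{2r-1}\E\bigl[(aX+b(1-X))^j\bigr],
  \qquad X=\sum_{i=1}^r\Theta_i,
\]
where $(\Theta_1,\ldots,\Theta_{2r})$ is uniform on the $(2r-1)$-simplex. Here we used $\sum_{i>r}\Theta_i=1-X$. It therefore remains only to identify the law of $X$ as $\BetaLaw(r,r)$.

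For this we use the aggregation property of the Dirichlet distribution: summing disjoint groups of coordinates of a $\operatorname{Dirichlet}(\alpha_1,\ldots,\alpha_k)$ vector gives a Dirichlet vector whose parameters are the group sums. Grouping the first $r$ and the last $r$ coordinates then gives $(X,1-X)\sim\operatorname{Dirichlet}(r,r)$, that is, $X\sim\BetaLaw(r,r)$.

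To keep the argument self-contained, we can justify this via the Gamma construction. If $G_1,\ldots,G_{2r}$ are i.i.d.\ $\operatorname{Exp}(1)$ and $G=\sum_i G_i$, then $(G_i/G)_i$ is uniform on the simplex. Moreover, $X=\sum_{i\le r}G_i/G=Y_1/(Y_1+Y_2)$, where $Y_1,Y_2\sim\GammaLaw(r,1)$ are independent, and this ratio is $\BetaLaw(r,r)$ by the standard change of variables.

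Alternatively, we can avoid distributional facts altogether by checking moments. Since both sides are polynomials in $a,b$, it suffices to compare coefficients of $a^sb^{j-s}$. On the left, this coefficient equals $\binom{s+r-1}{r-1}\binom{j-s+r-1}{r-1}$, the number of monomials. On the right, it equals
\[
  \binom{j+2r-1}{2r-1}\binom js\E\bigl[X^s(1-X)^{j-s}\bigr],
  \qquad
  \E\bigl[X^s(1-X)^{j-s}\bigr]=\frac{B(r+s,r+j-s)}{B(r,r)}.
\]
A routine factorial computation shows the two agree. The only real step is the identification of the law of $X$, which is standard; I expect no genuine obstacle.
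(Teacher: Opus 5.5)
Your proposal is correct and takes the paper's own route: apply \Cref{lem:pge23-dirichlet-moment} with $k=2r$, taking $a$ and $b$ each repeated $r$ times, then use Dirichlet aggregation to identify $X=\sum_{i\le r}\Theta_i\sim\BetaLaw(r,r)$. Your backup Gamma-construction argument and direct coefficient check are both valid, since $\binom{j+2r-1}{2r-1}\binom js B(r+s,r+j-s)/B(r,r)=\binom{s+r-1}{r-1}\binom{j-s+r-1}{r-1}$, but neither is needed.
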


\begin{proof}
  Let $(\Theta_1,\ldots,\Theta_{2r})$ have the
  $\operatorname{Dirichlet}(1,\ldots,1)$ distribution.
  Recall that this means the vector is uniformly distributed on the simplex
  \[
    \Theta_i\ge0\quad(1\le i\le2r),\qquad
    \sum_{i=1}^{2r}\Theta_i=1.
  \]
  The aggregation property of the Dirichlet distribution says that grouping
  coordinates adds their parameters. Thus, the pair
  $\bigl(\sum_{i=1}^r\Theta_i,\sum_{i=r+1}^{2r}\Theta_i\bigr)$ has the
  $\operatorname{Dirichlet}(r,r)$ distribution.
  A vector with this distribution is distributed as $(X,1-X)$ with
  $X\sim\BetaLaw(r,r)$, so we may realise $X$ as $\sum_{i=1}^r\Theta_i$.
  Apply \Cref{lem:pge23-dirichlet-moment} with $k=2r$, taking $x_i=a$
  for $1\le i\le r$ and $x_i=b$ for $r<i\le2r$. This gives
  \begin{align*}
    h_j(a^{\times r},b^{\times r})
    &=\binom{j+2r-1}{2r-1}
      \E\left[\left(a\sum_{i=1}^r\Theta_i+
        b\sum_{i=r+1}^{2r}\Theta_i\right)^j\right]\\
    &=\binom{j+2r-1}{2r-1}\E\left[\bigl(aX+b(1-X)\bigr)^j\right],
  \end{align*}
  which proves \eqref{eq:pge23-beta-h}.
\end{proof}

We now apply this representation with $j=n=m-2r$ to express the diagonal
polynomial as a Beta expectation.

\begin{lemma}[Beta formula for the diagonal polynomial]\label{lem:pge23-beta}
  Let $m\ge3$ be odd, let $r$ be an integer satisfying
  $1\le r\le(m-1)/2$, and set $n=m-2r$.
  Let $X\sim\BetaLaw(r,r)$ and define
  \begin{equation}\label{eq:pge23-rho}
    \rho_{n,m}(u)
    =\frac {m}{n}\bigl(u^n+(1-u)^n\bigr)-u^{n-1},
    \qquad u\in\R.
  \end{equation}
  Then, for every $q,\lambda\in\R$,
  \begin{equation}\label{eq:pge23-beta-second}
    \widetilde P_{m,r}(q,\lambda)
    =\binom{n+2r-1}{2r-1}\frac {n}{r}
    \E\left[
    X^n\rho_{n,m}\left(q+\lambda\frac{1-X}{X}\right)
    \right].
  \end{equation}
  If $\lambda>0$, then
  \begin{equation}\label{eq:pge23-beta-integral}
    \widetilde P_{m,r}(q,\lambda)
    =\frac{\Gamma(m)}{r\Gamma(n)\Gamma(r)^2}\,
    \lambda^{n+r}
    \int_0^\infty
    \frac{s^{r-1}}{(\lambda+s)^m}\rho_{n,m}(q+s)\dd s.
  \end{equation}
\end{lemma}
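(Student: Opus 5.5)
The plan is to expand each of the three terms of $\widetilde P_{m,r}(q,\lambda)$ with \Cref{lem:pge23-beta-representation} and then match them, term by term, against the expansion of $X^n\rho_{n,m}(q+\lambda(1-X)/X)$.

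\textbf{The two cycle terms.} By \eqref{eq:Pmr},
\[
  \widetilde P_{m,r}(q,\lambda)=\frac mr\bigl[h_n(\rep pr,\rep{(-\lambda)}r)+h_n(\rep qr,\rep\lambda r)\bigr]-h_{n-1}(\rep q{(r+1)},\rep\lambda r).
\]
Write $C=\binom{n+2r-1}{2r-1}$. Applying \Cref{lem:pge23-beta-representation} with $j=n$ gives
\[
  h_n(\rep qr,\rep\lambda r)=C\,\E\bigl[(qX+\lambda(1-X))^n\bigr],
  \qquad
  h_n(\rep pr,\rep{(-\lambda)}r)=C\,\E\bigl[(pX-\lambda(1-X))^n\bigr].
\]
Set $u=q+\lambda(1-X)/X$, so that $X^nu^n=(qX+\lambda(1-X))^n$. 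Since $p=1-q$, we also have
\[
  X(1-u)=X-qX-\lambda(1-X)=pX-\lambda(1-X).
\]
Hence
\[
  X^n\rho_{n,m}(u)=\frac mn\bigl[(qX+\lambda(1-X))^n+(pX-\lambda(1-X))^n\bigr]-X\,(qX+\lambda(1-X))^{n-1}.
\]
Multiplying by $C\cdot n/r$ and taking expectations reproduces exactly the two cycle terms with coefficient $m/r$. This step is only bookkeeping.

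\textbf{The path term.} It remains to show
\[
  h_{n-1}(\rep q{(r+1)},\rep\lambda r)=C\,\frac nr\,\E\bigl[X(qX+\lambda(1-X))^{n-1}\bigr].
\]
I would redo the proof of \Cref{lem:pge23-beta-representation} with $2r+1$ Dirichlet coordinates, using $r+1$ copies of $q$ and $r$ copies of $\lambda$. Aggregation gives
\[
  h_{n-1}(\rep q{(r+1)},\rep\lambda r)=\binom{n+2r-1}{2r}\E\bigl[(qX'+\lambda(1-X'))^{n-1}\bigr],
  \qquad X'\sim\BetaLaw(r+1,r).
\]
Comparing the two Beta densities, for $X\sim\BetaLaw(r,r)$ one has $\E[Xf(X)]=\E[X]\,\E[f(X')]=\tfrac12\E[f(X')]$. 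The constants then agree because
\[
  \binom{n+2r-1}{2r}=\binom{n+2r-1}{2r-1}\frac n{2r}=\frac12\cdot C\,\frac nr.
\]
This proves \eqref{eq:pge23-beta-second}. I expect this term to be the only real obstacle: the extra copy of $q$ has to be absorbed by the size-biasing factor $X$, and the binomial constants must be checked carefully.

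\textbf{The integral formula.} For $\lambda>0$, write the expectation in \eqref{eq:pge23-beta-second} as an integral against the $\BetaLaw(r,r)$ density and substitute $s=\lambda(1-x)/x$. This gives $x=\lambda/(\lambda+s)$, $1-x=s/(\lambda+s)$ and $\dd x=-\lambda(\lambda+s)^{-2}\dd s$, and the substitution maps $(0,1)$ onto $(0,\infty)$. The integrand becomes
\[
  x^{n+r-1}(1-x)^{r-1}\,\lambda(\lambda+s)^{-2}=\lambda^{n+r}s^{r-1}(\lambda+s)^{-(n+2r)},
\]
and $n+2r=m$. For the constant, use $C=\Gamma(m)/(\Gamma(2r)\Gamma(n+1))$. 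Then
\[
  C\,\frac nr\,\frac{\Gamma(2r)}{\Gamma(r)^2}=\frac{\Gamma(m)}{r\,\Gamma(n)\Gamma(r)^2},
\]
which yields \eqref{eq:pge23-beta-integral}.
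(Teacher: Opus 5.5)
Your proof is correct and follows the paper's outline. You apply the Beta representation to the two cycle terms, rewrite $qX+\lambda(1-X)=Xu$ and $pX-\lambda(1-X)=X(1-u)$, and then substitute $s=\lambda(1-x)/x$ with the same constant bookkeeping. The one genuine difference is how you treat the path term $h_{n-1}(\rep q{(r+1)},\rep\lambda r)$.

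The paper does not set up a new Dirichlet computation there. It differentiates the generating function $\prod_i(1-y_iz)^{-1}$ in $a$ to get $\partial_a h_n(\rep ar,\rep br)=r\,h_{n-1}(\rep a{(r+1)},\rep br)$. It also differentiates $\E[(aX+b(1-X))^n]$ under the expectation to get $n\,\E[X(aX+b(1-X))^{n-1}]$. The size-biasing factor $X$ and the constant $n/r$ then come out of the chain rule, with no new distribution and no separate binomial check.

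You instead apply \Cref{lem:pge23-dirichlet-moment} with $2r+1$ coordinates, aggregate to $X'\sim\BetaLaw(r+1,r)$, and absorb the extra copy of $q$ through the size-bias identity $\E[Xf(X)]=\tfrac12\E[f(X')]$. This obliges you to verify $\binom{n+2r-1}{2r}=\tfrac12\binom{n+2r-1}{2r-1}\tfrac nr$ by hand, which you do correctly.

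Your route is more probabilistic and extends verbatim to $h_j(\rep as,\rep bt)$ for arbitrary $s,t$; the paper's remarks say its Lean formalisation uses such a general Beta representation. The paper's derivative trick is shorter because it reuses the $j=n$ representation already established.
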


\begin{proof}
  By \Cref{lem:pge23-beta-representation} with $j=n$,
  \[
    h_n(a^{\times r},b^{\times r})
    =\binom{n+2r-1}{2r-1}\E\left[\bigl(aX+b(1-X)\bigr)^n\right].
  \]
  Applying \eqref{eq:pge23-h-generating} with $r$ copies each of $a$ and $b$ and differentiating with respect to $a$ gives
  \[
    \frac{\partial}{\partial a}
    \sum_{d\ge0}h_d(a^{\times r},b^{\times r})z^d
    =rz(1-az)^{-(r+1)}(1-bz)^{-r}.
  \]
  Comparing coefficients of $z^n$ yields
  \[
    \frac{\partial}{\partial a}h_n(a^{\times r},b^{\times r})
    =r\,h_{n-1}(a^{\times(r+1)},b^{\times r}).
  \]
  The expectation $\E\left[\bigl(aX+b(1-X)\bigr)^n\right]$ is a polynomial in $a,b$, since $X$ is bounded and its distribution does not depend on these parameters.
  Differentiating term by term gives
  \[
    \frac{\partial}{\partial a}\E\left[\bigl(aX+b(1-X)\bigr)^n\right]
    =n\,\E\left[X\bigl(aX+b(1-X)\bigr)^{n-1}\right].
  \]
  Combining the two derivative identities gives
  \begin{equation}\label{eq:pge23-beta-h-deriv}
    h_{n-1}(a^{\times(r+1)},b^{\times r})
    =\binom{n+2r-1}{2r-1}\frac {n}{r}
    \E\left[X\bigl(aX+b(1-X)\bigr)^{n-1}\right].
  \end{equation}

  Set $p=1-q$ and
  \[
    V_X=qX+\lambda(1-X),
    \qquad
    W_X=pX-\lambda(1-X),
    \qquad
    V_X+W_X=X.
  \]
  Apply \Cref{lem:pge23-beta-representation} to the two cycle terms
  in the definition \eqref{eq:Pmr} of $P_{m,r}$, with
  $\lambda_1=\cdots=\lambda_r=\lambda$, and
  \eqref{eq:pge23-beta-h-deriv} to the path term. This gives
  \begin{equation}\label{eq:pge23-beta-first}
    \widetilde P_{m,r}(q,\lambda)
    =\binom{n+2r-1}{2r-1}\frac {n}{r}
    \left[
      \frac {m}{n}\E\left[V_X^n+W_X^n\right]
      -\E\left[XV_X^{n-1}\right]
    \right].
  \end{equation}
  Since $X\in(0,1)$ almost surely, write
  \[
    u_X=\frac{V_X}{X}=q+\lambda\frac{1-X}{X},
    \qquad
    V_X=Xu_X,
    \qquad
    W_X=X(1-u_X).
  \]
  The two expectations in \eqref{eq:pge23-beta-first} therefore
  combine into the expectation of
  \[
    \begin{aligned}
      \frac {m}{n}\bigl(V_X^n+W_X^n\bigr)-XV_X^{n-1}
      &=X^n\left[
        \frac {m}{n}\bigl(u_X^n+(1-u_X)^n\bigr)-u_X^{n-1}
      \right]\\
      &=X^n\rho_{n,m}(u_X).
    \end{aligned}
  \]
  This proves \eqref{eq:pge23-beta-second}.

  We now derive \eqref{eq:pge23-beta-integral}.
  For $\lambda>0$, the probability density function of the Beta distribution gives
  \[
    \E\left[X^n\rho_{n,m}\left(q+\lambda\frac{1-X}{X}\right)\right]
    =\frac{\Gamma(2r)}{\Gamma(r)^2}
    \int_0^1 x^{n+r-1}(1-x)^{r-1}
    \rho_{n,m}\left(q+\lambda\frac{1-x}{x}\right)\dd x.
  \]
  We now make the substitution
  \[
    s=\lambda\frac{1-x}{x},
    \qquad
    x=\frac{\lambda}{\lambda+s},
    \qquad
    1-x=\frac{s}{\lambda+s},
    \qquad
    \dd x=-\frac{\lambda}{(\lambda+s)^2}\dd s.
  \]
  The endpoints $x=1$ and $x\to0^+$ correspond to $s=0$ and
  $s\to\infty$, respectively. Reversing the limits and using $n+2r=m$,
  we obtain the transformed weight
  \[
    \left(\frac{\lambda}{\lambda+s}\right)^{n+r-1}
    \left(\frac{s}{\lambda+s}\right)^{r-1}
    \frac{\lambda}{(\lambda+s)^2}
    =\frac{\lambda^{n+r}s^{r-1}}{(\lambda+s)^m}.
  \]
  Hence the expectation equals
  \[
    \frac{\Gamma(2r)}{\Gamma(r)^2}\lambda^{n+r}
    \int_0^\infty
    \frac{s^{r-1}}{(\lambda+s)^m}\rho_{n,m}(q+s)\dd s.
  \]
  Finally,
  \[
    \begin{aligned}
      \binom{m-1}{2r-1}\frac {n}{r}
      \frac{\Gamma(2r)}{\Gamma(r)^2}
      &=\frac{(m-1)!}{(2r-1)!\,n!}\cdot\frac {n}{r}
        \cdot\frac{(2r-1)!}{((r-1)!)^2}\\
      &=\frac{\Gamma(m)}{r\Gamma(n)\Gamma(r)^2}.
    \end{aligned}
  \]
  Substituting into \eqref{eq:pge23-beta-second} proves
  \eqref{eq:pge23-beta-integral}.
\end{proof}

\begin{lemma}\label{lem:pge23-ell-negative}
  Let $m\ge3$ be odd, let $r$ be an integer satisfying
  $1\le r\le(m-1)/2$, and set $n=m-2r$.
  If $0\le q\le1/2$ and $\lambda\le0$, then the diagonal polynomial
  defined in \eqref{eq:pge23-Ptilde} satisfies
  \[
    \widetilde P_{m,r}(q,\lambda)\ge0.
  \]
\end{lemma}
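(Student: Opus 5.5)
The plan is to apply the Beta formula \eqref{eq:pge23-beta-second} from \Cref{lem:pge23-beta} and reduce everything to a pointwise sign condition on the one-variable polynomial $\rho_{n,m}$ defined in \eqref{eq:pge23-rho}. By \eqref{eq:pge23-beta-second},
\[
  \widetilde P_{m,r}(q,\lambda)=\binom{n+2r-1}{2r-1}\frac nr\,\E\left[X^n\rho_{n,m}(u_X)\right],
  \qquad u_X=q+\lambda\frac{1-X}{X},
\]
with $X\sim\BetaLaw(r,r)$. The prefactor is positive, and $X^n>0$ almost surely. Since $\lambda\le0$ and $(1-X)/X>0$, we have $u_X\le q\le1/2$ almost surely. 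So it suffices to prove the following claim: $\rho_{n,m}(u)\ge0$ for every real $u\le1/2$ and every positive odd $n$ with $m>n$.

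To prove the claim, set $v=1-u\ge1/2$. I would first check that $v\ge|u|$: if $0\le u\le 1/2$ then $v\ge 1/2\ge u$, and if $u<0$ then $v=1+|u|>|u|$. Since $n$ is odd, this gives $u^n+v^n\ge0$. Because $m/n>1$, it is then enough to show the key inequality
\[
  u^n+v^n\ge u^{n-1}.
\]
Once this holds, $\rho_{n,m}(u)\ge(m/n-1)(u^n+v^n)\ge0$. The case $n=1$ is trivial, since then $u+v=1=u^0$ (equivalently $\rho_{1,m}=m-1$).

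I would establish the key inequality in two cases.
\begin{itemize}
\item If $0\le u\le1/2$, then $v\ge u\ge0$ gives $v^n\ge v\,u^{n-1}$. Hence $u^n+v^n\ge u^{n-1}(u+v)=u^{n-1}$.
\item If $u=-t<0$, then $u^{n-1}=t^{n-1}$ because $n-1$ is even, and $u^n+v^n=(1+t)^n-t^n$. By the mean value theorem, or by keeping only the binomial term $n t^{n-1}$, this is at least $n t^{n-1}\ge t^{n-1}$.
\end{itemize}

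I do not expect a real obstacle here. The only points needing care are that $u_X$ stays at most $1/2$ for every $X\in(0,1)$, which uses both $q\le1/2$ and $\lambda\le0$, and that the parity of $n$ is used correctly in the negative-$u$ case. The factor $m/n>1$ gives slack, so no sharp estimate is needed.
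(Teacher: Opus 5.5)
Your proof is correct and follows the paper's route: the Beta formula \eqref{eq:pge23-beta-second} reduces the claim to $\rho_{n,m}(u)\ge0$ for $u\le1/2$, and your key inequality is exactly the paper's observation that $u^n+(1-u)^n-u^{n-1}=(1-u)\bigl((1-u)^{n-1}-u^{n-1}\bigr)\ge0$, combined with $m/n>1$. The case split is unnecessary: $1-u\ge|u|$ and the evenness of $n-1$ give $(1-u)^{n-1}\ge u^{n-1}$ for both signs of $u$ at once, which is how the paper handles it.
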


\begin{proof}
  From the definition \eqref{eq:pge23-rho} of $\rho_{n,m}$,
  \begin{align*}
    \rho_{n,m}(u)
    &=\frac {m}{n}\bigl(u^n+(1-u)^n\bigr)-u^{n-1} \\
    &=\frac {m}{n}(1-u)\bigl((1-u)^{n-1}-u^{n-1}\bigr)
    +\left(\frac {m}{n}-1\right)u^{n-1}.
  \end{align*}
  This identity follows by expanding
  $(1-u)((1-u)^{n-1}-u^{n-1})=(1-u)^n-u^{n-1}+u^n$.
  Since $n=m-2r>0$, we have $m/n-1=2r/n>0$.
  Also, as $n$ is odd, $n-1$ is even.
  Thus, in this decomposition of $\rho_{n,m}(u)$, the second term
  $\left(\frac{m}{n}-1\right)u^{n-1}$ is nonnegative for every
  $u\in\R$.
  The first term, $\frac{m}{n}(1-u)\bigl((1-u)^{n-1}-u^{n-1}\bigr)$,
  is nonnegative whenever $1-u\ge0$ and $|1-u|\ge|u|$, both of which hold
  for $u\le1/2$.

  Since $X\sim\BetaLaw(r,r)$ takes values in $(0,1)$, we have $(1-X)/X\ge0$, so $\lambda\le0$ gives
  \[
    q+\lambda\,\frac{1-X}{X}\le q\le\frac{1}{2}.
  \]
  Hence $X^n\rho_{n,m}\bigl(q+\lambda(1-X)/X\bigr)\ge0$, and \eqref{eq:pge23-beta-second} gives $\widetilde P_{m,r}(q,\lambda)\ge0$.
\end{proof}

\subsection{Gamma representation and the moment bound}
\label{subsec:gamma-positivity}

It now suffices to consider the case $\lambda>0$.
Unlike in the case $\lambda\le0$, the argument $q+s$ of $\rho_{n,m}$ in
\eqref{eq:pge23-beta-integral} ranges over $[q,\infty)$, where
$\rho_{n,m}$ can be negative; for example, $\rho_{5,7}(0.7)<0$.
The integrand can therefore no longer be handled pointwise. We must instead
prove that the integral as a whole is nonnegative.

Since the coefficient multiplying the integral in
\eqref{eq:pge23-beta-integral} is positive for $\lambda>0$, it remains to show that
\[
  \int_0^\infty
  \frac{s^{r-1}}{(\lambda+s)^m}\rho_{n,m}(q+s)\dd s\ge0.
\]
The difficulty is that $\lambda$ and $s$ are coupled through
$(\lambda+s)^{-m}$. The following identity separates
them:
\begin{equation}\label{eq:pge23-laplace-kernel}
  \frac{1}{(\lambda+s)^m}
  =\frac{1}{\Gamma(m)}\int_0^\infty t^{m-1}e^{-t(\lambda+s)}\dd t.
\end{equation}
This formula follows by the change of variable $w=t(\lambda+s)$ in the
defining integral $\Gamma(m)=\int_0^\infty w^{m-1}e^{-w}\dd w$.

\begin{lemma}\label{lem:pge23-laplace-gamma}
  Let $m\ge3$ be odd, let $r$ be an integer satisfying
  $1\le r\le(m-1)/2$, and set $n=m-2r$.
  Let $Y\sim\GammaLaw(r,1)$.
  For every $q\in\R$ and $\lambda>0$,
  \begin{equation}\label{eq:pge23-laplace-gamma}
    \widetilde P_{m,r}(q,\lambda)
    =\frac{\lambda^{n+r}}{r\Gamma(n)\Gamma(r)}
    \int_0^\infty
    t^{m-r-1}e^{-\lambda t}
    \E\left[\rho_{n,m}\left(q+\frac {Y}{t}\right)\right]\dd t.
  \end{equation}
  Equivalently, if $Z\sim\GammaLaw(m-r,1)$ is independent of $Y$, then
  \begin{equation}\label{eq:pge23-gamma-expectation}
    \widetilde P_{m,r}(q,\lambda)
    =\binom{m-r-1}{r}
      \E\!\left[\rho_{n,m}\!\left(q+\frac{\lambda}{Z}Y\right)\right].
  \end{equation}
\end{lemma}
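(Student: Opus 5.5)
We start from the integral formula \eqref{eq:pge23-beta-integral} of \Cref{lem:pge23-beta}, valid for $\lambda>0$, and substitute the Laplace identity \eqref{eq:pge23-laplace-kernel} for $(\lambda+s)^{-m}$. This gives a double integral over $s,t\in(0,\infty)$ of
\[
  \frac{\Gamma(m)}{r\Gamma(n)\Gamma(r)^2}\lambda^{n+r}\cdot\frac1{\Gamma(m)}\,t^{m-1}e^{-\lambda t}\,s^{r-1}e^{-ts}\rho_{n,m}(q+s).
\]
Because $\rho_{n,m}$ is a polynomial, the integrand is dominated by $t^{m-1}e^{-\lambda t}s^{r-1}e^{-ts}(C+|s|^n)$. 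Its absolute integral equals, up to constants, $\int_0^\infty s^{r-1}(1+s^n)(\lambda+s)^{-m}\dd s<\infty$, since $r-1+n-m=-r-1<-1$. Fubini therefore lets us integrate in $s$ first for fixed $t$.

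For fixed $t>0$, we substitute $s=y/t$. This gives
\[
  \int_0^\infty s^{r-1}e^{-ts}\rho_{n,m}(q+s)\dd s=t^{-r}\Gamma(r)\,\E\bigl[\rho_{n,m}(q+Y/t)\bigr],
\]
with $Y\sim\GammaLaw(r,1)$, because the density of $Y$ is $y^{r-1}e^{-y}/\Gamma(r)$. The constants $\Gamma(m)$ cancel, one factor $\Gamma(r)$ cancels, and $t^{m-1}t^{-r}=t^{m-r-1}$. This yields exactly \eqref{eq:pge23-laplace-gamma}.

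For \eqref{eq:pge23-gamma-expectation}, we rewrite the $t$-integral using the density of $Z=\lambda t\sim\GammaLaw(m-r,1)$. Substituting $t=z/\lambda$ gives
\[
  \int_0^\infty t^{m-r-1}e^{-\lambda t}F(t)\dd t=\lambda^{-(m-r)}\Gamma(m-r)\,\E[F(Z/\lambda)].
\]
Here $F(t)=\E_Y[\rho_{n,m}(q+Y/t)]$, so $F(Z/\lambda)=\E_Y[\rho_{n,m}(q+\lambda Y/Z)]$. Independence and Fubini again justify combining the two expectations into one. The power of $\lambda$ is $\lambda^{n+r-(m-r)}=\lambda^0$, since $n+2r=m$. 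The constant becomes
\[
  \frac{\Gamma(m-r)}{r\Gamma(n)\Gamma(r)}=\frac{(m-r-1)!}{r!\,(n-1)!}=\binom{m-r-1}{r},
\]
because $(m-r-1)-r=n-1$.

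The only delicate point is justifying the interchanges of integration, since $\rho_{n,m}$ changes sign. We handle this with the absolute-integrability bound above, applied with $|\rho_{n,m}|\le C(1+|s|^n)$ on $s\ge0$. Everything else is bookkeeping of Gamma constants.
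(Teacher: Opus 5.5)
Your proof is correct and follows the paper's argument essentially step for step. You insert the Laplace kernel \eqref{eq:pge23-laplace-kernel} into \eqref{eq:pge23-beta-integral}, justify the exchange of integrals by absolute integrability, substitute $y=ts$ to obtain the $\GammaLaw(r,1)$ expectation, and then substitute $z=\lambda t$ to obtain the $\GammaLaw(m-r,1)$ form. The only cosmetic difference is your integrability bound: you dominate $|\rho_{n,m}|$ by a degree-$n$ polynomial, whereas the paper uses the fact that $\rho_{n,m}$ has degree $n-1$; your cruder bound still gives decay $O(s^{-r-1})$, which suffices.
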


\begin{proof}
  By inserting \eqref{eq:pge23-laplace-kernel} into
  \eqref{eq:pge23-beta-integral} and exchanging the two integrals, we obtain
  \begin{align*}
    \widetilde P_{m,r}(q,\lambda)
    &=\frac{\Gamma(m)}{r\Gamma(n)\Gamma(r)^2}\,\lambda^{n+r}
      \int_0^\infty s^{r-1}\rho_{n,m}(q+s)
      \left(\frac{1}{\Gamma(m)}\int_0^\infty t^{m-1}e^{-t(\lambda+s)}\dd t\right)
      \dd s\\
    &=\frac{\lambda^{n+r}}{r\Gamma(n)\Gamma(r)^2}
      \int_0^\infty t^{m-1}e^{-\lambda t}
      \left(\int_0^\infty s^{r-1}e^{-ts}\rho_{n,m}(q+s)\dd s\right)\dd t.
  \end{align*}
  We justify this exchange by verifying absolute integrability. Integrating
  the absolute value first in $t$ and using
  \eqref{eq:pge23-laplace-kernel} gives
  \[
    \int_0^\infty\!\!\int_0^\infty
    s^{r-1}t^{m-1}e^{-t(\lambda+s)}\bigl|\rho_{n,m}(q+s)\bigr|\dd t\dd s
    =\Gamma(m)\int_0^\infty \frac{s^{r-1}}{(\lambda+s)^m}
    \bigl|\rho_{n,m}(q+s)\bigr|\dd s.
  \]
  It remains to show that the integral on the right is finite.
  Because $n$ is odd, the terms of degree $n$ in $u^n+(1-u)^n$ cancel.
  The coefficient of $u^{n-1}$ in $\rho_{n,m}(u)$ is $m-1>0$, so
  $\rho_{n,m}(u)$ has degree $n-1$ in $u$.
  Since $n=m-2r$, the integrand is therefore
  $O(s^{r+n-2-m})=O(s^{-r-2})$ as $s\to\infty$, and hence is
  integrable at infinity. As $s\to0$,
  it is $O(s^{r-1})$, which is integrable because $r\ge1$.

  We now compute the inner integral in $s$ in the expression for
  $\widetilde P_{m,r}(q,\lambda)$ obtained after exchanging the two integrations.
  For fixed $t>0$, substituting $y=ts$, so that $s=y/t$ and
  $\dd s=\dd y/t$, gives
  \[
    \int_0^\infty s^{r-1}e^{-ts}\rho_{n,m}(q+s)\dd s
    =t^{-r}\int_0^\infty y^{r-1}e^{-y}
     \rho_{n,m}\left(q+\frac {y}{t}\right)\dd y
    =\Gamma(r)\,t^{-r}
     \E\left[\rho_{n,m}\left(q+\frac {Y}{t}\right)\right],
  \]
  where the last equality holds because $Y$ has density $y^{r-1}e^{-y}/\Gamma(r)$ on $(0,\infty)$.
  Substituting this for the inner integral obtained after the exchange gives \eqref{eq:pge23-laplace-gamma}.
  To obtain \eqref{eq:pge23-gamma-expectation}, substitute $z=\lambda t$
  in this integral. Since $m-r=n+r$, the powers of $\lambda$ cancel.
  The probability density function of $Z$ is
  $z^{m-r-1}e^{-z}/\Gamma(m-r)$, so the coefficient multiplying the
  expectation is
  \[
    \frac{\Gamma(m-r)}{r\Gamma(n)\Gamma(r)}
    =\binom{m-r-1}{r}.
  \]
  This proves the second identity in the lemma.
\end{proof}

In \eqref{eq:pge23-laplace-gamma}, the parameter $\lambda$ occurs only in
the positive coefficient $\lambda^{n+r}/(r\Gamma(n)\Gamma(r))$ of the integral
and the positive weight
$t^{m-r-1}e^{-\lambda t}$. The expectation
$\E\left[\rho_{n,m}(q+Y/t)\right]$ is independent of $\lambda$.
Thus, to prove $\widetilde P_{m,r}(q,\lambda)\ge0$ for every $\lambda>0$,
it suffices to prove that this expectation is nonnegative for every $t>0$.

We now seek to prove $\E\left[\rho_{n,m}(q+xY)\right]\ge0$ for $0\le q\le1/3$,
where $x=1/t>0$. The following lemma expands $\rho_{n,m}$ around $u=1/2$.

\begin{lemma}\label{lem:pge23-rho-centered}
  Let $n\ge1$ be odd, let $r\ge1$ be an integer, and set $m=n+2r$.
  For $0\le j\le(n-1)/2$, let
  \[
    a_j=2^{2j+1-n}\binom n{2j}>0.
  \]
  Then, for every $v\in\R$,
  \begin{equation}\label{eq:pge23-rho-centered}
    n\rho_{n,m}\left(\frac12+v\right)
    =2ra_0+\sum_{j=1}^{(n-1)/2}
    a_j\bigl(2(r+j)v^{2j}-jv^{2j-1}\bigr).
  \end{equation}
\end{lemma}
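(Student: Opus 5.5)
This is a direct binomial expansion around $u=1/2$, followed by index matching. Substitute $u=\tfrac12+v$, so that $1-u=\tfrac12-v$, and expand each of the three terms of $n\rho_{n,m}$ with the binomial theorem:
\[
  (\tfrac12+v)^n+(\tfrac12-v)^n=2\sum_{k\ \mathrm{even}}\binom nk 2^{k-n}v^k.
\]
The odd powers cancel because the two terms differ only by $v\mapsto -v$. This gives
\[
  n\rho_{n,m}(\tfrac12+v)=2m\sum_{j=0}^{(n-1)/2}\binom n{2j}2^{2j-n}v^{2j}-n\sum_{k=0}^{n-1}\binom{n-1}{k}2^{k+1-n}v^k .
\]
We then read off the coefficients of even and odd powers of $v$ separately and match them with the right-hand side of \eqref{eq:pge23-rho-centered}.

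\emph{Even powers $v^{2j}$.} The coefficient is
\[
  m\binom n{2j}2^{2j+1-n}-n\binom{n-1}{2j}2^{2j+1-n}.
\]
Use the identity $n\binom{n-1}{2j}=(n-2j)\binom n{2j}$. The coefficient becomes $a_j\bigl(m-(n-2j)\bigr)=a_j(2r+2j)$, since $m-n=2r$. For $j=0$ this is the constant $2ra_0$. For $j\ge1$ it is the term $2(r+j)a_jv^{2j}$.

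\emph{Odd powers $v^{2j-1}$, $1\le j\le(n-1)/2$.} Only the $u^{n-1}$ term contributes, and its coefficient is $-n\binom{n-1}{2j-1}2^{2j-n}$. Use $n\binom{n-1}{2j-1}=2j\binom n{2j}$. The coefficient becomes $-2j\binom n{2j}2^{2j-n}=-j\,a_j$, which is the claimed term. The top odd power $v^{n}$ does not occur, because the $u^n$ and $(1-u)^n$ contributions cancel and the $u^{n-1}$ term has degree only $n-1$.

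Positivity of $a_j$ is immediate. The only care needed is bookkeeping of the powers of $2$ and the ranges of indices. In particular, the largest odd power in $u^{n-1}$ is $v^{n-2}$, which corresponds to $j=(n-1)/2$, so the two index ranges agree. There is no real obstacle.
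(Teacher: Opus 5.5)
Your proof is correct: the even-power coefficient $a_j(m-n+2j)=2(r+j)a_j$ and the odd-power coefficient $-ja_j$ follow from the absorption identities $n\binom{n-1}{2j}=(n-2j)\binom n{2j}$ and $n\binom{n-1}{2j-1}=2j\binom n{2j}$, and the index ranges match. This is essentially the paper's computation, except that the paper first rewrites $n\rho_{n,m}(u)=2rA_n(u)-(1-u)A_n'(u)$ with $A_n(u)=u^n+(1-u)^n$, so that only the expansion $A_n(\tfrac12+v)=\sum_j a_jv^{2j}$ and its $v$-derivative are needed; your two binomial identities are the coefficientwise form of that derivative relation.
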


\begin{proof}
  Let $A_n(u)=u^n+(1-u)^n$. Since
  \[
    (1-u)A_n'(u)=nu^{n-1}-nA_n(u),
  \]
  the definition \eqref{eq:pge23-rho} and $m-n=2r$ give
  \begin{equation}\label{eq:pge23-rho-derivative}
    n\rho_{n,m}(u)=2rA_n(u)-(1-u)A_n'(u).
  \end{equation}
  By the binomial theorem,
  \begin{align*}
    A_n\left(\frac12+v\right)
    &=\left(\frac12+v\right)^n+\left(\frac12-v\right)^n\\
    &=2\sum_{2j\le n}\binom n{2j}\left(\frac12\right)^{n-2j}v^{2j}
      =\sum_{j=0}^{(n-1)/2}a_jv^{2j}.
  \end{align*}
  Differentiating in $v$ and substituting into
  \eqref{eq:pge23-rho-derivative} gives
  \[
    n\rho_{n,m}\left(\frac12+v\right)
    =2r\sum_{j=0}^{(n-1)/2}a_jv^{2j}
    -\left(\frac12-v\right)
      \sum_{j=1}^{(n-1)/2}2ja_jv^{2j-1},
  \]
  which simplifies to \eqref{eq:pge23-rho-centered}.
\end{proof}

Since all $a_j$ are positive, \eqref{eq:pge23-rho-centered} shows that it
suffices to prove
\[
  j\E\left[v^{2j-1}\right]\le2(r+j)\E\left[v^{2j}\right],
  \qquad 1\le j\le(n-1)/2,
\]
where $v=q+xY-1/2$. The following Gamma moment bound will establish these
inequalities.

\begin{lemma}[Odd-to-even Gamma moment bound]\label{lem:pge23-gamma-moment}
  Let $r,j\ge1$ be integers and $Y\sim\GammaLaw(r,1)$.
  For every $z\ge0$,
  \begin{equation*}
    3j\,\E\left[(zY-1)^{2j-1}\right]
    \le(r+j)\,\E\left[(zY-1)^{2j}\right].
  \end{equation*}
\end{lemma}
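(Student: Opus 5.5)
We need to prove $3j\,\E(zY-1)^{2j-1}\le(r+j)\,\E(zY-1)^{2j}$ for $Y\sim\GammaLaw(r,1)$ and $z\ge0$. Following the introduction, we work with the even moments as functions of the scale and use a differential identity that links consecutive moments. Write $M_k(z)=\E[(zY-1)^k]$.

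\emph{Step 1: a derivative identity.} Gamma integration by parts, $\E[Yf'(Y)]=\E[(Y-r)f(Y)]$, yields a recursion in $k$. More convenient is differentiation in $z$:
\[
  M_{2j}'(z)=2j\,\E\bigl[Y(zY-1)^{2j-1}\bigr].
\]
Since $zY=(zY-1)+1$, this gives $zM_{2j}'(z)=2j\bigl(M_{2j}+M_{2j-1}\bigr)$. Hence
\[
  M_{2j-1}=\frac{z}{2j}M_{2j}'-M_{2j},
\]
and the claim becomes the first-order differential inequality
\[
  \tfrac32\,zM_{2j}'(z)\le(r+4j)\,M_{2j}(z).
\]
Equivalently, $z\mapsto z^{-2(r+4j)/3}M_{2j}(z)$ is nonincreasing on $(0,\infty)$. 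The case $z=0$ is trivial: there $M_{2j-1}=-1$ and $M_{2j}=1$.

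\emph{Step 2: a logarithmic-derivative bound.} It remains to bound $z\,(\log M_{2j})'$ by $\frac{2}{3}(r+4j)$. Here $M_{2j}(z)=\sum_i\binom{2j}{i}(-1)^{2j-i}(r)_i z^i$ is a polynomial of degree $2j$, and $zM'/M$ is a weighted average of the exponents $i\le 2j$ only if all the coefficients are nonnegative, which they are not. So I would instead use the Gamma representation in the form
\[
  M_{2j}(z)=\frac{1}{\Gamma(r)}\int_0^\infty (zy-1)^{2j}y^{r-1}e^{-y}\,\dd y .
\]
Substituting $w=zy$ turns this into $z^{-r}\int_0^\infty (w-1)^{2j}w^{r-1}e^{-w/z}\,\dd w/\Gamma(r)$. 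Differentiating in $z$ gives
\[
  zM'/M=-r+\frac{\E_\nu[w]}{z},
\]
where $\nu$ is the probability measure proportional to $(w-1)^{2j}w^{r-1}e^{-w/z}$. The target inequality is then $\E_\nu[w]\le z\bigl(r+\frac23(r+4j)\bigr)$. Writing $\E_\nu[w]=z\,\E[Y(zY-1)^{2j}]/\E[(zY-1)^{2j}]$, this reduces to
\[
  \E\bigl[Y(zY-1)^{2j}\bigr]\le\tfrac{5r+8j}{3}\,\E\bigl[(zY-1)^{2j}\bigr].
\]
That is, the size-biasing by the even weight $(zY-1)^{2j}$ shifts the mean of $Y$ by at most $\frac{2r+8j}{3}$ above $r$.

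\emph{Step 3: the main obstacle, proving the size-biasing bound uniformly in $z$.} I would first check the two asymptotic regimes. As $z\to\infty$, the weight behaves like $y^{2j}$, so the tilted mean tends to $r+2j$, which is below $\frac{5r+8j}{3}$. As $z\to0$, the weight is nearly constant, so the mean is near $r$. The delicate region is intermediate $z$, where the weight vanishes at $y=1/z$.

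My first attempt would be to apply integration by parts for the Gamma law once more, writing $\E[Y\phi(Y)]=r\E[\phi]+\E[Y\phi'(Y)]$ with $\phi=(zY-1)^{2j}$. This gives
\[
  \E[Y\phi]=r\,M_{2j}+2j\,\E\bigl[zY(zY-1)^{2j-1}\bigr].
\]
The last term equals $2j(M_{2j}+M_{2j-1})$, so the argument becomes circular. I therefore expect to need a genuinely second-order input: an ODE in $z$ for $M_{2j}$ obtained from the Laguerre-type structure, namely $M_k$ expressed through confluent hypergeometric functions. From it I would derive a Riccati inequality for $R=zM_{2j}'/M_{2j}$ and show that $R$ cannot cross the level $\frac23(r+4j)$, using the boundary values $R(0)=0$ and $R(\infty)=2j$.

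If the comparison argument fails, the fallback is the Cauchy--Schwarz/Hölder bound $|M_{2j-1}|\le M_{2j}^{(2j-1)/(2j)}$, handled together with an explicit lower bound on $M_{2j}$ in the intermediate range of $z$. I expect this to be the main difficulty.
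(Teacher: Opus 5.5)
Your Steps 1 and 2 are correct reformulations. With $b=1/z$ and the paper's $F(b)=\E(Y-b)^{2j}$, $H(b)=(r+j)F(b)+\tfrac{3b}{2}F'(b)$, one has $\tfrac32 zM_{2j}'-(r+4j)M_{2j}=-z^{2j}H(b)$. So your condition $R:=zM_{2j}'/M_{2j}\le\kappa:=\tfrac23(r+4j)$ is exactly the paper's $H\ge0$. There is, however, a genuine gap: Step 3 contains the whole content of the lemma, it is not carried out, and the barrier argument you sketch would not work.

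The second-order ODE you want is elementary. Apply the Gamma integration by parts to $f(y)=(y-b)^{2j-1}$ rather than to $(y-b)^{2j}$ (that choice is what made your attempt circular). This gives $I_{2j}=(r+2j-1-b)I_{2j-1}+(2j-1)bI_{2j-2}$, and hence $bF''+(b-r-2j+1)F'-2jF=0$. But once you have it, the sign of $R'$ at a point where $R=\kappa$ is not fixed. At any zero $b_0$ of $H$ one gets $H'(b_0)/F(b_0)=(r+4j)(b_0-b_*)/b_0$ with $b_*=\frac{(r+j)(5r+8j)}{3(r+4j)}$. In your variable, any crossing of the level $\kappa$ at $z<z_*:=1/b_*$ is upward and any crossing at $z>z_*$ is downward. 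So $\kappa$ is not a barrier, and the boundary values $R(0)=0$, $R(\infty)=2j$ are compatible with an excursion of $R$ above $\kappa$ straddling $z_*$. The ODE only localises the problem. The missing ingredient is the quantitative one-point estimate $R(z_*)\le\kappa$, i.e.\ $H(b_*)\ge0$.

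That estimate is where the constant $3$ is actually used, and it is the hard part of the paper's proof. The paper writes $H(b_*)=(r+j)\E\bigl[(Y-b_*)^{2j-1}(Y-b_*-c)\bigr]$ with $c=3jb_*/(r+j)$. It then integrates by parts against $G(x)=x^{2j}(x+b_*)^re^{-x}$. The choice of $b_*$ is exactly what makes $G'(x)=x^{2j-1}(x+b_*)^{r-1}e^{-x}(c-x)(x+d)$ with $d=2(r+j)/3$. This reduces positivity to comparing the two local maxima, $G(c)\le G(-d)$. After taking logarithms and dividing by $r$, that becomes an explicit scalar inequality $L(t)<0$ in $t=j/r$, proved by calculus with numerical bounds.

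Your Hölder fallback cannot replace this, because it is too lossy where $M_{2j-1}$ is small but positive. For $r=j=1$ and $z$ slightly above $1$, you have $M_1=z-1>0$ and $M_2=2z^2-2z+1\approx1$. The sufficient condition $M_{2j}^{1/(2j)}\ge 3j/(r+j)$ then reads roughly $1\ge 3/2$ and fails. No lower bound on $M_{2j}$ can repair this, since $M_2$ is known exactly there.
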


For $z>0$, this is the moment bound \eqref{eq:intro-moment-bound}
after rescaling with $b=1/z$.
The proof is given in \Cref{app:gamma-moment}.

The next proposition uses \Cref{lem:pge23-gamma-moment} with the same
Gamma shape parameter $r$ and, for each summand in
\eqref{eq:pge23-rho-centered}, its index $j$. For $0\le q\le1/3$
and $x\ge0$, we set
\[
  \delta=\frac12-q\ge\frac16,
  \qquad z=\frac{x}{\delta}\ge0,
  \qquad v=q+xY-\frac12=\delta(zY-1).
\]
Thus, the lemma bounds the moments of $v/\delta=zY-1$.
Using $\E\left[v^k\right]=\delta^k\E\left[(zY-1)^k\right]$, the following proposition proves that
\[
  \E\left[2(r+j)v^{2j}-jv^{2j-1}\right]\ge0,
  \qquad 1\le j\le(n-1)/2.
\]
Together with \eqref{eq:pge23-rho-centered}, this gives
the nonnegativity of
$\E\left[\rho_{n,m}(q+xY)\right]$ needed in
\eqref{eq:pge23-laplace-gamma}.

\begin{proposition}\label{prop:pge23-gamma-positive}
  Let $n\ge1$ be odd, let $r\ge1$ be an integer, and set $m=n+2r$.
  If $0\le q\le1/3$, then for every $x\ge0$,
  \begin{equation}\label{eq:pge23-shifted-positive}
    \E\left[\rho_{n,m}(q+xY)\right]\ge0,
    \qquad Y\sim\GammaLaw(r,1).
  \end{equation}
\end{proposition}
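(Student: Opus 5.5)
We will deduce \eqref{eq:pge23-shifted-positive} from the centred expansion in \Cref{lem:pge23-rho-centered} together with the moment bound in \Cref{lem:pge23-gamma-moment}. Put $\delta=\tfrac12-q\ge\tfrac16$ and $z=x/\delta\ge0$, and write $v=q+xY-\tfrac12=\delta(zY-1)$. Apply \eqref{eq:pge23-rho-centered} at this $v$ and take expectations; all moments of $Y$ are finite, so the expectation passes through the finite sum. This gives
\[
  n\,\E\bigl[\rho_{n,m}(q+xY)\bigr]
  =2ra_0+\sum_{j=1}^{(n-1)/2}a_j\Bigl(2(r+j)\E\bigl[v^{2j}\bigr]-j\,\E\bigl[v^{2j-1}\bigr]\Bigr).
\]
Every $a_j$ is positive and $2ra_0>0$. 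It therefore suffices to show that each bracket is nonnegative.

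Fix $1\le j\le(n-1)/2$ and write $M_k=\E[(zY-1)^k]$. Since $\E[v^k]=\delta^kM_k$, the bracket equals
\[
  \delta^{2j-1}\bigl(2(r+j)\delta M_{2j}-jM_{2j-1}\bigr).
\]
The factor $\delta^{2j-1}$ is positive, so we need $jM_{2j-1}\le2(r+j)\delta M_{2j}$. By \Cref{lem:pge23-gamma-moment}, $jM_{2j-1}\le\tfrac{r+j}{3}M_{2j}$. Because $M_{2j}\ge0$ as an even moment and $\delta\ge\tfrac16$ gives $2\delta\ge\tfrac13$, we get
\[
  \tfrac{r+j}{3}M_{2j}\le2(r+j)\delta M_{2j},
\]
which closes the chain. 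This is exactly the point where the hypothesis $q\le1/3$ enters. The constant $3$ in the moment lemma is tuned so that the threshold is $\delta=1/6$.

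The case $x=0$ is covered as well: then $z=0$ and $zY-1\equiv-1$, and the moment lemma still applies for $z\ge0$. One can also check it directly, since $M_{2j-1}=-1<0$.

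The substantive difficulty lies entirely in \Cref{lem:pge23-gamma-moment}, which the paper proves separately, so the proposition itself is routine bookkeeping. The only care needed is with signs. We must divide out $\delta^{2j-1}$ only after confirming $\delta>0$, and we must use $M_{2j}\ge0$ so that the inequality $2\delta\ge1/3$ can be multiplied through in the correct direction.
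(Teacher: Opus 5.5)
Your proof is correct and follows the paper's argument essentially line for line. You use the same substitution $\delta=\tfrac12-q$, $z=x/\delta$, $v=\delta(zY-1)$, take termwise expectations in \eqref{eq:pge23-rho-centered}, and apply \Cref{lem:pge23-gamma-moment} together with $2\delta\ge\tfrac13$ and $\E\bigl[(zY-1)^{2j}\bigr]\ge0$. The only cosmetic difference is that you factor out $\delta^{2j-1}$, whereas the paper writes the lower bound as $\delta^{2j}(r+j)\bigl(2-\tfrac{1}{3\delta}\bigr)\E\bigl[(zY-1)^{2j}\bigr]$.
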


\begin{proof}
  Let
  \[
    \delta=\frac12-q\ge\frac16,
    \qquad z=\frac{x}{\delta}\ge0,
    \qquad v=q+xY-\frac12=\delta(zY-1).
  \]
  After taking expectations in \eqref{eq:pge23-rho-centered}, it suffices to
  bound each summand.
  For $j\ge1$,
  \begin{align*}
    &\E\left[
    2(r+j)\bigl(\delta(zY-1)\bigr)^{2j}
    -j\bigl(\delta(zY-1)\bigr)^{2j-1}
    \right]\\
    &\qquad=2(r+j)\delta^{2j}\E\left[(zY-1)^{2j}\right]
      -j\delta^{2j-1}\E\left[(zY-1)^{2j-1}\right]\\
    &\qquad\ge2(r+j)\delta^{2j}\E\left[(zY-1)^{2j}\right]
      -\frac{r+j}{3}\delta^{2j-1}\E\left[(zY-1)^{2j}\right]\\
    &\qquad=\delta^{2j}(r+j)
    \left(2-\frac{1}{3\delta}\right)
    \E\left[(zY-1)^{2j}\right]
    \ge0,
  \end{align*}
  where the inequality follows from \Cref{lem:pge23-gamma-moment}. The last
  step uses $2\delta\ge1/3$ and $\E\left[(zY-1)^{2j}\right]\ge0$.
  Since every $a_j$ is positive, we conclude
  \[
    n\,\E\left[\rho_{n,m}(q+xY)\right]\ge2ra_0>0,
  \]
  which proves \eqref{eq:pge23-shifted-positive}.
\end{proof}

\begin{remark}\label{rem:pge23-threshold}
  For $p>1/2$, the condition $2-1/(3\delta)\ge0$, where
  $\delta=p-1/2$, is equivalent to $p\ge2/3$.
  This is the only step in the proof of \Cref{prop:pge23-gamma-positive}
  requiring $p\ge2/3$; for $1/2<p<2/3$, the lower bound
  \[
    \delta^{2j}(r+j)\left(2-\frac{1}{3\delta}\right)
    \E\left[(zY-1)^{2j}\right]
  \]
  for $\E\left[2(r+j)v^{2j}-jv^{2j-1}\right]$ is negative, so it does
  not establish the required nonnegativity.
\end{remark}

\medskip

\begin{proof}[Proof of the inequalities in \Cref{thm:main,thm:commonality}]
  First suppose that $W$ is a step graphon on a partition into intervals of equal length, and write $U=1-W$ and $q=1-p$.
  When $0\le q\le1/3$, \Cref{lem:pge23-dirichlet} reduces the nonnegativity of $P_{m,r}$ to
  $\widetilde P_{m,r}(q,\lambda)\ge0$ for $-1/2\le\lambda\le1/2$.
  This follows from \Cref{lem:pge23-ell-negative} when $\lambda\le0$.
  When $\lambda>0$, \Cref{lem:pge23-laplace-gamma} expresses
  $\widetilde P_{m,r}(q,\lambda)$ as an integral of expectations of the form
  $\E\left[\rho_{n,m}(q+xY)\right]$, which are nonnegative by
  \Cref{prop:pge23-gamma-positive}.
  Thus, $P_{m,r}$ is nonnegative on $[-1/2,1/2]^r$.
  For $m\in\{3,5,7\}$, \Cref{prop:short-cycles} gives the nonnegativity of $P_{m,r}$ throughout the larger range $0\le q\le1/2$.

  In both cases, the nonnegativity of $\mu$ gives $\Phi_m\ge0$, so \Cref{lem:polynomial-reduction} proves \eqref{eq:intro-commonality} for step graphons in the corresponding density ranges.
  The approximation argument used in \Cref{lem:step-reduction}, which preserves edge density, extends this inequality to arbitrary graphons.
  Finally, \eqref{eq:cycle-path-comparison} gives
  \[
    \tC mW\ge p^m+q^m-q^{m-1}=p^m-pq^{m-1},
  \]
  which is \eqref{eq:intro-main}.
\end{proof}

\section{Equality and qualitative stability}
\label{sec:equality-stability}

This section characterises equality in the odd-cycle Goodman bound \eqref{eq:intro-main}
by analysing the two inequalities used to derive it: the strengthened
extended commonality inequality \eqref{eq:intro-commonality} and the
comparison between cycle and path densities \eqref{eq:cycle-path-comparison}.
We first prove in \Cref{prop:regularity-gap} that equality in
\eqref{eq:intro-commonality} forces $W$ to have constant degree $p$.
We then show in \Cref{prop:main-equality} that, under this regularity condition,
equality in \eqref{eq:cycle-path-comparison} forces the complement $1-W$
to be a union of equal cliques.
Together, these conditions identify the balanced complete multipartite graphons as the only graphons attaining the bound when $2/3\le p<1$.
Finally, we use this uniqueness and graphon compactness to prove qualitative
stability at densities $p=1-1/k$, $k\ge3$, in \Cref{cor:stability}.

For a graphon $W$ of edge density $p$, define its degree function by
\[
  \dW(x)=\int_0^1 W(x,y)\dd y.
\]
We call $W$ \emph{$p$-regular} if $\dW(x)=p$ for almost every $x$.
With $U=1-W$ and $q=1-p$, define $\Psi_m(W)$ for any graphon $W$
as the difference between the left- and right-hand sides of
\eqref{eq:intro-commonality}:
\begin{equation}\label{eq:equality-gap}
  \Psi_m(W):=\tC mW+\tC mU-\tP{m-1}U-\bigl(p^m-pq^{m-1}\bigr).
\end{equation}
The gap in the odd-cycle bound splits as
\begin{equation}\label{eq:one-colour-gap}
  \tC mW-\bigl(p^m-pq^{m-1}\bigr)
  =\Psi_m(W)+\bigl[\tP{m-1}U-\tC mU\bigr].
\end{equation}
For $p\ge2/3$, both terms on the right are nonnegative: the first by
\eqref{eq:intro-commonality}, proved in \Cref{sec:excursion-positivity},
and the second by \eqref{eq:cycle-path-comparison}.
Thus, equality in \eqref{eq:intro-main} holds exactly when both terms vanish.
We first characterise the condition $\Psi_m(W)=0$ in
\Cref{prop:regularity-gap}, and then analyse equality in the cycle-path
comparison in \Cref{prop:main-equality}.

\subsection{Degree variance and regularity}
\label{subsec:degree-variance}

For a step graphon on a partition into intervals of equal length,
\Cref{lem:polynomial-reduction} gives
$\Psi_m(W)=\Phi_m$, where $\Phi_m$ is the sum of spectral integrals defined 
in \eqref{eq:Phi-expansion}.
Keeping the term $r=(m-1)/2$ in \eqref{eq:Phi-expansion} gives a degree-variance bound, which we extend to arbitrary graphons by approximation.

\begin{proposition}
\label{prop:regularity-gap}
  Let $m\ge3$ be odd, and let $W$ be a graphon of edge density $p\ge2/3$.
  Then,
  \begin{equation}\label{eq:degree-variance-gap}
    \Psi_m(W)\ge(m-1)
    \left(\int_0^1(\dW(x)-p)^2\dd x\right)^{(m-1)/2}.
  \end{equation}
  Moreover, $\Psi_m(W)=0$ if and only if $W$ is $p$-regular.
  Consequently, equality holds in \eqref{eq:intro-commonality} of
  \Cref{thm:commonality} if and only if $W$ is $p$-regular.
\end{proposition}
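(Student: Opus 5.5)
The plan is to prove \eqref{eq:degree-variance-gap} first for step graphons on equal-length interval partitions, then pass to general graphons by the averaging approximation of \Cref{lem:step-reduction}, and finally read off the equality characterisation.

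\textbf{Step graphons.} Fix such a step graphon $W$. By \Cref{lem:polynomial-reduction}, $\Psi_m(W)=\Phi_m$. Every $P_{m,r}$ with $1\le r\le(m-1)/2$ is nonnegative on $[-1/2,1/2]^r$ when $q\le1/3$, as proved in \Cref{sec:excursion-positivity}, and $\mu$ is a nonnegative measure supported there. Hence all summands of \eqref{eq:Phi-expansion} are nonnegative, and we may drop every term except $r=(m-1)/2$.

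For this $r$ we have $n=1$, so
\[
  P_{m,r}=\tfrac mr\bigl[(rp-\textstyle\sum_i\lambda_i)+(rq+\sum_i\lambda_i)\bigr]-1=m-1 .
\]
This constant matches the computations $P_{3,1}=2$, $P_{5,2}=4$ and $P_{7,3}=6$ in \Cref{sec:short-cycles}. The retained term is therefore $(m-1)\mu(\R)^{(m-1)/2}=(m-1)\norm g^{m-1}$ by \Cref{lem:spectral-moments}.

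It remains to identify $\norm g^2$. Let $d_U(a)=\frac1N\sum_bU_{ab}$ denote the value of $d_U$ on $I_a$. Then $(M_Ue)_a=d_U(a)/\sqrt N$, so $g_a=(d_U(a)-q)/\sqrt N$ and
\[
  \norm g^2=\frac1N\sum_a\bigl(d_U(a)-q\bigr)^2=\int_0^1\bigl(\dW(x)-p\bigr)^2\dd x,
\]
using $d_U=1-\dW$. This proves \eqref{eq:degree-variance-gap} for step graphons.

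\textbf{General graphons.} Let $W_N$ be the averages of $W$ over the dyadic grids, as in \Cref{lem:step-reduction}; these have edge density $p$. As shown in \Cref{app:step-reduction}, the cycle and path densities of $W_N$ and $1-W_N$ converge to those of $W$ and $1-W$, so $\Psi_m(W_N)\to\Psi_m(W)$.

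For the right-hand side, $d_{W_N}$ is the conditional expectation of $\dW$ given the $N$-interval partition. By martingale convergence (or density of step functions in $L^2$), $d_{W_N}\to\dW$ in $L^2$. Hence the degree variances converge, and passing to the limit in the step-graphon inequality gives \eqref{eq:degree-variance-gap} for $W$.

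\textbf{Equality.} If $\Psi_m(W)=0$, then \eqref{eq:degree-variance-gap} forces the degree variance to vanish, so $W$ is $p$-regular.

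Conversely, suppose $W$ is $p$-regular. Conditional expectation of the constant $p$ is $p$, so every $W_N$ is $p$-regular. Then $g=0$ for each $W_N$, so $\mu=0$ and $\Psi_m(W_N)=\Phi_m=0$. Letting $N\to\infty$ gives $\Psi_m(W)=0$.

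Since $\Psi_m(W)$ is exactly the difference between the two sides of \eqref{eq:intro-commonality}, this yields the stated equality case of \Cref{thm:commonality}.

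\textbf{Expected main obstacle.} The only delicate point is the approximation step: the convergence of densities under averaging and the $L^2$ convergence of the degree functions. I would invoke \Cref{app:step-reduction} for the densities and handle the degree functions directly via the conditional-expectation description above.
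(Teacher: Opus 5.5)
Your proof is correct. For the inequality \eqref{eq:degree-variance-gap} and for the implication that $\Psi_m(W)=0$ forces regularity, it matches the paper step for step. You keep only the term $r=(m-1)/2$, where $P_{m,r}\equiv m-1$, identify $\mu(\R)=\norm g^2$ with the degree variance, and approximate by dyadic averages.

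There are two differences. The first is minor: for convergence of the degree variances you use that $d_{W_N}$ is the conditional expectation of $\dW$ on the grid. The paper instead writes the variance as $\tP2{W_s}-p^2$ and reuses the density convergence from \Cref{app:step-reduction}. Both work.

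The second concerns the converse, that $p$-regularity gives $\Psi_m(W)=0$. You note that averaging preserves $p$-regularity, so each $W_N$ has $g=0$, hence $\mu=0$ and $\Psi_m(W_N)=\Phi_m=0$, and then pass to the limit. The paper works directly on $L^2[0,1]$. Since $\Tker W\one=p\one$, $\Tker U\one=q\one$, and $\Tker W+\Tker U$ is the projection onto constants, the two operators are negatives of each other on $\one^\perp$. The odd traces therefore cancel, giving $\tC mW+\tC mU=p^m+q^m$, and also $\tP{m-1}U=q^{m-1}$.

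Your route is more economical: it reuses the finite-matrix expansion and avoids the spectral theory of compact operators. The paper's route gives these identities exactly for every regular graphon without a limiting argument, and its operator viewpoint is the one continued in \Cref{prop:main-equality}. Neither version of the converse needs $p\ge2/3$.
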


\begin{proof}
  First suppose that $W$ is constant on each rectangle $I_a\times I_b$,
  where $I_1,\ldots,I_N$ partition $[0,1]$ into intervals of length $1/N$,
  as in \Cref{subsec:step-matrices}.
  \Cref{lem:polynomial-reduction} and the definition \eqref{eq:Phi-expansion} of $\Phi_m$ give
  \[
    \Psi_m(W)=\Phi_m
    =\sum_{r=1}^{(m-1)/2}
      \int\!\cdots\!\int
      P_{m,r}(q;\lambda_1,\ldots,\lambda_r)
      \prod_{i=1}^r\dd\mu(\lambda_i).
  \]
  The nonnegativity of $P_{m,r}$ in \eqref{eq:pge23-positivity-goal}, proved in
  \Cref{sec:excursion-positivity}, makes every integrand in
  this sum nonnegative on the support of $\mu$. Since $\mu$ is nonnegative,
  we obtain a lower bound by retaining only the summand with $r=(m-1)/2$.
  For this summand, $n=m-2r=1$, so \eqref{eq:Pmr} gives
  \begin{align*}
    P_{m,r}(q;\lambda_1,\ldots,\lambda_r)
    ={}&\frac mr\Bigl[
      h_1(\rep pr,-\lambda_1,\ldots,-\lambda_r)
      +h_1(\rep qr,\lambda_1,\ldots,\lambda_r)
    \Bigr]\\
    &-h_0(\rep q{(r+1)},\lambda_1,\ldots,\lambda_r)\\
    ={}&\frac mr\Bigl[
      \Bigl(rp-\sum_{i=1}^r\lambda_i\Bigr)
      +\Bigl(rq+\sum_{i=1}^r\lambda_i\Bigr)
    \Bigr]-1\\
    ={}&m-1.
  \end{align*}
  Here we used that $h_1$ is the sum of its variables, $h_0=1$, and $p+q=1$.
  Integrating the constant $m-1$ against the $r$-fold product of $\mu$ gives
  \[
    \Psi_m(W)\ge(m-1)\mu(\R)^{(m-1)/2}.
  \]
  To obtain \eqref{eq:degree-variance-gap} from this bound, we use the identity
  \[
    \mu(\R)=\norm g^2
    =\int_0^1(\dW(x)-p)^2\dd x.
  \]
  The first equality is given by \eqref{eq:moments}.
  To verify the second, let $d_a$ be the constant value of $\dW$
  on the partition interval $I_a$, so
  $d_a=\frac1N\sum_{b=1}^N W_{ab}$.
  The definitions $g=M_Ue-qe$, $e_a=1/\sqrt N$, and
  $(M_U)_{ab}=U_{ab}/N$ give
  \[
    g_a=\frac1{\sqrt N}\left(\frac1N\sum_{b=1}^N U_{ab}-q\right)
    =\frac{1-d_a-q}{\sqrt N}
    =\frac{p-d_a}{\sqrt N},
  \]
  where we used $U=1-W$ and $q=1-p$.
  Since each $I_a$ has length $1/N$, we obtain
  \[
    \norm g^2=\frac1N\sum_{a=1}^N(p-d_a)^2
    =\int_0^1(\dW(x)-p)^2\dd x,
  \]
  proving \eqref{eq:degree-variance-gap} for step graphons of this form.

  For an arbitrary $W$, let $W_s$ be the step-graphon approximations on
  partitions into intervals of equal length constructed in the proof of
  \Cref{lem:step-reduction}. Each $W_s$ has the same edge density $p$ as
  $W$, so the bound just proved gives
  \[
    \Psi_m(W_s)\ge(m-1)
    \left(\int_0^1(d_{W_s}(x)-p)^2\dd x\right)^{(m-1)/2}.
  \]
  All homomorphism densities of $W_s$ and $1-W_s$ converge to those of
  $W$ and $1-W$, respectively. By the definition \eqref{eq:equality-gap},
  the left-hand side therefore converges to $\Psi_m(W)$.
  For the right-hand side, express the degree variance through the
  two-edge path density:
  \begin{align*}
    \int_0^1(d_{W_s}(x)-p)^2\dd x
    =\Big(\tP2{W_s}-p^2\Big)
    \longrightarrow\Big(\tP2W-p^2\Big)
    =\int_0^1(\dW(x)-p)^2\dd x.
  \end{align*}
  Thus, the right-hand side also converges to the corresponding expression
  for $W$. Taking limits in the inequality proves
  \eqref{eq:degree-variance-gap} for arbitrary graphons.
  Moreover, if $\Psi_m(W)=0$, \eqref{eq:degree-variance-gap} forces $W$ to be $p$-regular.

  Conversely, suppose that $W$ is $p$-regular.
  The degree function of $U=1-W$ is then the constant $q$, so
  \[
    \Tker W\one=\dW=p\one,
    \qquad
    \Tker U\one=q\one .
  \]
  Both operators are self-adjoint, so each leaves the orthogonal complement
  $\one^\perp=\{f\in L^2[0,1]:\int_0^1f(x)\dd x=0\}$ invariant.
  On $\one^\perp$, the two operators are negatives of each other: the identity
  $W+U\equiv1$ gives
  \[
    \Tker Wf+\Tker Uf=\Bigl(\int_0^1f(y)\dd y\Bigr)\one=\ip f\one\one
    \qquad\text{for all }f\in L^2[0,1],
  \]
  which is the orthogonal projection onto the constants and therefore vanishes on
  $\one^\perp$.
  Write $\nu_1,\nu_2,\ldots$ for the eigenvalues of the restriction of $\Tker W$ to
  $\one^\perp$.
  The eigenvalues of $\Tker W$ are then $p$ together with the $\nu_j$, and those of
  $\Tker U$ are $q$ together with the $-\nu_j$.
  Since $m$ is odd, the terms $\nu_j^m$ and $(-\nu_j)^m$ cancel in pairs, so the
  trace formula gives
  \[
    \tC mW+\tC mU
    =\Bigl(p^m+\sum_j\nu_j^m\Bigr)+\Bigl(q^m+\sum_j(-\nu_j)^m\Bigr)
    =p^m+q^m.
  \]
  Iterating $\Tker U\one=q\one$ gives $\Tker U^{m-1}\one=q^{m-1}\one$, whence
  \[
    \tP{m-1}U=\ip\one{\Tker U^{m-1}\one}=q^{m-1}.
  \]
  Substituting both evaluations in \eqref{eq:equality-gap} and using $1-p=q$,
  \[
    \Psi_m(W)
    =\bigl(p^m+q^m\bigr)-q^{m-1}-\bigl(p^m-pq^{m-1}\bigr)
    =q^m-(1-p)q^{m-1}
    =0. \qedhere
  \]
\end{proof}

\subsection{Equality in the odd-cycle bound}
\label{subsec:main-equality}

By \eqref{eq:one-colour-gap} and \Cref{prop:regularity-gap}, equality in
\eqref{eq:intro-main} is equivalent to $W$ being $p$-regular and
$\tC mU=\tP{m-1}U$.
For $p<1$, the next proposition shows that these conditions force $U$ to
be a union of equal cliques, and hence $W$ to be a balanced complete
multipartite graphon.

\begin{proposition}
\label{prop:main-equality}
  Let $m\ge3$ be odd, and let $W$ be a graphon of edge density $2/3\le p<1$.
  Equality holds in \Cref{thm:main} if and only if $p=1-1/k$ for an integer $k\ge3$ and, up to a measure-preserving relabelling and null sets, $W=W_k$.
\end{proposition}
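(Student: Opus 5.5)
By \eqref{eq:one-colour-gap} and \Cref{prop:regularity-gap}, equality in \eqref{eq:intro-main} is equivalent to two conditions: $W$ is $p$-regular, and $\tC mU=\tP{m-1}U$ for $U=1-W$. The plan is to show that these two conditions force $U$ to be a disjoint union of equal cliques of measure $q=1-p$. The converse direction is immediate from \eqref{eq:intro-kpartite-value}.

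\textbf{Step 1: extract pointwise information from the cycle--path equality.} Write
\[
  \tP{m-1}U-\tC mU=\intcube{m}U(x_1,x_2)\cdots U(x_{m-1},x_m)\bigl(1-U(x_m,x_1)\bigr)\dd x_1\cdots\dd x_m.
\]
The integrand is nonnegative, so equality means that, almost everywhere, a nonzero walk weight $U(x_1,x_2)\cdots U(x_{m-1},x_m)>0$ forces $U(x_m,x_1)=1$. I would use this with $p$-regularity to show that $U$ is $\{0,1\}$-valued on $\supp U$.

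For $m=3$, take $x_1=x_3$ where possible. More robustly, use the operator version: by regularity, $\Tker U\one=q\one$. The function $F(x,y)$, defined as the density of $(m-1)$-walks from $x$ to $y$, satisfies $\int F(x,y)\dd y=q^{m-1}$. The equality condition says $\int F(x,y)(1-U(x,y))\dd y=0$ for almost every $x$.

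\textbf{Step 2: show that $U$ is an equivalence relation.} Set $S=\{F>0\}$, so that $U=1$ almost everywhere on $S$. Since $m-1$ is even, $F(x,x)$-type diagonal behaviour and the symmetry of $U$ should let me show that $\supp U\subseteq S$. The idea is that an edge $(x,y)$ with $U(x,y)>0$ extends to a walk of even length $m-1$ by going back and forth along $x,y,x,\ldots$, giving $F(x,y)\ge c>0$ in a measure-theoretic sense. I would handle this with Lebesgue density points.

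Hence $U$ is essentially $\{0,1\}$-valued, and $U(x,y)=1$ whenever $x,y$ are joined by an $(m-1)$-walk. Combining walks, reflexivity and transitivity follow. An $(m-1)$-walk from $x$ to $y$ and an edge $y\to z$ give an $m$-walk; padding by back-and-forth steps then gives reachability of $z$ from $x$ by $(m-1)$-walks within the same component. So $\{U=1\}$ is, up to null sets, an equivalence relation whose classes $C_i$ are cliques.

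\textbf{Step 3: conclude.} Regularity of $U$ gives $|C_i|=q$ for the class containing almost every $x$. Hence there are $k=1/q$ classes of equal measure, with $k$ an integer. Then $k\ge3$ because $p\ge2/3$, and $W=1-U$ is $W_k$ up to relabelling.

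The main obstacle is Step 2: turning the almost-everywhere walk condition into a genuine equivalence relation on classes. The difficulties are null sets and the fact that $U$ is a priori fractional. I would first try an operator approach instead. Regularity plus equality gives $\ip{\one}{\Tker U^{m-1}(1-U)\circ\cdots}$; more precisely, the equality says $\Tker U$ restricted to walks saturates. One could show that $\Tker U^2$ has the same range and support structure as $\Tker U$ and that $U^2=U$ pointwise.

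As a fallback, I would use the graph-theoretic support argument above with density points, and invoke the fact that a symmetric $\{0,1\}$ kernel whose support is transitive is a union of cliques.
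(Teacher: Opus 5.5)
Your reduction to the two conditions ($W$ is $p$-regular and $\tC mU=\tP{m-1}U$) is correct. So is your rewriting of the second condition as $F(1-U)=0$ a.e., with $F$ the kernel of $T_U^{m-1}$. The gap is in Step~2. You justify $\supp U\subseteq\{F>0\}$ by walking back and forth along the edge $xy$. But a back-and-forth walk of the even length $m-1$ starting at $x$ ends at $x$, not at $y$, so it gives no information about $F(x,y)$. The inclusion is also false as a general principle. For the complete bipartite graphon with two parts of measure $1/2$, every walk of even length returns to the part it started in, so $F=0$ on all of $\supp U$. Hence the inclusion, and with it the $\{0,1\}$-valuedness of $U$ (most of the content of the proposition), has to be extracted quantitatively from the equality hypothesis; Lebesgue density points cannot supply it. The same parity slip occurs in your transitivity argument. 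Padding an $m$-walk with back-and-forth steps only produces walks of odd length, never an $(m-1)$-walk. You would need to pad the $2$-walk $x,y,z$ instead, and then still make that measure-theoretic.

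The missing ingredient is a sharp upper bound coming from regularity. Your operator route points at the right identities, and the paper supplies the mechanism. Cauchy--Schwarz together with $q$-regularity gives $\norm{T_U}\le q$, hence
\[
  q^{m-1}=\tC mU=\sum_i\lambda_i^m\le q^{m-2}\sum_i\lambda_i^2=q^{m-2}\intsq U(x,y)^2\ddxy\le q^{m-1}.
\]
Equality throughout forces $U^2=U$ a.e.\ and every nonzero eigenvalue to equal $q$. So $T_U^2=qT_U$, i.e.\ $\int_0^1U(x,z)U(y,z)\dd z=qU(x,y)$ a.e. This makes neighbourhoods a.e.\ equal or disjoint, after which the cliques of measure $q$ are peeled off one at a time.

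Your walk picture can be repaired in the same spirit. Regularity gives $0\le F\le q^{m-2}$ and $\int_0^1F(x,y)\dd y=q^{m-1}$, so $S_x=\{y:F(x,y)>0\}$ has measure at least $q$. Since $U(x,\cdot)=1$ a.e.\ on $S_x$ and $d_U(x)=q$, this forces $U(x,\cdot)=\one_{S_x}$ and $F=q^{m-2}U$ a.e., that is, $T_U^{m-1}=q^{m-2}T_U$. Because $m-2$ is odd, the spectral theorem again yields $T_U^2=qT_U$. Some counting or norm step of this kind is indispensable; as written, Step~2 and everything built on it is unsupported.
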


\begin{proof}
  The backward implication follows from \eqref{eq:intro-kpartite-value}.
  For the forward implication, suppose that equality holds in
  \eqref{eq:intro-main}, and write $U=1-W$ and $q=1-p>0$.
  Then, \eqref{eq:one-colour-gap} and \Cref{prop:regularity-gap} imply
  that $W$ is $p$-regular and $\tC mU=\tP{m-1}U$.
  Since $d_U(x)=1-\dW(x)=q$ almost everywhere, $U$ is $q$-regular
  and $\Tker U\one=q\one$. Consequently,
  \[
    \tC mU=\tP{m-1}U
    =\ip\one{\Tker U^{m-1}\one}=q^{m-1}.
  \]

  We use this cycle density to determine the possible eigenvalues of
  $\Tker U$. We first show that its operator norm is at most $q$.
  Since $U\ge0$, we may split $U=U^{1/2}\cdot U^{1/2}$.
  For $f\in L^2[0,1]$, applying the Cauchy--Schwarz inequality in the
  variable $y$ gives
  \[
    \bigl|\Tker Uf(x)\bigr|^2
    =\left|\int_0^1 U(x,y)^{1/2}\cdot U(x,y)^{1/2}f(y)\dd y\right|^2
    \le\left(\int_0^1 U(x,y)\dd y\right)
      \left(\int_0^1 U(x,y)\bigl|f(y)\bigr|^2\dd y\right).
  \]
  The first factor is the degree $d_U(x)$. Since $U$ is $q$-regular,
  $d_U(x)=q$ for almost every $x$, giving
  \[
    \bigl|\Tker Uf(x)\bigr|^2\le q\int_0^1 U(x,y)\bigl|f(y)\bigr|^2\dd y .
  \]
  Integrating in $x$, exchanging the order of integration, and using $q$-regularity a
  second time, now in the variable $y$,
  \[
    \norm{\Tker Uf}_2^2
    \le q\int_0^1\bigl|f(y)\bigr|^2
      \left(\int_0^1 U(x,y)\dd x\right)\dd y
    =q^2\norm f_2^2 .
  \]
  This proves the operator-norm bound. Since $\Tker U$ is self-adjoint,
  its eigenvalues $\lambda_i$ lie in $[-q,q]$.
  Since $|\lambda_i|\le q$ and $m\ge3$, each eigenvalue satisfies
  \[
    \lambda_i^m\le|\lambda_i|^m
    =\lambda_i^2|\lambda_i|^{m-2}
    \le q^{m-2}\lambda_i^2.
  \]
  Since $U$ is square-integrable, its eigenvalues, counted with
  multiplicity, satisfy
  \[
    \sum_i\lambda_i^2=\intsq U(x,y)^2\ddxy.
  \]
  The trace formula and the pointwise inequality $U(x,y)^2\le U(x,y)$ therefore give
  \begin{equation}\label{eq:regular-complement-cycle}
    q^{m-1}=\sum_i\lambda_i^m
    \le q^{m-2}\sum_i\lambda_i^2
    =q^{m-2}\intsq U(x,y)^2\ddxy
    \le q^{m-1}.
  \end{equation}
  The two ends of \eqref{eq:regular-complement-cycle} agree, so equality
  holds in both inequalities. Equality in the last inequality forces
  $U(x,y)^2=U(x,y)$ for almost every $(x,y)$, so $U$ takes values in $\{0,1\}$.
  Equality in the eigenvalue inequality forces $|\lambda_i|=q$ for every
  nonzero eigenvalue. Since $m$ is odd, a negative eigenvalue would satisfy
  $\lambda_i^m<|\lambda_i|^m$, so every nonzero $\lambda_i$ equals $q$.
  The spectral theorem now gives
  \[
    \Tker U=qP,
    \qquad P=\text{the orthogonal projection onto }\ker(\Tker U-q).
  \]
  Since $P^2=P$, this gives $\Tker U^2=q^2P^2=q^2P=q\Tker U$.
  Square-integrable kernels defining the same integral operator agree
  almost everywhere. Comparing the kernels of $\Tker U^2$ and
  $q\Tker U$, and using the symmetry of $U$, therefore gives
  \begin{equation}\label{eq:equal-neighbourhoods}
    \int_0^1 U(x,z)U(y,z)\dd z=qU(x,y)
    \quad\text{for almost every }(x,y).
  \end{equation}

  Write $N(x)=\{y:U(x,y)=1\}$ and $|A|$ for the Lebesgue measure of $A\subseteq[0,1]$, so that $|N(x)|=q$ for almost every $x$.
  The left side of \eqref{eq:equal-neighbourhoods} is $|N(x)\cap N(y)|$ and $U$ is $\{0,1\}$-valued, so for almost every pair $(x,y)$,
  \[
    \bigl|N(x)\cap N(y)\bigr|
    =\begin{cases}
      q&\text{if }U(x,y)=1,\\
      0&\text{if }U(x,y)=0.
    \end{cases}
  \]
  For almost every pair $(x,y)$ with $U(x,y)=1$, both neighbourhoods
  and their intersection have measure $q$. Hence
  \[
    |N(x)\setminus N(y)|=|N(y)\setminus N(x)|
    =q-|N(x)\cap N(y)|=0.
  \]
  Thus, the two neighbourhoods differ only on a set of measure zero.
  For almost every pair with $U(x,y)=0$, their intersection has measure
  zero, so the neighbourhoods are disjoint apart from a set of measure zero.
  In the construction below, equality, containment, and disjointness of sets
  are understood up to null sets. Statements that $U=1$ or $U=0$ on a region mean that the
  asserted value holds for almost every pair of points in that region.

  We now partition $[0,1]$ into disjoint sets $B_1,\ldots,B_k$ of
  measure $q$, with $U=1$ within each set and $U=0$ between different
  sets. At stage $j$, we have chosen pairwise disjoint measurable sets
  $B_1,\ldots,B_j$, and the remaining set is
  \[
    R_j=[0,1]\setminus\bigcup_{i=1}^j B_i.
  \]
  We maintain the following invariant:
  \begin{enumerate}[label=(\roman*)]
    \item For each $i\le j$, $|B_i|=q$ and $N(y)=B_i$ for almost
      every $y\in B_i$. Thus, $B_i$ is a clique with no edges to its
      complement.
    \item For almost every $x\in R_j$, $|N(x)|=q$ and
      $|N(x)\setminus R_j|=0$. Thus, almost every point in the
      remainder still has all its neighbours in the remainder.
  \end{enumerate}
  Initially, $j=0$ and $R_0=[0,1]$. Condition (i) is vacuous, and
  condition (ii) follows from the $q$-regularity of $U$.

  Suppose the invariant holds at stage $j$ and $|R_j|>0$.
  By Fubini's theorem and the neighbourhood relation above, for almost
  every $x\in[0,1]$ we have
  \[
    U(x,y)=1\quad\Longrightarrow\quad N(y)=N(x)
    \qquad\text{for almost every }y\in[0,1].
  \]
  Since $R_j$ has positive measure, choose such an $x\in R_j$ also
  satisfying condition (ii): $|N(x)|=q$ and
  $|N(x)\setminus R_j|=0$. Set
  \[
    B_{j+1}=N(x)\cap R_j,
    \qquad R_{j+1}=R_j\setminus B_{j+1}.
  \]
  The new block is disjoint from the earlier blocks and has measure
  $q$. For almost every $y\in B_{j+1}$, we have
  $U(x,y)=1$, so the neighbourhood relation gives
  $N(y)=N(x)=B_{j+1}$. This verifies condition (i) for the new block;
  the earlier blocks are unchanged.

  To verify condition (ii), we must show that almost every point of
  $R_{j+1}$ still has a neighbourhood of measure $q$ contained in
  $R_{j+1}$. For almost every $z\in B_{j+1}$, the identity
  $N(z)=B_{j+1}$ shows that $z$ has no neighbours in $R_{j+1}$.
  Symmetry of $U$ and Fubini's theorem therefore give
  \[
    \begin{aligned}
      0&=\int_{B_{j+1}}\int_{R_{j+1}}U(z,y)\dd y\dd z\\
       &=\int_{R_{j+1}}\int_{B_{j+1}}U(y,z)\dd z\dd y
        =\int_{R_{j+1}}|N(y)\cap B_{j+1}|\dd y.
    \end{aligned}
  \]
  The last integrand is nonnegative, so $|N(y)\cap B_{j+1}|=0$
  for almost every $y\in R_{j+1}$. Together with condition (ii) at
  stage $j$, this gives
  \[
    |N(y)\setminus R_{j+1}|=|N(y)\setminus R_j|=0,
    \qquad |N(y)|=q
  \]
  for almost every $y\in R_{j+1}$. Thus, condition (ii) is preserved.

  Each step removes measure $q>0$, so $|R_j|=1-jq$ and at most
  $\lfloor1/q\rfloor$ steps are possible. The construction can continue
  whenever the remainder has positive measure, so it must end with
  $|R_k|=0$ for some $k$. Hence $B_1,\ldots,B_k$ cover $[0,1]$,
  and $kq=1$. By condition (i), $U$ consists of the
  $k$ disjoint cliques $B_i\times B_i$, so its complement is $W_k$
  after relabelling.
  Finally, $p=1-1/k\ge2/3$ implies $k\ge3$.
\end{proof}

At $p=1$, the only graphon is $W=1$ almost everywhere, which attains equality
in the odd-cycle Goodman bound \eqref{eq:intro-main} of \Cref{thm:main}.
This completes the equality assertion in \Cref{thm:main}.

\subsection{Qualitative stability}
\label{subsec:qualitative-stability}

The uniqueness just proved, together with compactness and continuity in cut distance (\Cref{subsec:graphon-preliminaries}), gives the following corollary.

\begin{corollary}[Qualitative stability]\label{cor:stability}
  Fix an odd integer $m\ge3$.
  For every integer $k\ge3$ and every $\varepsilon>0$, there exists $\delta>0$ such that every graphon $V$ satisfying
  \[
    \bigl|\edens V-(1-1/k)\bigr|\le\delta,
    \qquad
    \tC mV\le\tC m{W_k}+\delta
  \]
  satisfies $\delta_\square(V,W_k)<\varepsilon$.
\end{corollary}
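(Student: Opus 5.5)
I will argue by contradiction, combining compactness of the graphon space in cut distance with the uniqueness statement of \Cref{prop:main-equality}. Fix $m$, $k\ge3$ and $\varepsilon>0$, and suppose no suitable $\delta$ exists. Then for each $s\ge1$ there is a graphon $V_s$ with
\[
  \bigl|\edens{V_s}-(1-1/k)\bigr|\le1/s,
  \qquad
  \tC m{V_s}\le\tC m{W_k}+1/s,
  \qquad
  \delta_\square(V_s,W_k)\ge\varepsilon .
\]
By compactness of the space of graphons modulo equivalence (\Cref{subsec:graphon-preliminaries}), after passing to a subsequence there is a graphon $V$ with $\delta_\square(V_s,V)\to0$.

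The next step is to identify the limit. Edge and cycle densities are continuous in cut distance, so $\edens V=1-1/k=:p\ge2/3$ and $\tC mV\le\tC m{W_k}$. By \eqref{eq:intro-kpartite-value}, the right-hand side equals $p^m-p(1-p)^{m-1}$. On the other hand, \Cref{thm:main} (its inequality part, already proved for arbitrary graphons with $p\ge2/3$) gives $\tC mV\ge p^m-p(1-p)^{m-1}$. Hence equality holds. Since $2/3\le p<1$, \Cref{prop:main-equality} shows that $V=W_k$ up to measure-preserving relabelling and null sets. In particular $\delta_\square(V,W_k)=0$.

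Finally, the triangle inequality for the pseudometric $\delta_\square$ gives $\delta_\square(V_s,W_k)\le\delta_\square(V_s,V)+\delta_\square(V,W_k)\to0$, contradicting $\delta_\square(V_s,W_k)\ge\varepsilon$. The only point needing care is that the limit object in the compact space is a genuine graphon and that relabelling-equivalence implies zero cut distance. Both are standard: a measure-preserving relabelling is admissible in \eqref{eq:cut-distance}, and null-set modifications do not change cut norms. I do not expect a real obstacle, since the heavy lifting is in \Cref{prop:main-equality}.
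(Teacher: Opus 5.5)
Your proposal is correct and follows essentially the same route as the paper: you argue by contradiction, extract a cut-convergent subsequence by compactness, use continuity together with \Cref{thm:main} to force equality in the limit, and identify the limit as $W_k$ via \Cref{prop:main-equality} before concluding with the triangle inequality. Your remark that relabelling and null-set equivalence give zero cut distance makes explicit a step the paper treats as immediate.
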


\begin{proof}
  Fix an odd integer $m\ge3$ and an integer $k\ge3$, and set $p_k=1-1/k$.
  Suppose the conclusion fails for some $\varepsilon>0$.
  Taking $\delta=1/j$ for $j\in\mathbb N$ produces graphons $W^{(j)}$ with
  \[
    \bigl|\edens{W^{(j)}}-p_k\bigr|\le\frac1j,
    \qquad
    \tC m{W^{(j)}}\le p_k^m-p_k(1-p_k)^{m-1}+\frac1j,
    \qquad
    \delta_\square(W^{(j)},W_k)\ge\varepsilon .
  \]
  By compactness in cut distance (\Cref{subsec:graphon-preliminaries}), after passing to a subsequence, we may assume that $W^{(j)}$ converges in cut distance to a graphon $W$.
  Edge and cycle densities are continuous in cut distance, so the first two conditions pass to the limit as
  \[
    \edens W=p_k,
    \qquad
    \tC mW\le p_k^m-p_k(1-p_k)^{m-1}.
  \]
  Since $k\ge3$ gives $p_k\ge2/3$, \Cref{thm:main} applies to $W$ and supplies the reverse inequality, so $\tC mW=p_k^m-p_k(1-p_k)^{m-1}$; that is, equality holds in \Cref{thm:main}.
  \Cref{prop:main-equality} then makes $W$ equivalent to $W_k$.
  The triangle inequality gives
  \[
    \begin{aligned}
      \delta_\square(W^{(j)},W_k)
      &\le\delta_\square(W^{(j)},W)+\delta_\square(W,W_k)\\
      &=\delta_\square(W^{(j)},W)\longrightarrow0.
    \end{aligned}
  \]
  This contradicts $\delta_\square(W^{(j)},W_k)\ge\varepsilon>0$
  for every $j$.
\end{proof}

\section{Concluding remarks and further questions}
\label{sec:conclusion}

We have extended Goodman's bound to every odd cycle for graphons of edge
density $p\ge2/3$ (\Cref{thm:main}).
At each density $p=1-1/k$ with $k\ge3$, the balanced complete
$k$-partite graphon $W_k$ is the unique minimiser up to graphon equivalence
and satisfies qualitative stability in cut distance (\Cref{cor:stability}).

The restriction $p\ge2/3$ enters when we apply the odd-to-even Gamma
moment bound to prove the nonnegativity of
$\E\rho_{n,m}(q+xY)$ in \Cref{prop:pge23-gamma-positive};
\Cref{rem:pge23-threshold} identifies the coefficient that requires this
restriction. The strengthened extended commonality inequality can fail
when $1/2<p<2/3$, so the general argument does not extend directly to
that interval. For example, let $W$ be constant on the products of two
parts of measure $1/2$, with value matrix
\[
  \frac1{250}\begin{pmatrix}50&193\\193&65\end{pmatrix}.
\]
Then, $p=501/1000$, and the matrix formulas in
\Cref{subsec:step-matrices} give
\[
  \Psi_9(W)=-\frac{227031617768589}{5\cdot10^{21}}<0.
\]
Thus, this graphon violates \eqref{eq:intro-commonality} for $m=9$.
For $C_3$, $C_5$, and $C_7$, the direct computations in
\Cref{prop:short-cycles} prove both main inequalities throughout $p\ge1/2$.
The companion paper proves the odd-cycle Goodman bound
\eqref{eq:intro-main} for all odd cycle lengths on $1/2<p<2/3$ by different
methods.

A natural next question is quantitative stability. The degree-variance
estimate \eqref{eq:degree-variance-gap} already controls the variation in
degrees through the gap $\Psi_m(W)$ in
\eqref{eq:intro-commonality}. The proof of \Cref{cor:stability} gives no
explicit rate for the cut distance. At a fixed edge density $p=1-1/k$,
$k\ge3$, one can ask for an explicit bound on
$\delta_\square(W,W_k)$ in terms of the cycle-density excess
$\tC mW-\tC m{W_k}$.

The balanced complete multipartite graphons also suggest polynomial
bounds for other nonbipartite graphs. For such a graph $H$, let
$\chi_H$ be its chromatic polynomial, let
$\chi(H)$ be its chromatic number, and let $v(H)$ be its number of vertices.
Define
\[
  G_H(p):=\chi_H\left(\frac{1}{1-p}\right)(1-p)^{v(H)}.
\]
Since $\chi_H$ has degree $v(H)$, substituting $1/(1-p)$ produces terms
whose denominators are $(1-p)^j$ with $0\le j\le v(H)$.
The factor $(1-p)^{v(H)}$ therefore cancels every denominator, leaving
a polynomial in $p$. At $p=1$, we evaluate this polynomial.
At $p=1-1/k$, the number of proper $k$-colourings of $H$ gives
\[
  G_H(1-1/k)=\frac{\chi_H(k)}{k^{v(H)}}=\dens H{W_k}.
\]
Thus, $G_H$ is the unique polynomial in $p$ agreeing with these
multipartite densities for every integer $k\ge\chi(H)-1$.
For odd $m$, this construction recovers the polynomial studied here:
\[
  G_{C_m}(p)=p^m-p(1-p)^{m-1}.
\]
We say that $H$ satisfies its \emph{Goodman-type bound} if every graphon
$W$ with $\edens W=p\ge1-1/(\chi(H)-1)$ satisfies
\[
  \dens HW\ge G_H(p).
\]
For such a graph $H$, every $W_k$ with $k\ge\chi(H)-1$ attains the
bound and hence minimises the $H$-density at its edge density.

This leads to a broader question: which nonbipartite graphs satisfy their
Goodman-type bounds? The following examples and obstruction give some
directions for this classification; their proofs lie beyond the scope of
this paper.
\begin{itemize}
  \item A nonbipartite chordal graph whose maximal cliques all have the
  same size satisfies its Goodman-type bound.
  \item For integers $r\ge1$ and $k\ge1$, the complete join
  $K_r\vee C_{2k+1}$, i.e., the graph obtained by taking the disjoint union of $K_r$ and $C_{2k+1}$ and adding all edges between them, satisfies its Goodman-type bound.  Taking $r=1$ gives the odd wheels.
  \item A nonbipartite graph of even girth does not satisfy its
  Goodman-type bound. 
\end{itemize}
Characterising the graphs that satisfy their Goodman-type bounds would
explain when the values attained by balanced complete multipartite graphons
yield a polynomial lower bound valid also at the densities between those
points of equality.

\paragraph{AI use disclosure.}
The problem studied here arose from the neural graphon representation
framework of Kim, Baek, Lee, and Yang~\cite{kim2026implicit}.
The authors guided the initial phase of proof development, which lasted
about a month and involved preliminary computations and interactive proof
search with GPT-5.5, GPT-5.5 Pro, and Claude Fable 5.0.
Over the next three months, the authors worked with these models,
GPT-6.0 Astra, and Claude Opus 5.5 to simplify the proof and improve its exposition.
These efforts led to the development of the excursion-based proof,
a revision of the argument following \Cref{lem:pge23-ell-negative} using
\Cref{lem:pge23-gamma-moment}, and the equality and stability results.
The authors independently verified the written proof.

The Lean formalisation of the proof was carried out by Claude Fable 5.1
and Claude Opus 4.8, with only minimal input from the authors.
The authors checked the Lean formalisation and confirmed that its definitions
and theorem statements matched those in the paper.
GPT-5.6 Sol, GPT-6.0 Astra, Claude Opus 4.8, and Claude Opus 5.5 also assisted with drafting and revising
the manuscript.

\paragraph{Lean formalisation.}
The proofs of \Cref{thm:main,thm:commonality,prop:main-equality} are formalised in Lean 4, while the stability assertion in \Cref{cor:stability} is not formalised.
The formalisation is available at \url{https://github.com/mekty2012/OddCycleGoodman-lean}, and this repository also contains a proof of \Cref{thm:main} for $1/2<p<2/3$, which is not described in this paper.

There are some deviations between the formalisation and the paper.
First, instead of reducing to step graphons and their matrices (\Cref{lem:step-reduction}), we work with integral operators in the formalisation.
The measure $\mu$ is replaced by a finitely supported measure with the moments $g^{\mathsf T}A^jg$ for $j\le m$, which is all the argument uses; this measure comes from the spectral theorem applied to the compression of $A$ to the Krylov subspace spanned by $g,Ag,\ldots,A^mg$.
Second, we prove \Cref{lem:pge23-beta} from a Beta representation of $h_j(a^{\times s},b^{\times t})$ for all $s,t\ge1$, obtained by binomial expansion and termwise Beta integrals; the case $(s,t)=(r+1,r)$ gives the term $h_{n-1}(q^{\times(r+1)},\lambda^{\times r})$.
Third, in the proof of \Cref{lem:pge23-gamma-moment}, $H'$ is computed directly from the moment recurrence, and we prove $L(t)<0$ by splitting $(0,\infty)$ into a few intervals and using tangent-line bounds for the logarithm on each.
Finally, we prove \Cref{prop:main-equality} by applying $q$-regularity and the Cauchy--Schwarz inequality to the cycle densities $\tC nU$, viewed as Hilbert--Schmidt inner products of powers of $\Tker U$; the equality cases give $U^2=U$ almost everywhere and \eqref{eq:equal-neighbourhoods} directly.

\paragraph{Acknowledgements.}
We would like to thank Jineon Baek, Gyeongwon Jeong, Jang Soo Kim, Joonkyung Lee,
Sangho Lim, and Seonghun Park for helpful discussions.
S. Im was supported by a KIAS Individual Grant (AP109501) at the Korea Institute for Advanced Study.
T. Kim and H. Yang were supported by the National Research Foundation of Korea (NRF)
grant funded by the Korean Government (MSIT) (No.~RS-2023-00279680).

\appendix
\crefalias{section}{appendix}

\section{Proof of the step-graphon reduction}
\label{app:step-reduction}

\begin{proof}[Proof of \Cref{lem:step-reduction}]
  For an integer $s\ge1$, set $N=2^s$. Let $I_1,\ldots,I_N$ be the
  intervals of length $1/N$, and
  define
  \[
    W_s(x,y)=N^2\int_{I_a\times I_b}W(u,v)\dd u\dd v
    \qquad ((x,y)\in I_a\times I_b).
  \]
  Each $W_s$ is a symmetric graphon with exactly the same edge density as
  $W$:
  \[
    \int_{[0,1]^2}W_s
    =\sum_{a,b=1}^N\int_{I_a\times I_b}W=p.
  \]
  Moreover,
  \begin{equation}\label{eq:step-L1}
    \norm{W_s-W}_1\longrightarrow0
    \qquad(s\to\infty).
  \end{equation}
  To verify the convergence, let $S$ be a function constant on the
  rectangles obtained by dividing each coordinate interval into
  $2^{s_0}$ equal parts. For $s\ge s_0$, each averaging cell lies in
  one of these rectangles, so its average of $S$ equals $S$ itself.
  On each cell, the absolute value of the average of $W-S$ is at most
  the average of $|W-S|$. Summing over the cells gives
  $\norm{W_s-S}_1\le\norm{W-S}_1$, and hence
  \[
    \norm{W_s-W}_1
    \le \norm{W_s-S}_1+\norm{S-W}_1
    \le2\norm{W-S}_1.
  \]
  Dyadic step functions are dense in $L^1([0,1]^2)$, proving
  \eqref{eq:step-L1}.

  If $F$ is a finite simple graph, subtract the two products defining
  $\dens F{W_s}$ and $\dens FW$ by replacing one edge factor at a time.
  Each resulting term contains one factor $W_s-W$; all other factors
  lie in $[0,1]$. Integrating its absolute value bounds that term by
  $\norm{W_s-W}_1$. There are $|E(F)|$ terms, so
  \[
    \left|\dens F{W_s}-\dens FW\right|
    \le |E(F)|\norm{W_s-W}_1.
  \]
  The same bound holds for $U_s=1-W_s$ and $U=1-W$. Hence all cycle and
  path densities in \eqref{eq:intro-main} and
  \eqref{eq:intro-commonality} converge, while $p$ and $q$
  remain unchanged.
  Applying the two inequalities to each $W_s$ and passing to the limit
  proves the lemma.
\end{proof}

\section{Proof of the odd-to-even Gamma moment bound}
\label{app:gamma-moment}

It remains to prove \Cref{lem:pge23-gamma-moment}. Let $r,j\ge1$ be
integers, let $Y\sim\GammaLaw(r,1)$, and let $z\ge0$. We must show that
\begin{equation}
  3j\,\E(zY-1)^{2j-1}
  \le(r+j)\,\E(zY-1)^{2j}. \label{eq:pge23-gamma-moment2}
\end{equation}

\begin{proof}[Proof of \Cref{lem:pge23-gamma-moment}]
  The case $z=0$ is immediate: the left-hand side equals $-3j<0$, while
  the right-hand side equals $r+j>0$.

  Now assume $z>0$, write $b=1/z$, and define
  \[
    F(b)=\E(Y-b)^{2j},
    \quad\text{so that}\quad
    F'(b)=-2j\,\E(Y-b)^{2j-1},
    \quad
    F''(b)=2j(2j-1)\,\E(Y-b)^{2j-2}.
  \]
  Since $zY-1=z(Y-b)$, dividing \eqref{eq:pge23-gamma-moment2} by
  $z^{2j}>0$ gives the equivalent inequality
  \begin{equation}\label{eq:pge23-gamma-H}
    H(b):=(r+j)F(b)+\frac{3b}{2}F'(b)\ge0.
  \end{equation}

  We first derive a differential equation for $F$ by integration by
  parts against the Gamma density. This equation shows that any maximal interval
  on which $H$ is negative must contain an explicit point $b_*$.
  At that point, another integration by parts reduces positivity to a
  comparison of two local maxima. We complete this comparison with a
  scalar bound that is uniform in $r$ and $j$.

  \smallskip
  \noindent\emph{The differential equation.}

  For every polynomial $f$, integration by parts against the density of
  $Y\sim\GammaLaw(r,1)$ gives the identity below; the boundary term
  $y^re^{-y}f(y)$ vanishes both at $0$ and as $y\to\infty$:
  \begin{equation*}
    \E\bigl[Yf'(Y)\bigr]=\E\bigl[(Y-r)f(Y)\bigr].
  \end{equation*}
  For an integer $k\ge1$, apply this identity to $f(y)=(y-b)^k$
  and write $I_\ell(b)=\E(Y-b)^\ell$ for integers $\ell\ge0$.
  The identity becomes
  \[
    kI_k(b)+kbI_{k-1}(b)=I_{k+1}(b)+(b-r)I_k(b),
  \]
  so rearranging gives the moment recurrence
  \begin{equation*}
    I_{k+1}(b)=(r+k-b)I_k(b)+kbI_{k-1}(b).
  \end{equation*}
  Taking $k=2j-1$ and using $I_{k+1}=F$,
  $I_k=-F'/(2j)$, and $I_{k-1}=F''/(2j(2j-1))$ gives
  \begin{equation}\label{eq:pge23-gamma-ODE}
    bF''(b)+(b-r-2j+1)F'(b)-2jF(b)=0.
  \end{equation}

  \smallskip
  \noindent\emph{Reduction to a single value of $b$.}
  First observe that
  \[
    H(0)=(r+j)\E Y^{2j}>0,
    \qquad
    H(b)=(r+4j)b^{2j}+O(b^{2j-1})
    \qquad(b\to\infty),
  \]
  so $H$ is positive at both ends of $[0,\infty)$.
  If $H$ were negative somewhere, the maximal interval of negativity containing that point
  would have a first zero, where $H'\le0$, and a later zero, where
  $H'\ge0$.

  To locate these endpoints, we next determine the sign of $H'$ at a zero of $H$.
  Differentiating \eqref{eq:pge23-gamma-H} and then eliminating $F''$ by \eqref{eq:pge23-gamma-ODE}, we obtain
  \[
    H'(b)=\left(r+j+\frac{3}{2}\right)F'(b)+\frac{3b}{2}F''(b)
    =3jF(b)+\left(\frac{5r}{2}+4j-\frac{3b}{2}\right)F'(b).
  \]
  Let $b_0>0$ be a zero of $H$, so that $H(b_0)=0$. By \eqref{eq:pge23-gamma-H}, we have
  \[
    F'(b_0)=-\frac{2(r+j)}{3b_0}F(b_0).
  \]
  Define
  \begin{equation*}
    b_*:=\frac{(r+j)(5r+8j)}{3(r+4j)}.
  \end{equation*}
  Substituting the expression for $F'(b_0)$ into the formula for $H'$
  gives
  \begin{equation}\label{eq:pge23-gamma-crossing}
    \frac{H'(b_0)}{F(b_0)}
    =(r+4j)\frac{b_0-b_*}{b_0}.
  \end{equation}
  Since $F(b_0)=\E(Y-b_0)^{2j}>0$, we distinguish three cases using
  \eqref{eq:pge23-gamma-crossing}.
  \begin{enumerate}[label=(\roman*)]
    \item If $b_0<b_*$, then $H'(b_0)<0$.
    \item If $b_0>b_*$, then $H'(b_0)>0$.
    \item If $b_0=b_*$, then $H(b_*)=H(b_0)=0$ and $H'(b_*)=0$.
    Differentiating the formula for $H'$ above and using
    \eqref{eq:pge23-gamma-ODE} gives
    \[
      H''(b_*)=\frac{r+4j}{b_*}F(b_*)>0.
    \]
    Thus $b_*$ is a strict local minimum of $H$ and cannot be an endpoint
    of an interval on which $H$ is negative.
  \end{enumerate}
  The first endpoint of the interval of negativity would therefore lie below $b_*$
  and the second above it,
  so this interval would contain $b_*$ and give $H(b_*)<0$.
  To rule out this possibility, it remains only to prove
  \begin{equation}\label{eq:pge23-gamma-Hbstar}
    H(b_*)\ge0.
  \end{equation}

  \smallskip
  \noindent\emph{An integral representation at $b_*$.}
  We first write $H(b_*)$ as an integral.
  Since $F'(b)=-2j\E(Y-b)^{2j-1}$, \eqref{eq:pge23-gamma-H} gives
  \[
    H(b_*)=(r+j)\E(Y-b_*)^{2j}-3jb_*\E(Y-b_*)^{2j-1}
    =(r+j)\,\E\bigl[(Y-b_*)^{2j-1}(Y-b_*-c)\bigr],
  \]
  where $c=\frac{3jb_*}{r+j}$.
  Expanding the expectation as an integral and substituting $x=y-b_*$,
  \begin{equation}\label{eq:pge23-gamma-H-integral}
    H(b_*) =\frac{r+j}{\Gamma(r) e^{b_*}}\int_{-b_*}^{\infty}
    x^{2j-1}(x-c)(x+b_*)^{r-1}e^{-x}\dd x.
  \end{equation}

  The integrand changes sign, so this representation does not immediately
  establish positivity. We therefore seek a more useful representation by
  integration by parts. To obtain the factor
  $x^{2j-1}(x+b_*)^{r-1}e^{-x}$ upon differentiation, set
  \begin{equation*}
    G(x)=x^{2j}(x+b_*)^re^{-x},
    \qquad -b_*\le x<\infty,
  \end{equation*}
  so that
  \[
    G'(x)=x^{2j-1}(x+b_*)^{r-1}e^{-x}
    \bigl(-x^2+(r+2j-b_*)x+2jb_*\bigr).
  \]
  The choice of $b_*$ makes $c=3jb_*/(r+j)$ a root of this quadratic.
  Thus, the factor $x-c$ in the signed integrand of
  \eqref{eq:pge23-gamma-H-integral} also appears in $G'$, up to sign.
  More precisely, the quadratic factors as $(c-x)(x+d)$, where
  \[
    d=\frac{2jb_*}{c}=\frac{2(r+j)}{3}.
  \]
  A direct calculation gives
  \[
    b_*-d=\frac{r(r+j)}{r+4j}>0,
  \]
  so $-d$ lies in the interval of integration.
  Thus
  \begin{equation}\label{eq:pge23-gamma-Gprime}
    G'(x)=x^{2j-1}(x+b_*)^{r-1}e^{-x}(c-x)(x+d),
  \end{equation}
  and the integrand of \eqref{eq:pge23-gamma-H-integral} is $-G'(x)/(x+d)$, whence
  \[
    \frac{\Gamma(r)e^{b_*}}{r+j}H(b_*)
    =-\int_{-b_*}^{\infty}\frac{G'(x)}{x+d}\dd x.
  \]
  The factor $x+d$ in \eqref{eq:pge23-gamma-Gprime} cancels the denominator,
  so $G'(x)/(x+d)$ extends continuously to $x=-d$.
  To integrate by parts across this point, we use $G(x)-G(-d)$ as a
  primitive of $G'(x)$. Since $G'(-d)=0$, Taylor's theorem gives
  $G(x)-G(-d)=O((x+d)^2)$ as $x\to-d$. Thus
  $(G(x)-G(-d))/(x+d)$ tends to zero, while
  $(G(x)-G(-d))/(x+d)^2$ extends continuously to $x=-d$.

  Integration by parts on either side of $-d$ therefore yields
  \begin{align}
    &-\int_{-b_*}^{\infty}\frac{G'(x)}{x+d}\dd x\notag\\
    &\quad=\left[-\frac{G(x)-G(-d)}{x+d}\right]_{-b_*}^{-d}
    +\left[-\frac{G(x)-G(-d)}{x+d}\right]_{-d}^{\infty}
    +\int_{-b_*}^{\infty}\frac{G(-d)-G(x)}{(x+d)^2}\dd x\notag\\
    &\quad=\frac{G(-d)}{b_*-d}
    +\int_{-b_*}^{\infty}\frac{G(-d)-G(x)}{(x+d)^2}\dd x,
    \label{eq:pge23-gamma-ibp-max}
  \end{align}
  where the evaluations at $-d$ are understood by continuity, and we used
  $G(-b_*)=0$ and $G(x)\to0$ as $x\to\infty$.

  Since $G(-d)=d^{2j}(b_*-d)^re^{d}>0$ and $b_*-d>0$, the term
  $G(-d)/(b_*-d)$ is positive. If $G(x)\le G(-d)$ for every
  $x\ge-b_*$, the integral is nonnegative as well, giving $H(b_*)>0$.
  It therefore suffices to prove that $G(x)\le G(-d)$ for every $x\ge-b_*$.

  \smallskip
  \noindent\emph{Comparison of the two local maxima.}
  By \eqref{eq:pge23-gamma-Gprime}, the two nonzero local maxima occur at
  $-d$ and $c$.
  Their ratio is
  \begin{equation}\label{eq:pge23-gamma-max-ratio}
    \frac{G(c)}{G(-d)}
    =\left(\frac {c}{d}\right)^{2j}
    \left(\frac{b_*+c}{b_*-d}\right)^r e^{-(c+d)}.
  \end{equation}
  Let $t=j/r>0$, and set
  \[
    A(t)=\frac{3t(5+8t)}{2(1+t)(1+4t)},
    \qquad
    B(t)=\frac{(1+4t)(5+8t)}{3(1+t)}.
  \]
  The logarithm of \eqref{eq:pge23-gamma-max-ratio}, divided by $r$, is
  \[
    L(t)=2t\log A(t)+\log B(t)
      -\frac{32t^2+25t+2}{3(1+4t)}.
  \]
  After division by $r$, the parameters $r$ and $j$ occur only through
  their ratio $t$. Proving $L(t)<0$ for every $t>0$ therefore establishes
  the maximum comparison uniformly in $r$ and $j$.

  \smallskip
  \noindent\emph{Proof of the scalar bound $L(t)<0$.}
  Direct differentiation gives
  \begin{align}
    L'(t)
    &=2\log A(t)
    -\frac{(32t^2+16t-7)(32t^2+25t+2)}
    {3(t+1)(4t+1)^2(8t+5)},\label{eq:pge23-gamma-Lprime}\\
    L''(t)
    &=-\frac{(32t^2+25t+2)
    (128t^4+224t^3+120t^2-28t-25)}
    {t(t+1)^2(4t+1)^3(8t+5)^2}.
    \label{eq:pge23-gamma-Lsecond}
  \end{align}
  In \eqref{eq:pge23-gamma-Lsecond}, the denominator and the factor
  $32t^2+25t+2$ are positive for $t>0$. The initial minus sign therefore
  makes the sign of $L''$ opposite to that of the quartic
  $P(t)=128t^4+224t^3+120t^2-28t-25$.
  The coefficients of $P$ change sign once, so by Descartes' rule of signs $P$ has at most one positive root.
  Since $P(0)=-25<0$ and $P(t)\to\infty$, it has exactly one positive
  root $t_0$. Consequently, $L'$ increases on $(0,t_0)$ and decreases on
  $(t_0,\infty)$.

  We now determine the sign of $L'$ near $0$, at $1$, and at $3/2$
  to locate its two zeros.
  We will use the elementary bounds $\log2<7/10$ and
  $\log13<13/5$. The positive terms in the exponential series give
  \[
    e^{7/10}>\sum_{\ell=0}^{3}\frac{(7/10)^\ell}{\ell!}>2,
    \qquad
    e^{13/5}>\sum_{\ell=0}^{6}\frac{(13/5)^\ell}{\ell!}>13,
  \]
  which prove both bounds by taking logarithms.
  As $t\downarrow0$ the argument $A(t)$ tends to $0$ while the rational term of \eqref{eq:pge23-gamma-Lprime} tends to $-14/15$, so $L'(t)\to-\infty$.
  At $t=1$ we have $A(1)=39/20$ and the rational term equals $2419/1950$, so the bound
  \[
    \log x\ge\frac{2(x-1)}{x+1}\qquad(x\ge1)
  \]
  gives $2\log A(1)\ge\frac{76}{59}$ and
  \[
    L'(1)\ge\frac{76}{59}-\frac{2419}{1950}
    =\frac{5479}{115050}>0.
  \]
  At $t=3/2$ we have $A(3/2)=153/70$ and the rational term equals $19847/12495$; concavity of the logarithm at $x=2$ gives
  \[
    \log\frac{153}{70}\le\log2+\frac{1}{2}\left(\frac{153}{70}-2\right)
    =\log2+\frac{13}{140},
  \]
  so with $2 \log 2 < 7/5$,
  \[
    L'\left(\frac{3}{2}\right)
    <\frac{7}{5} + \frac{13}{70}-\frac{19847}{12495}
    =-\frac{67}{24990}<0.
  \]

  Since $L'$ first increases and then decreases, these signs show that it
  has exactly two roots: some $t_1\in(0,1)$ with $L''(t_1)>0$, and some
  $t_2\in(1,3/2)$ with $L''(t_2)<0$.
  Thus, $L$ decreases on $(0,t_1)$, increases on $(t_1,t_2)$, and
  decreases on $(t_2,\infty)$. Its supremum over $(0,\infty)$ is therefore
  the larger of $L(t_2)$ and its limit at $0$.
  The latter is negative,
  \[
    \lim_{t\downarrow0}L(t)=\log\frac{5}{3}-\frac{2}{3}<0,
  \]
  and $t_2$ lies in $[1,3/2]$, so it remains to control $L$ on $[1,3/2]$.
  By concavity of $\log$,
  \[
    \log A\le\log2+\frac{A-2}{2},
    \qquad
    \log B\le\log13+\frac{B-13}{13}.
  \]
  Using $\log2<7/10$ and $\log13<13/5$, exact simplification gives
  \begin{equation}\label{eq:pge23-gamma-L-rational}
    L(t)<
    \frac{3(96t^3-191t^2-16t+46)}
    {130(t+1)(4t+1)}.
  \end{equation}
  On $[1,3/2]$ we have $96t-191<0$ and $t^2\ge1$, so the cubic numerator is at most $(96t-191)-16t+46=80t-145\le-25$, and $L(t)<0$ there as well.

  Together with the preceding description of the supremum, this proves
  $L(t)<0$ for every $t>0$.

  \smallskip
  \noindent\emph{Completion of the moment bound.}
  The ratio \eqref{eq:pge23-gamma-max-ratio} is therefore smaller than $1$,
  so $G$ attains its maximum at $-d$. Thus, the integrand in
  \eqref{eq:pge23-gamma-ibp-max} is nonnegative.
  The right-hand side of \eqref{eq:pge23-gamma-ibp-max} is then positive,
  giving $H(b_*)>0$. The earlier analysis of the zeros of $H$ shows that
  if $H$ were negative anywhere, then $H(b_*)<0$. Thus, $H(b)\ge0$ for
  every $b>0$, which is \eqref{eq:pge23-gamma-H} and hence implies
  \eqref{eq:pge23-gamma-moment2}.
\end{proof}

\bibliographystyle{abbrv}
\bibliography{references}

\end{document}